\newcommand\hidefigure[1]{#1}
\newtheorem{definition}{Definition}
\newtheorem{theorem}{Theorem}
\newtheorem{lemma}{Lemma}
\newtheorem{corollary}{Corollary}
\newtheorem{proposition}{Proposition}
\newtheorem{remark} {Remark}
\newtheorem{example} {Example}
\newtheorem{problem} {Problem}
\newcommand {\bC} {\mathbb {C}}
\newcommand {\bN} {\mathbb {N}} 
\newcommand {\bQ} {\mathbb {Q}}
\newcommand {\bR} {\mathbb {R}}
\newcommand {\bY} {\mathbb {Y}}
\newcommand {\bZ} {\mathbb {Z}}
\newcommand{\F} {\mathcal{F}}
\newcommand{\R} {\mathcal{R}}
\newcommand{\Fl}{\mathcal{F}\ell}
\font\germ=eufm10
\newcommand{\Sch} {\hbox{\germ S}}
\newcommand{\Schur} {s}
\newcommand{\Schurp} {{\bf \germ s}}
\newcommand{\BSch} {\overleftarrow{\hbox{\germ S}}}
\newcommand{\RC}{\mathcal{RC}}
\newcommand{\Diag} {\mathcal{Y}}
\newcommand{\VDiag} {\bQ\Diag}
\newcommand{\mydiff} {\xi}
\newcommand{\PermutN}{S_{\bN}} 
\newcommand{\PermutZ}{S_{\bZ}}
\newcommand{\shift}{\tau}
\newcommand{\mybinom}[2]{\Bigl(\begin{array}{@{}c@{}}#1\\#2\end{array}\Bigr)}
\newcommand{\Inter}{\scalebox{0.5}{\tikz{
\draw[step=1cm,gray,very thin] (0,1) grid (1,2);
\draw[line width=0.5mm] (0.5,1) -- (0.5,2);
\draw[line width=0.5mm] (0,1.5) -- (1,1.5);
\draw [fill=red] (0.5,1.5) circle [radius=0.1];}}}
\newcommand{\NoInter}{\scalebox{0.5}{\tikz{
\draw[step=1cm,gray,very thin] (3,1) grid (4,2);
\draw[line width=0.5mm] (3,1.5) -- (3.5,2);
\draw[line width=0.5mm] (3.5,1) -- (4,1.5);}}}
\newcommand{\Comment}[1]{}
\newcommand{\WTW}[1]{}
\newcommand{\nextsection}{\medskip}
\newcommand{\nextsubsection}{\smallskip}
\newcommand{\CiteYes}{}
\begin{document}

          \numberwithin{equation}{section}

 \title[Differential operators on Schur and Schubert polynomials]
          {Differential operators on Schur and Schubert polynomials}

\author[Gleb Nenashev]{Gleb Nenashev}
\address{
Department of Mathematics, Massachusetts Institute of Technology,
77 Massachusetts Avenue
Cambridge, MA 02139, US}
\email{nenashev@mit.edu}
\thanks{This research is supported by the Knut and Alice Wallenberg Foundation (KAW2017.0394)}
\subjclass[2010]{05E05 and 14M15} 
\keywords{Schubert polynomials, Schur functions, Structure constants, Bosonic operators,  Reduced decompositions}

\begin{abstract}
This paper deals with decreasing operators on back stable Schubert polynomials. We study two operators $\xi$ and $\nabla$ of degree $-1$, which satisfy the Leibniz rule. Furthermore, we show that all other such operators are linear combinations of $\xi$ and $\nabla$. 

For the case of Schur functions, these two operators  fully determine the product of Schur functions, i.e., it is possible to define the Littlewood-Richardson coefficients only from $\xi$ and $\nabla$. This new point of view on Schur functions gives us an elementary proof of the Giambelli identity and of Jacobi-Trudi identities.

For the case of Schubert polynomials, we construct a bigger class of decreasing operators as expressions in terms of $\xi$ and $\nabla$, which are indexed by Young diagrams. Surprisingly, these operators are related to Stanley symmetric functions. In particular, we extend bosonic operators from Schur to Schubert polynomials.
\end{abstract}

\maketitle

\setcounter{tocdepth}{1}
\tableofcontents

\section{Introduction}
  
Schubert calculus is a branch of algebraic geometry introduced in the nineteenth century by Herman Schubert, in order to solve various counting problems of enumerative geometry.   
Some of the key objects of the theory are polynomials in many variables of a specific form, called Schubert polynomials. 
  Schubert polynomials were defined by A.\,Lascoux and M.\,P.\,Sch\"utzenberg in~1982~\cite{LS1,LS2}, see also the book~\cite{Mac}. Prior to their works it was almost considered by I.\,N.\,Bernstein, I.\,M.\,Gelfand, and S.\,I.\,Gelfand~\cite{BGG} and M.\,Demazure~\cite{Dem}, where they gave a description of the cohomology ring of the complete flag variety~$\Fl_n$.
These polynomials represent cohomology classes of Schubert cycles in flag varieties.
Schubert polynomials have been actively studied for the last 30 years. The famous Schur functions are a specific case of Schubert polynomials: they correspond to the so-called Grassmannian permutations.

In this paper we study decreasing operators. The main two operators are $\mydiff$ and $\nabla$, they have degree $-1$. These two operators are well-defined for Schur polynomials and for generalizations of Schubert polynomials (the so-called back stable Schubert polynomials). Below in the introduction, we wrote formulas for these operators.

For the case of Schur polynomials, these operators are related to Kerov's operators (see some properties of ``down'' operator $\mydiff +z \nabla$ in~\cite{Oko}). For the case of Schubert polynomials the operator $\nabla$ was studied in~\cite{HPSW,St-nabla}. The main result of our paper is that these two operators determine the product when both factors are Schur, and almost determines it when one factor is Schur and the other Schubert. In particular, differential operators provide a new elementary proof of the Giambelli's formula and both Jacobi-Trudi formulas for Schur polynomials. They present one more connection between Murnaghan-Nakayama rule and characters of symmetric group, and some other properties. For the case of Schubert polynomials, these two operators give a new algorithm for the product, which is asymptotically the fastest. We will express bosonic operators in terms of  $\mydiff$ and $\nabla$ for Schur polynomials, which extends the notion to Schubert polynomials. We also construct more decreasing operators indexed by Young diagrams.

\nextsubsection
\subsection{Operations on Young Diagrams} 
Let $\Diag$ be  the set of Young diagrams (partitions), i.e., $\lambda=(\lambda_1,\ldots, \lambda_k)\in\Diag$ if and only if $\lambda_i$ are non-negative integers and are weakly decreasing. 
Consider the vector space $\VDiag$ consisting of formal finite sums of $\Diag$ with rational coefficients, i.e.,

$$\VDiag:=\left\{\sum_{i=1}^k a_i\lambda^{(i)}:\ k\in \bN,\ a_i\in \bQ,\ \lambda^{(i)}\in \Diag \right\}.$$
We denote by $\VDiag_n$ the subspace generated by diagrams with exactly $n$ boxes. We also assume the empty diagram  belongs to $\Diag$; it will be denoted by unit $1$, i.e., $\VDiag_0=\bQ$.

Now we present two linear operators $\mydiff$ and $\nabla$ on $\VDiag$. Namely, for a Young diagram $\lambda\in \Diag$, we have
$$\mydiff(\lambda)=\sum_{\substack{(i,j)\in \bN^2\\ \lambda'=\lambda-(i,j)\in \Diag}} \lambda';$$
and
$$\nabla(\lambda)=\sum_{\substack{(i,j)\in \bN^2\\ \lambda'=\lambda-(i,j)\in \Diag}} (j-i)\lambda',$$
i.e.,  both summations are taken over all diagrams after deleting one box (see the example below). 
Since both operators $\mydiff$ and $\nabla$ decrease the number of boxes by one, we will call them differential operators. For the box $(i,j),$ the value $j-i$ is called the content. Note that, for the empty diagram, we have $\mydiff(1)=\nabla(1)=0$; we will thus say that the coefficient of the empty diagram is a constant term.

\begin{example} For the partition $(4,3,1)$ we have the following identities:
  \ytableausetup{smalltableaux}
$$\mydiff\left(\ydiagram{4,3,1}\right)=\ydiagram{3,3,1}+\ydiagram{4,2,1}+\ydiagram{4,3,0}$$
and
$$\nabla\left(\ytableaushort{0123,{ \text{-}1}01,{ \text{-}2}}\right)=3\cdot \ytableaushort{012,{ \text{-}1}01,{ \text{-}2}}+1\cdot \ytableaushort{0123,{ \text{-}1}0,{ \text{-}2}} -2 \cdot \ytableaushort{0123,{ \text{-}1}01,\none}$$
(we put the content inside the boxes).
\end{example}

We have the following key lemma:
\begin{lemma} 
\label{lemma:key} An element from $\VDiag$ is constant if and only if both operators evaluated at it give zero, i.e., $$x\in \bQ\ \ \Longleftrightarrow \  \ \mydiff(x)=\nabla(x)= 0.$$
\end{lemma}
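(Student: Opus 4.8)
The forward implication is immediate: a constant is a scalar multiple of the empty diagram $1$, and $\mydiff(1)=\nabla(1)=0$ by definition, so the real content is the direction $\Leftarrow$. Since $\mydiff$ and $\nabla$ both lower the number of boxes by one, they act on each graded piece $\VDiag_n$ separately; writing $x=\sum_n x_n$ with $x_n\in\VDiag_n$, the hypothesis $\mydiff(x)=\nabla(x)=0$ forces $\mydiff(x_n)=\nabla(x_n)=0$ for every $n$. Thus it suffices to prove that $(\mydiff,\nabla)\colon\VDiag_n\to\VDiag_{n-1}\oplus\VDiag_{n-1}$ is injective for each $n\ge1$, i.e. that the only element of positive degree annihilated by both operators is $0$.

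The plan is to transport the problem to the ring of symmetric functions, where the two operators become transparent. I identify $\VDiag$ with $\Lambda_{\bQ}=\bQ[p_1,p_2,p_3,\dots]$ by sending a diagram $\lambda$ to the Schur function $s_\lambda$; this is a linear isomorphism carrying $\VDiag_n$ onto the degree-$n$ part and the constants onto $\bQ$. Under it, $\mydiff$ is the box-removal (branching) operator, which is exactly skewing by $p_1$, so $\mydiff=\partial/\partial p_1$; in particular $\mydiff$ is a derivation with $\mydiff(p_k)=\delta_{k,1}$. For $\nabla$ I would first compute its effect on power sums via the hook expansion $p_k=\sum_{r=0}^{k-1}(-1)^r s_{(k-r,1^r)}$: each hook has removable corners of contents $k-1-r$ and $-r$, so $\nabla s_{(k-r,1^r)}=(k-1-r)s_{(k-r-1,1^r)}-r\,s_{(k-r,1^{r-1})}$, and summing with signs the coefficient of each smaller hook collapses to $(-1)^r k$, giving $\nabla(p_k)=k\,p_{k-1}$ for $k\ge2$ and $\nabla(p_1)=0$. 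Using that $\nabla$ satisfies the Leibniz rule, it is therefore the derivation $\nabla=\sum_{k\ge2}k\,p_{k-1}\,\partial/\partial p_k$ of $\Lambda_{\bQ}$.

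With the operators in this form the statement reduces to an elementary peeling argument. The equation $\mydiff(x)=\partial x/\partial p_1=0$ says exactly that $x\in\bQ[p_2,p_3,\dots]$. I then prove by induction on $m\ge1$ that $\partial x/\partial p_m=0$, i.e. $x\in\bQ[p_{m+1},p_{m+2},\dots]$, the base case being what was just observed. Assuming $x\in\bQ[p_{m+1},p_{m+2},\dots]$, I expand $\nabla(x)=\sum_{k\ge2}k\,p_{k-1}\,\partial x/\partial p_k$: the terms with $k\le m$ vanish by hypothesis, the single term $k=m+1$ equals $(m+1)\,p_m\,\partial x/\partial p_{m+1}$ and is linear in $p_m$, while every term with $k\ge m+2$ lies in $\bQ[p_{m+1},p_{m+2},\dots]$ and is free of $p_m$. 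Extracting the part of $\nabla(x)=0$ that is linear in $p_m$ yields $\partial x/\partial p_{m+1}=0$, completing the induction; hence $x$ is free of every $p_m$, i.e. $x\in\bQ$.

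The only substantive work is the middle step: pinning down the action of $\nabla$ on the algebra generators $p_k$ and confirming that both operators are derivations of $\Lambda_{\bQ}$ (the Leibniz rule). Once that structural input is in hand, the final induction is purely formal. If one wishes to avoid the symmetric-function dictionary, the same claim is the injectivity of $(\mydiff,\nabla)$ on $\VDiag_n$, encoded by the corner relations $\sum_c c_{\mu^{+c}}=0$ and $\sum_c\operatorname{cont}(c)\,c_{\mu^{+c}}=0$, one pair for every $\mu$ with $n-1$ boxes. There a naive ``lex-largest partition'' argument is delicate, since a fixed $\mu$ can have many addable corners and each pair of equations is underdetermined; this is precisely why reformulating $\mydiff$ as $\partial/\partial p_1$ and reading $\nabla$ as the index-raising derivation is the decisive simplification.
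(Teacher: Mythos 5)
Your proof is correct, but it takes a genuinely different route from the paper's. The paper proves the lemma via Proposition~\ref{alg:prop}: a self-contained combinatorial recovery algorithm that looks at the lexicographically largest diagram $\mu$ appearing in $\mydiff(X)$, notes that (thanks to positivity of the coefficients) $\mu$ can only come from the two diagrams $\mu'$, $\mu''$ obtained by appending a box below or at the end of the last nonempty row, uses the fact that these contribute $\mu$ to $\nabla(X)$ with \emph{distinct} content coefficients to pin down both multiplicities, and then subtracts and recurses; the general case follows by splitting $X$ into its positive and negative parts. You instead transport everything to $\Lambda_{\bQ}=\bQ[p_1,p_2,\dots]$, identify $\mydiff$ with $\partial/\partial p_1$ and $\nabla$ with the derivation $\sum_{k\ge2}k\,p_{k-1}\,\partial/\partial p_k$ (your hook computation of $\nabla(p_k)=k\,p_{k-1}$ is exactly the paper's later Proposition~\ref{prop:xinabla}, derived independently), and then run a clean variable-elimination induction. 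Each approach buys something: yours handles arbitrary rational coefficients directly, is conceptually transparent, and anticipates the bosonic-operator picture of \S4.4; the paper's is more elementary --- it needs neither the identification of $\VDiag$ with $\Lambda$ nor, crucially, the Leibniz rule for $\nabla$, which in this paper is imported from the Schubert-polynomial shift argument of \S3 and which your proof must take as structural input --- and its positivity-exploiting algorithm is reused verbatim in \S6 to compute Schur-times-Schubert products. Two small remarks: you should say explicitly that the Leibniz rule for $\nabla$ is being quoted from \S3 (it is available by the time the lemma is proved, so there is no circularity, but without it your determination of $\nabla$ from its values on the $p_k$ collapses); and your closing comment slightly mischaracterizes the alternative --- the paper's lex-largest argument is not run on the raw corner relations but on $\mydiff(X)$ under a positivity hypothesis, which is precisely what cuts the preimage set down to two candidates and makes the peeling work.
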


We say that $f: \Diag^2\to \Diag$ is the multiplication map if
\begin{itemize}
\item $f$ satisfies the distributive property
\item for $n,m\in \bN$ and $x\in \VDiag_n,y\in \VDiag_m,$ $f(x,y)\in \VDiag_{(n+m)}$;
\item for $a,b\in \bQ,$ $f(a,b)=ab$;
\item for any $x,y\in \VDiag$, $\mydiff(f(x,y))=f(\mydiff(x),y)+f(x,\mydiff(y))$;
\item for any $x,y\in \VDiag$, $\nabla (f(y,x))=f(\nabla (x),y)+f(x,\nabla (y))$.
\end{itemize}
The last two properties say that $f$ satisfies the Leibniz rule for $\mydiff$ and $\nabla$, i.e., they are differential operators. The above properties are sufficient to define the multiplication map, since we already have the following easy corollary from the key lemma:

\begin{corollary}
There is at most one multiplication map.	
\end{corollary}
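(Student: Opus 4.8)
The plan is to show that the difference of two multiplication maps is forced to vanish, using Lemma~\ref{lemma:key} together with a degree induction. Suppose $f_1$ and $f_2$ both satisfy the five defining properties, and set $g:=f_1-f_2$. Since all five conditions are linear in $f$, the map $g$ is again bilinear, respects the grading in the sense that $g(x,y)\in\VDiag_{n+m}$ whenever $x\in\VDiag_n$ and $y\in\VDiag_m$, vanishes on scalars (as $g(a,b)=ab-ab=0$), and satisfies both Leibniz rules $\mydiff(g(x,y))=g(\mydiff(x),y)+g(x,\mydiff(y))$ and $\nabla(g(y,x))=g(\nabla(x),y)+g(x,\nabla(y))$. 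The goal is to prove $g=0$; by bilinearity it suffices to check $g(\lambda,\mu)=0$ for all Young diagrams $\lambda,\mu\in\Diag$.

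I would argue by strong induction on the total degree $N=n+m$, where $\lambda$ has $n$ boxes and $\mu$ has $m$ boxes. The base case $N=0$ is exactly the scalar condition. For the inductive step, fix $x\in\VDiag_n$ and $y\in\VDiag_m$ with $n+m=N\ge 1$, and assume $g$ vanishes on every pair whose total degree is strictly less than $N$. Applying the $\mydiff$-Leibniz rule gives $\mydiff(g(x,y))=g(\mydiff(x),y)+g(x,\mydiff(y))$; since $\mydiff$ lowers the number of boxes by one, each summand on the right is $g$ evaluated on a pair of total degree $N-1$, hence vanishes by the inductive hypothesis, so $\mydiff(g(x,y))=0$. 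The second operator is treated the same way, but here one must be attentive to the asymmetric form of its Leibniz rule: $\nabla(g(y,x))=g(\nabla(x),y)+g(x,\nabla(y))=0$, and because $x,y$ range freely over $\VDiag_n$ and $\VDiag_m$, interchanging their roles yields $\nabla(g(x,y))=0$ as well.

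At this point $g(x,y)$ is annihilated by both $\mydiff$ and $\nabla$, so Lemma~\ref{lemma:key} forces $g(x,y)$ to be a constant, i.e.\ $g(x,y)\in\bQ=\VDiag_0$. On the other hand the grading property places $g(x,y)\in\VDiag_N$ with $N\ge 1$, and $\VDiag_N\cap\VDiag_0=\{0\}$, so $g(x,y)=0$. This completes the induction, giving $g\equiv 0$ and hence $f_1=f_2$, which is precisely the uniqueness asserted.

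The computation is largely routine once this framework is in place; the only point that requires genuine care is the bookkeeping around the twisted $\nabla$-Leibniz rule, where the arguments of $g$ are swapped relative to the $\mydiff$ rule — one must symmetrize correctly to extract $\nabla(g(x,y))=0$ rather than merely $\nabla(g(y,x))=0$. Everything else reduces to a direct application of Lemma~\ref{lemma:key}, which is the statement doing the real work: it is what upgrades ``$g(x,y)$ is killed by both operators'' into ``$g(x,y)$ is a constant,'' after which the grading eliminates that constant.
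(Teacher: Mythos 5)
Your argument is correct and is exactly the one the paper intends: the paper states this as an ``easy corollary from the key lemma'' without writing out details, and the intended details are precisely your induction on total degree, using both Leibniz rules to kill $\mydiff$ and $\nabla$ of the difference of two multiplication maps, then invoking Lemma~\ref{lemma:key} and the grading to conclude the difference vanishes. Your handling of the swapped arguments in the $\nabla$-Leibniz rule is also the right reading of the paper's asymmetric formulation.
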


Moreover, such a multiplication map does exist:

\begin{theorem}
There is a unique multiplication map. 

Furthermore, it is commutative and associative; it is given by:
$$\lambda * \mu =\sum_{\nu} c_{\lambda,\mu}^{\nu} \nu,$$
where the $c_{\lambda,\mu}^{\nu}$ are the Littlewood-Richardson coefficients.
\end{theorem}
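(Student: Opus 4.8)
The plan is to transport the whole problem to the ring of symmetric functions and to recognize $\mydiff$ and $\nabla$ as derivations there. First I would fix the graded linear isomorphism $\Phi\colon \VDiag\to\Lambda_{\bQ}$ sending a diagram $\lambda$ to the Schur function $s_\lambda$ and the empty diagram to $1$; it carries $\VDiag_n$ onto the degree-$n$ component. Define the candidate multiplication by $\lambda*\mu:=\Phi^{-1}(s_\lambda s_\mu)$, extended bilinearly. By the classical definition of the Littlewood--Richardson coefficients, $s_\lambda s_\mu=\sum_\nu c^\nu_{\lambda,\mu}s_\nu$, so $\lambda*\mu=\sum_\nu c^\nu_{\lambda,\mu}\nu$, which is the asserted formula. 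Distributivity is built in, the grading property is the fact that $\Lambda_n\cdot\Lambda_m\subseteq\Lambda_{n+m}$, the scalar condition $f(a,b)=ab$ is the statement that $1=s_\varnothing$ is the unit, and commutativity and associativity are inherited from the commutative associative ring $\Lambda_{\bQ}$. Since the Corollary already gives uniqueness, the entire content of the theorem is the verification of the two Leibniz properties for this product.

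Next I would observe that, under $\Phi$, the two Leibniz properties say exactly that $\mydiff$ and $\nabla$ are derivations of $\Lambda_{\bQ}$. (For $\nabla$ the argument swap in the fifth axiom is harmless once commutativity is known: $\nabla(y*x)=\nabla(x*y)$, so the axiom reduces to $\nabla(x*y)=(\nabla x)*y+x*(\nabla y)$.) For $\mydiff$ this is standard: by Pieri's rule, adding a single box is multiplication by $s_{(1)}=p_1$, so deleting a single box in all ways is its Hall-adjoint, $\mydiff=p_1^{\perp}=\partial/\partial p_1$; as a partial derivative on $\Lambda_{\bQ}=\bQ[p_1,p_2,\dots]$ it is a derivation. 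Equivalently, $p_1$ is primitive for the coproduct, and the adjoint of multiplication by a primitive element is always a derivation.

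The heart of the proof is that $\nabla$ is a derivation. Unlike $\mydiff$, the operator $\nabla$ is not the adjoint of multiplication by any fixed symmetric function (content weighting is not degree-$1$ multiplication), so I would instead identify it with the explicit first-order operator $D:=\sum_{k\ge 2}k\,p_{k-1}\,\partial/\partial p_k$, which is manifestly a derivation of $\bQ[p_1,p_2,\dots]$. Both $\nabla$ and $D$ are degree-$(-1)$ linear maps, so it suffices to check $\nabla=D$ on a basis. On power sums I would use the hook expansion $p_k=\sum_{r=0}^{k-1}(-1)^r s_{(k-r,1^r)}$: applying content-weighted box removal to each hook and collecting terms, the content of the removed arm box and of the removed leg box combine to give the coefficient $k$, yielding $\nabla(p_k)=k\,p_{k-1}=D(p_k)$. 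To promote this from generators to an operator identity I would compare the two maps on the Schur basis directly: using the Murnaghan--Nakayama rule, $D(s_\lambda)=\sum_{k\ge 2}p_{k-1}\bigl(p_k^{\perp}s_\lambda\bigr)$ is a signed sum over removing a $k$-ribbon and re-adding a $(k-1)$-ribbon, and the claim is that this telescopes to $\sum_{\lambda'\lessdot\lambda}c(\lambda/\lambda')\,s_{\lambda'}$, i.e.\ to single-box removal weighted by content.

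I expect this last border-strip bookkeeping to be the main obstacle: it is the combinatorial identity expressing that the content derivation coincides with $\sum_{k\ge2}k\,p_{k-1}\,\partial/\partial p_k$, and it must be handled carefully because of the height signs. An alternative that avoids double-ribbon cancellations is to prove that $\nabla$ is a derivation intrinsically: the set $S=\{f:\nabla(fg)=(\nabla f)g+f(\nabla g)\ \text{for all }g\}$ is a subalgebra of $\Lambda_{\bQ}$, so it is enough to show the algebra generators $h_i$ lie in $S$, i.e.\ that $[\nabla,\,m_{h_i}]=m_{\nabla(h_i)}$, which is a single horizontal-strip (Pieri) computation rather than a double-ribbon one. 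Either way, once $\nabla$ is known to be a derivation, both Leibniz properties hold, $*$ satisfies all five defining conditions, and by the Corollary it is the unique multiplication map; commutativity and associativity are automatic from $\Lambda_{\bQ}$.
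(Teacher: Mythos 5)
Your overall strategy is sound and genuinely different from the paper's. You stay entirely inside $\Lambda_{\bQ}=\bQ[p_1,p_2,\dots]$ and try to exhibit $\mydiff$ and $\nabla$ as explicit derivations, $\mydiff=\partial/\partial p_1$ and $\nabla=\sum_{k\ge2}kp_{k-1}\,\partial/\partial p_k$; both identities are in fact true, and this would give a self-contained symmetric-function proof. The paper instead obtains the Leibniz property of $\nabla$ by descent from Schubert polynomials: it invokes Stanley and Hamaker--Pechenik--Speyer--Weigandt for the fact that $\nabla$ is a derivation on ordinary Schubert polynomials, transports this to back stable Schubert polynomials by conjugating with the shift $\shift^k$ (so that $\nabla+k\mydiff$, hence $\nabla$, satisfies Leibniz), and then restricts to Grassmannian permutations, where $\mydiff$ and $\nabla$ become box removal and content-weighted box removal (Theorem~\ref{operators:to:schur}); uniqueness and the identification of the structure constants with $c^{\nu}_{\lambda,\mu}$ then follow from the key lemma exactly as you say. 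Your route buys independence from the Schubert-polynomial literature; the paper's buys a short proof at the cost of an external input.

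That said, there is a gap at the step you yourself flag as the main obstacle: the operator identity $\nabla=\sum_{k\ge2}kp_{k-1}\,\partial/\partial p_k$. Verifying $\nabla(p_k)=kp_{k-1}$ on the generators does not suffice, because $\nabla$ is given only as a linear operator, not yet known to be a derivation --- and being a derivation is precisely what you are trying to prove, so agreement on generators cannot be promoted to an operator identity without further work. Neither of your two proposed completions is carried out: the Murnaghan--Nakayama route requires a signed cancellation over pairs (remove a $k$-ribbon, re-add a $(k-1)$-ribbon) that is at least as delicate as the border-strip bookkeeping the paper performs for $\rho^{(k)}$ in \S4.4, and the alternative route requires actually proving the commutator identity $[\nabla,m_{h_i}]=m_{(i-1)h_{i-1}}$, a Pieri-rule computation you only name. (Your observation that $S=\{f:\nabla(fg)=(\nabla f)g+f(\nabla g)\ \text{for all }g\}$ is a subalgebra is correct, and that second route is probably the cleanest way to close the gap.) Everything else --- distributivity, grading, the unit axiom, commutativity, associativity, the Littlewood--Richardson formula, the reduction of the swapped fifth axiom to the ordinary Leibniz rule, and uniqueness via the Corollary --- is handled correctly.
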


  The theorem above says that the differential operators $\mydiff$ and $\nabla$ determine the Littlewood-Richardson coefficients. 
This means that we can work with Schur functions without ``functions/polynomials'' and without the cohomology ring, which is a new point of view on them. 
We will denote this ring by $\bY$. 
In particular, the differential operators provide a new elementary proof of the Giambelli's formula and both Jacobi-Trudi formulas for Schur polynomials.

These two operators came from studying Schubert polynomials, which are generalizations of Schur functions.   
\nextsubsection
\subsection{Schubert polynomials} 

It is easy to define Schubert polynomials recursively using the divided differences operators
$$\partial_i f: =\frac{f-s_if}{x_i-x_{i+1}}.$$

\begin{definition} For a permutation $w_0=(n,n-1,\ldots,1)\in S_n$, we define its Schubert polynomial as
$$\Sch_{w_0}=x_1^{n-1}x_2^{n-1}\cdots x_{n-1}^{1}\in \bR[x_1,x_2,\ldots].$$
For each permutation $w\in S_n$, its Schubert polynomial is given by 
$$\Sch_w=\partial_{i_1}\cdots \partial_{i_{k}} \Sch_{w_0},$$
where $s_{i_1}\ldots s_{i_k}=w^{-1}w_0$ is a reduced decomposition of $w^{-1}w^0,$ i.e., 
$k=\ell(w^{-1}w_0)=\mybinom{n}{2}-\ell(w)$.
\end{definition}

This polynomials are well defined, i.e., they are independent of the choice of a reduced decomposition. Even more is true;
define $\PermutN$ as the set of all permutations of $\bN=\{1,2,3,4,\ldots \}$ fixing all but finitely many elements. We have the natural inclusions:
$$S_0= S_1\subset S_2 \subset S_3 \subset \ldots \subset \PermutN.$$

\begin{theorem}[cf~\cite{LS1,LS2}]
Schubert polynomials are well-defined for $w\in \PermutN$ and they form a linear basis of $\bR[x_1,x_2,x_3,\ldots]$.
\end{theorem}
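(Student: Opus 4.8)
The plan is to prove the theorem in three stages: well-definedness within each fixed $S_n$ (independence of the reduced word), stability under the inclusions $S_n\subset S_{n+1}$ (which makes $\Sch_w$ unambiguous for $w\in\PermutN$), and finally unitriangularity against the monomial basis, which forces the family to be a basis. First I would verify by direct computation on $\bR[x_1,x_2,\ldots]$ that the divided differences satisfy $\partial_i^2=0$, the commutation $\partial_i\partial_j=\partial_j\partial_i$ for $|i-j|\ge 2$, and the braid relation $\partial_i\partial_{i+1}\partial_i=\partial_{i+1}\partial_i\partial_{i+1}$; these are finite rational-function identities obtained by clearing denominators. Since any two reduced words for a permutation are connected by braid and commutation moves (Matsumoto--Tits), the composite $\partial_v:=\partial_{i_1}\cdots\partial_{i_k}$ depends only on $v=s_{i_1}\cdots s_{i_k}$; applying this with $v=w^{-1}w_0$ shows $\Sch_w=\partial_{w^{-1}w_0}\Sch_{w_0}$ is well defined for each fixed $n$.

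Next I would prove stability. It suffices to check that $\Sch_{w_0^{(n)}}$ computed inside $S_{n+1}$ agrees with its value $x_1^{n-1}x_2^{n-2}\cdots x_{n-1}$ computed in $S_n$: for a general $w\in S_n$ a reduced word for $w^{-1}w_0^{(n)}$ uses only the generators $s_1,\dots,s_{n-1}$, so it returns the same polynomial in both rings. Viewing $w_0^{(n)}$ inside $S_{n+1}$, one computes $w_0^{(n)}w_0^{(n+1)}$ to be the cycle sending $1\mapsto n+1$ and $k\mapsto k-1$ for $k\ge 2$, whose length is $n$ with reduced word $s_n\cdots s_1$. The required identity then reduces to the single computation
\[
\partial_n\cdots\partial_1\bigl(x_1^{\,n}x_2^{\,n-1}\cdots x_n\bigr)=x_1^{\,n-1}x_2^{\,n-2}\cdots x_{n-1},
\]
which follows by repeatedly applying $\partial_i(x_i^{p}x_{i+1}^{q})=\sum_{j=q}^{p-1}x_i^{\,j}x_{i+1}^{\,p+q-1-j}$ (each step here collapses to a single monomial because of the staircase shape).

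For the basis statement, fix $n$ and let $H_n$ be the span of the sub-staircase monomials $x_1^{a_1}\cdots x_{n-1}^{a_{n-1}}$ with $0\le a_i\le n-i$; a direct count gives $\dim H_n=n!=|S_n|$. The top monomial $x_1^{n-1}\cdots x_{n-1}$ of $\Sch_{w_0}$ lies in $H_n$, and the formula for $\partial_i$ on monomials shows each $\partial_i$ maps $H_n$ into itself, so $\Sch_w\in H_n$ for every $w\in S_n$. The crux is to establish the unitriangularity $\Sch_w=x^{c(w)}+(\text{lower-order terms})$, where $c(w)$ is the Lehmer code of $w$ and ``lower'' refers to a fixed monomial order. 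Since $w\mapsto c(w)$ is a bijection from $S_n$ onto the sub-staircase codes, which index exactly the monomials spanning $H_n$, triangularity yields linear independence, and the equality $\dim H_n=|S_n|$ promotes this to a basis of $H_n$. Passing to the limit along the nested inclusions and invoking stability then gives a basis of all of $\bR[x_1,x_2,\ldots]$.

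The main obstacle is the triangularity claim: controlling the leading monomial of $\Sch_w$ requires tracking how each successive divided difference transforms exponent vectors and checking that $x^{c(w)}$ emerges as the unique maximal one. I expect to prove this by induction on $\ell(w_0)-\ell(w)$, with base case $\Sch_{w_0}=x^{\delta}=x^{c(w_0)}$, showing that when $\ell(ws_i)=\ell(w)+1$ the operator $\partial_i$ sends the leading term $x^{c(ws_i)}$ of $\Sch_{ws_i}$ to $x^{c(w)}$, matching the recursion satisfied by the codes; the remaining work is to confirm that $\partial_i$ introduces no competing monomial above $x^{c(w)}$.
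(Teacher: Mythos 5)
The paper does not actually prove this statement---it is quoted as a classical result of Lascoux and Sch\"utzenberger with a ``cf.'' citation---so there is no internal proof to compare against; what you have written is essentially the standard textbook argument (as in Macdonald's notes), and its architecture is sound. Stage 1 (nil-Coxeter relations for the $\partial_i$ plus Matsumoto--Tits) and Stage 2 (stability, reduced to $\partial_n\cdots\partial_1(x_1^n\cdots x_n)=x_1^{n-1}\cdots x_{n-1}$ via the additivity $\ell(w^{-1}w_0^{(n+1)})=\ell(w^{-1}w_0^{(n)})+\ell(v)$, which you should state explicitly since it is what lets you split $\partial_{w^{-1}w_0^{(n+1)}}$ as a composition) are correct and complete. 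The dimension count $\dim H_n=n!$ and the invariance of $H_n$ under $\partial_i$ also check out.

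The genuine gap is the triangularity claim in Stage 3, which you acknowledge but do not close, and the inductive route you propose is exactly where the difficulty lives: $\partial_i$ applied to a single monomial produces a whole string of monomials $\sum_j x_i^jx_{i+1}^{p+q-1-j}$, so to push a ``leading term'' statement through the recursion you must (i) choose a monomial order for which the image of the leading monomial of $\Sch_{ws_i}$ is $x^{c(w)}$ \emph{and} (ii) show that no monomial arising from the lower-order terms of $\Sch_{ws_i}$ overtakes it---and for naive orders (e.g.\ plain lexicographic in $x_1>x_2>\cdots$) point (ii) fails. Two standard ways to finish: either invoke the Billey--Jockusch--Stanley/pipe-dream expansion (Theorems~\ref{stanley-formula} and~\ref{rc-formula} of this paper), in which $\Sch_w$ is visibly a nonnegative sum of monomials with a unique minimal pipe dream contributing $x^{c(w)}$---this is precisely how the present paper argues linear independence of the back stable polynomials in \S\ref{sec:backstable}; or bypass monomial orders entirely using the duality $\bigl(\partial_v\Sch_w\bigr)\big|_{x=0}=\delta_{v,w}$ for $\ell(v)=\ell(w)$, which follows from $\partial_v\Sch_w=\Sch_{wv^{-1}}$ when lengths subtract and $=0$ otherwise, and immediately gives linear independence of $\{\Sch_w: w\in S_n\}$; spanning then follows from your count $\dim H_n=n!$. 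Either patch turns your outline into a complete proof.
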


Since they form a linear basis, there are unique structure constants $c_{u,v}^w$ such that
$$\Sch_u\Sch_v=\sum_{w\in\PermutN}c_{u,v}^w\Sch_w\ \textrm{for any}\ u,v\in \PermutN.$$
These structure constants can be seen as a generalization of the
Littlewood-Richardson coefficients. 
There are a lot of well-known descriptions of these coefficients; see for example,~\cite{BZ,EG,KT,KTW,LR-rule}.

\begin{problem}
The coefficients $c_{u,v}^w,\ u,v,w\in \PermutN$ are $3$-point Gromow-Witten invariants of genus $0$ for flag varieties, which are non-negative integers by representation-theoretical reasons.
An outstanding algebraic combinatorics problem is to give a combinatorial rule for these numbers. There is not even any non-geometric proof of non-negativity. 
\end{problem}

In some particular cases combinatorial rules for multiplication were found. Monk's rule~\cite{Monk} applies when one of the permutations is a simple transposition, see e.g.~\cite{Billey&Co}. Later Pieri's rule and a more general rule for rim hooks were given by F.\,Sottile in~1996~\cite{Sot}, see also~\cite{FL}. K.\,M\'esz\'aros et al. In 2014~\cite{MPP} rewrote  and gave a new proof of the rule for rim hooks (and proved that this way works for hooks with an extra square) in terms of the Fomin-Kirillov algebra~\cite{FK-algebra}. Some other rules with restrictions on both permutations were presented by I.\,Coskun~in~2009~\cite{Cos} and M.\,Kogan~in~2001~\cite{Ko}. Furthermore, A.\,Morrison and F.\,Sottile found an analogue of Murhaghan-Nakayama rule for Schubert polynomials, see~\cite{MS}.  

\medskip

We will work mostly with back stable Schubert polynomials, which are defined for all permutations of integers fixing all but a finite number of elements (denote by $\PermutZ$). A back stable Schubert polynomial is similar to a Schubert polynomial, but has some extra properties (see the definition in section \S3). For these polynomials operators $\mydiff$ and $\nabla$ are given by
$$\mydiff \BSch_u=\sum_{k\in \bZ:\ \ell(s_ku)=\ell(u)-1}\BSch_{s_ku}$$
and
$$\nabla \BSch_u=\sum_{k\in \bZ:\ \ell(s_ku)=\ell(u)-1}k\BSch_{s_ku}.$$
The operator $\nabla$ was defined by R.\,Stanley~\cite{St-nabla} and another interpretation was given by Z.\,Hamaker, O.\,Pechenik, D.\,E.\,Speyer, and A.\,Weigandt~\cite{HPSW}.
Unfortunately, the operator $\mydiff$ cannot be defined for Schubert polynomials.

\nextsubsection
\subsection*{The structure of the paper} 
In the above subsection we defined the ring $\bY$ and presented the definition of Schubert polynomials. 

In \S2 we provide some necessary background to work with Schubert polynomials and Stanley symmetric functions. In \S3 we define back stable Schubert polynomials and introduce the operators $\mydiff$ and $\nabla$ for them. 

In subsection \S4.1 we restrict $\mydiff$ and $\nabla$ to Schur functions. In subsection \S4.2 we prove the main result from introduction~\S1.1. In subsections \S4.3 and \S4.4 we present applications of our new definitions of Schur polynomials. In particular, we present an elementary proof of the determinantal formulas and 
the dual Murnaghan-Nakayama rule.

In section \S5, we return to discussing back stable Schubert polynomials. 

In the last section~\S6, we show how our theory can be used for the case of the product when one factor is a Schubert polynominal and the other is a Schur polynomial. In particular, we show that the introduced operators determine the Gromov-Witten invariants in this case.

\nextsection
\section{Schubert polynomials}
\label{sec:background}
Define $R(w)$ as the set of reduced decompositions of $w\in \PermutN$, and $\ell(w)$ as their corresponding length.

\begin{figure}[htb!]
\scalebox{0.4}{
\hidefigure{
\begin{tikzpicture}

\draw[line width=1mm] (0.5,-0.5) --(0.5,6.5);
\node[text=black] at (0,6) {3};
\node[text=black] at (0,5) {2};
\node[text=black] at (0,4) {7};
\node[text=black] at (0,3) {1};
\node[text=black] at (0,2) {5};
\node[text=black] at (0,1) {4};
\node[text=black] at (0,0) {6};
\draw[line width=1mm] (8.5,-0.5) --(8.5,6.5);
\node[text=black] at (9,6) {1};
\node[text=black] at (9,5) {2};
\node[text=black] at (9,4) {3};
\node[text=black] at (9,3) {4};
\node[text=black] at (9,2) {5};
\node[text=black] at (9,1) {6};
\node[text=black] at (9,0) {7};
\draw[line width=0.5mm] (7.5,5) -- (8.5,6);
\draw[line width=0.5mm] (7.5,6) -- (8.5,5);
\draw[line width=1mm, violet] (7.6,5.1) -- (8.4,5.9);
\draw[line width=1mm, violet] (7.6,5.9) -- (8.4,5.1);
\draw[line width=0.5mm] (8.5,4) -- (7.5,4);
\draw[line width=0.5mm] (8.5,3) -- (7.5,3);
\draw[line width=0.5mm] (8.5,2) -- (7.5,2);
\draw[line width=0.5mm] (8.5,1) -- (7.5,1);
\draw[line width=0.5mm] (8.5,0) -- (7.5,0);
\draw[line width=0.5mm] (6.5,4) -- (7.5,5);
\draw[line width=0.5mm] (6.5,5) -- (7.5,4);
\draw[line width=1mm, violet] (6.6,4.1) -- (7.4,4.9);
\draw[line width=1mm, violet] (6.6,4.9) -- (7.4,4.1);
\draw[line width=0.5mm] (7.5,6) -- (6.5,6);
\draw[line width=0.5mm] (7.5,3) -- (6.5,3);
\draw[line width=0.5mm] (7.5,2) -- (6.5,2);
\draw[line width=0.5mm] (7.5,1) -- (6.5,1);
\draw[line width=0.5mm] (7.5,0) -- (6.5,0);
\draw[line width=0.5mm] (5.5,5) -- (6.5,6);
\draw[line width=0.5mm] (5.5,6) -- (6.5,5);
\draw[line width=1mm, violet] (5.6,5.1) -- (6.4,5.9);
\draw[line width=1mm, violet] (5.6,5.9) -- (6.4,5.1);
\draw[line width=0.5mm] (6.5,4) -- (5.5,4);
\draw[line width=0.5mm] (6.5,3) -- (5.5,3);
\draw[line width=0.5mm] (6.5,2) -- (5.5,2);
\draw[line width=0.5mm] (6.5,1) -- (5.5,1);
\draw[line width=0.5mm] (6.5,0) -- (5.5,0);
\draw[line width=0.5mm] (4.5,2) -- (5.5,3);
\draw[line width=0.5mm] (4.5,3) -- (5.5,2);
\draw[line width=1mm, violet] (4.6,2.1) -- (5.4,2.9);
\draw[line width=1mm, violet] (4.6,2.9) -- (5.4,2.1);
\draw[line width=0.5mm] (5.5,6) -- (4.5,6);
\draw[line width=0.5mm] (5.5,5) -- (4.5,5);
\draw[line width=0.5mm] (5.5,4) -- (4.5,4);
\draw[line width=0.5mm] (5.5,1) -- (4.5,1);
\draw[line width=0.5mm] (5.5,0) -- (4.5,0);
\draw[line width=0.5mm] (3.5,0) -- (4.5,1);
\draw[line width=0.5mm] (3.5,1) -- (4.5,0);
\draw[line width=1mm, violet] (3.6,0.1) -- (4.4,0.9);
\draw[line width=1mm, violet] (3.6,0.9) -- (4.4,0.1);
\draw[line width=0.5mm] (4.5,6) -- (3.5,6);
\draw[line width=0.5mm] (4.5,5) -- (3.5,5);
\draw[line width=0.5mm] (4.5,4) -- (3.5,4);
\draw[line width=0.5mm] (4.5,3) -- (3.5,3);
\draw[line width=0.5mm] (4.5,2) -- (3.5,2);
\draw[line width=0.5mm] (2.5,1) -- (3.5,2);
\draw[line width=0.5mm] (2.5,2) -- (3.5,1);
\draw[line width=1mm, violet] (2.6,1.1) -- (3.4,1.9);
\draw[line width=1mm, violet] (2.6,1.9) -- (3.4,1.1);
\draw[line width=0.5mm] (3.5,6) -- (2.5,6);
\draw[line width=0.5mm] (3.5,5) -- (2.5,5);
\draw[line width=0.5mm] (3.5,4) -- (2.5,4);
\draw[line width=0.5mm] (3.5,3) -- (2.5,3);
\draw[line width=0.5mm] (3.5,0) -- (2.5,0);
\draw[line width=0.5mm] (1.5,2) -- (2.5,3);
\draw[line width=0.5mm] (1.5,3) -- (2.5,2);
\draw[line width=1mm, violet] (1.6,2.1) -- (2.4,2.9);
\draw[line width=1mm, violet] (1.6,2.9) -- (2.4,2.1);
\draw[line width=0.5mm] (2.5,6) -- (1.5,6);
\draw[line width=0.5mm] (2.5,5) -- (1.5,5);
\draw[line width=0.5mm] (2.5,4) -- (1.5,4);
\draw[line width=0.5mm] (2.5,1) -- (1.5,1);
\draw[line width=0.5mm] (2.5,0) -- (1.5,0);
\draw[line width=0.5mm] (0.5,3) -- (1.5,4);
\draw[line width=0.5mm] (0.5,4) -- (1.5,3);
\draw[line width=1mm, violet] (0.6,3.1) -- (1.4,3.9);
\draw[line width=1mm, violet] (0.6,3.9) -- (1.4,3.1);
\draw[line width=0.5mm] (1.5,6) -- (0.5,6);
\draw[line width=0.5mm] (1.5,5) -- (0.5,5);
\draw[line width=0.5mm] (1.5,2) -- (0.5,2);
\draw[line width=0.5mm] (1.5,1) -- (0.5,1);
\draw[line width=0.5mm] (1.5,0) -- (0.5,0);
\end{tikzpicture}}}
\caption{A reduced decomposition $s_1s_2s_1s_4s_6s_5s_4s_3$ of $3271546\in \PermutZ$.}
\label{red-decomp}
\end{figure}

\begin{definition}
For a reduced decomposition $h=(h_1,h_2,\ldots,h_{\ell(w)})\in R(w).$
Let $C(h)$ be the set of all $\ell(w)$-tuples $(\alpha_1,\ldots,\alpha_{\ell(w)})$ of positive integers such that
\begin{itemize}
\item $1\leq \alpha_1\leq \alpha_2\leq \ldots \leq \alpha_{\ell(w)}$;
\item $\alpha_j\leq h_j$;
\item $\alpha_j< \alpha_{j+1}\ \textrm{if}\ h_j < h_{j+1}$.
\end{itemize}
\end{definition}

\begin{theorem}[cf.~\cite{Billey&Co,FSt}] \label{stanley-formula}
For any permutation $w\in \PermutN$, its Schubert polynomial is given by
$$\Sch_w=\sum_{h\in R(w)} \sum_{\alpha\in C(h)} x_{\alpha_1}x_{\alpha_2}\cdots x_{\alpha_{\ell}}.$$ 
\end{theorem}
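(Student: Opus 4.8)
The plan is to prove the equivalent statement $F_w=\Sch_w$, where $F_w:=\sum_{h\in R(w)}\sum_{\alpha\in C(h)}x_{\alpha_1}x_{\alpha_2}\cdots x_{\alpha_{\ell(w)}}$, by downward induction on $\ell(w)$ inside a fixed symmetric group $S_n$ (taking $n$ large enough that $w\in S_n$, and checking at the end that $F_w$ does not depend on this choice). The engine is the standard recurrence for Schubert polynomials, $\partial_i\Sch_v=\Sch_{vs_i}$ whenever $\ell(vs_i)=\ell(v)-1$ and $\partial_i\Sch_v=0$ otherwise, which is immediate from the Definition together with $\partial_i^2=0$ and the braid relations. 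Since $w_0$ is the longest element, every $w\in S_n$ is reached from $w_0$ by a chain of right multiplications by simple reflections each lowering the length, so the theorem reduces to two purely combinatorial assertions about the generating functions $F$: the base case (I) $F_{w_0}=\Sch_{w_0}$, and the recurrence (II) $\partial_i F_v=F_{vs_i}$ for every right descent $s_i$ of $v$. Granting (I) and (II), one chains (II) along a reduced word of $w^{-1}w_0$ exactly as $\Sch_w$ is built from $\Sch_{w_0}$ in the Definition, with (I) anchoring the induction.

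The main obstacle is (II), because $\partial_i$ does not act monomially: on a monomial $x^\beta$ it gives $0$ when $\beta_i=\beta_{i+1}$, and otherwise a signed sum of $|\beta_i-\beta_{i+1}|$ monomials interpolating between the exponents in positions $i,i+1$. To prove $\partial_i F_v=F_{vs_i}$ I would pass to the level of compatible pairs $(h,\alpha)$ and use that the reduced words of $vs_i$ are exactly those obtained from reduced words of $v$ ending in the letter $i$ by deleting that final letter; I expect $F_{vs_i}$ to match precisely the terms of $\partial_i F_v$ in which one occurrence of $x_i$ (or $x_{i+1}$) is stripped off. Concretely, the plan is to organize the compatible pairs of $v$ into groups on which the interpolating sum produced by $\partial_i$ telescopes with internal cancellation, leaving exactly one surviving monomial per compatible pair of $vs_i$; this is a sign-reversing involution, or an explicit bijection, on compatible sequences. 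Keeping track of the constraints $\alpha_j\le h_j$ and of the strict jumps at the ascents of $h$ while sliding the letter $i$ to the end is the technical heart.

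For the base case (I), since $\Sch_{w_0}$ is by definition the single staircase monomial $x_1^{n-1}x_2^{n-2}\cdots x_{n-1}^{1}$, I must exhibit the same collapse combinatorially. Every $\alpha\in C(h)$ satisfies $\alpha_j\le h_j\le n-1$, so only $x_1,\dots,x_{n-1}$ can occur; I would then argue from the staircase structure of the reduced words of $w_0$, using the forced equalities $\alpha_j=1$ at letters $h_j=1$ together with the strict-increase condition across ascents of $h$, that the only surviving pair $(h,\alpha)$ produces the staircase exponent vector, with coefficient one. Finally, stability (so that the formula holds for $w\in\PermutN$, not merely a fixed $S_n$) follows since enlarging $n$ only introduces reduced words and compatible sequences involving larger letters, adding no new monomials of the relevant support. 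Alternatively, the entire argument can be recast in the nilCoxeter algebra after Fomin--Stanley, where $F$ is the coefficient extraction from the ordered product $\prod_{j}\bigl((1+x_ju_{n-1})\cdots(1+x_ju_1)\bigr)$ and (II) becomes the algebraic identity encoding the action of $\partial_i$; this is the cleanest route past the obstacle above.
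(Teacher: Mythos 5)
The paper itself gives no proof of this theorem; it is quoted as background with a citation to Billey--Jockusch--Stanley and Fomin--Stanley, so there is no internal argument to compare yours against. Your overall strategy --- downward induction from $w_0$ through the divided-difference recurrence, reducing everything to the base case (I) and the combinatorial identity (II) $\partial_i F_v = F_{vs_i}$ --- is the correct and standard one, and your base case can be completed along the lines you indicate: within each constant block of $\alpha$ the letters of $h$ must be strictly decreasing (weak decrease is forced by the compatibility condition, and equal adjacent letters cannot occur in a reduced word) and bounded below by the common value $c$ of $\alpha$ on that block, so the block has length at most $n-c$; summing over $c$ gives total length at most $\binom{n}{2}$, and equality forces $h$ to be the canonical staircase word and $\alpha$ the staircase exponent vector, with coefficient one.

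The genuine gap is step (II), which is the entire content of the theorem and which you describe rather than prove. The claim that the signed interpolating sums produced by $\partial_i$ ``telescope with internal cancellation, leaving exactly one surviving monomial per compatible pair of $vs_i$'' is precisely what must be established, and it is not routine: $\partial_i$ mixes only the variables $x_i$ and $x_{i+1}$, whereas the constraints defining $C(h)$ are global monotonicity conditions tied to the positions of letters inside $h$; moreover a reduced word of $v$ need not end in the letter $i$, so one must first braid/commute $i$ to the final position, and this operation changes $C(h)$ in a way your sketch does not control. Carrying this out honestly is essentially equivalent to the Fomin--Stanley Yang--Baxter computation (or to the Knutson--Miller ``mitosis'' description of how $\partial_i$ acts on pipe dreams), neither of which reduces to a one-line sign-reversing involution. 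Your fallback --- recasting the argument in the nilCoxeter algebra --- is indeed the cleanest way to close the gap, but as written it amounts to citing the known proof rather than supplying one: you would still need to prove the commutation identity $A(x)A(y)=A(y)A(x)$ for the factors $A(x)=(1+xu_{n-1})\cdots(1+xu_1)$ and then verify that $\partial_i$ applied to the coefficient of $v$ extracts the coefficient of $vs_i$. So the proposal is a sound plan whose decisive step is missing.
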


There is a well known interpretation of this formula. For a permutation $w\in \PermutN$ we denote by $\RC(w)$  the set of its RC-graphs (pipe dreams). Namely, we have a grid $\{1,2,3,\ldots\}\times \{1,2,3,\ldots\},$ where each square is 
$\Inter$ or $\NoInter$, such that the $i$-th line starts at the left of the box $(1,i)$ and finishes at the top of the box $(w(i),1)$,  and any two lines intersect  at most once (see Figure~\ref{RC-graph}). For an RC-graph we define the monomial $m(w):=x_1^{d_1} x_2^{d_2}x_3^{d_3}\cdots,$ where $d_i$ is the number of intersections in the $i$-th row.
\begin{figure}[htb!]
\scalebox{0.6}{
\hidefigure{
\begin{tikzpicture}
\node[text=black] at (0.5,9.5) {$\omega^{-1}=$};
\node[text=black] at (1.5,9.5) {4};
\node[text=black] at (2.5,9.5) {2};
\node[text=black] at (3.5,9.5) {1};
\node[text=black] at (4.5,9.5) {6};
\node[text=black] at (5.5,9.5) {5};
\node[text=black] at (6.5,9.5) {3};
\node[text=black] at (7.5,9.5) {7};
\node[text=black] at (8.5,9.5) {8};
\node[text=black] at (9.5,9) {\bf .};
\node[text=black] at (9.7,9) {\bf .};
\node[text=black] at (9.9,9) {\bf .};

\node[text=black] at (0.5,8.5) {1};
\node[text=black] at (0.5,7.5) {2};
\node[text=black] at (0.5,6.5) {3};
\node[text=black] at (0.5,5.5) {4};
\node[text=black] at (0.5,4.5) {5};
\node[text=black] at (0.5,3.5) {6};
\node[text=black] at (0.5,2.5) {7};
\node[text=black] at (0.5,1.5) {8};
\node[text=black] at (1,0.5) {\bf .};
\node[text=black] at (1,0.3) {\bf .};
\node[text=black] at (1,0.1) {\bf .};

\draw[line width=1mm] (1,1) --(1,9) -- (9,9);
\draw[step=1cm,gray,very thin] (1,8) grid (8,9);
\draw[step=1cm,gray,very thin] (1,7) grid (7,8);
\draw[step=1cm,gray,very thin] (1,6) grid (6,7);
\draw[step=1cm,gray,very thin] (1,5) grid (5,6);
\draw[step=1cm,gray,very thin] (1,4) grid (4,5);
\draw[step=1cm,gray,very thin] (1,3) grid (3,4);
\draw[step=1cm,gray,very thin] (1,2) grid (2,3);

\node[text=black] at (5.7,5.3) {\bf .};
\node[text=black] at (5.9,5.1) {\bf .};
\node[text=black] at (6.1,4.9) {\bf .};

\draw[line width=0.5mm] (1,8.5) -- (2,8.5);
\draw[line width=0.5mm] (1.5,8) -- (1.5,9);
\draw [fill=red] (1.5,8.5) circle [radius=0.1];
\draw[line width=0.5mm] (2,8.5) -- (3,8.5);
\draw[line width=0.5mm] (2.5,8) -- (2.5,9);
\draw [fill=red] (2.5,8.5) circle [radius=0.1];
\draw[line width=0.5mm] (3,8.5) -- (3.5,9);
\draw[line width=0.5mm] (4,8.5) -- (3.5,8);
\draw[line width=0.5mm] (4,8.5) -- (4.5,9);
\draw[line width=0.5mm] (5,8.5) -- (4.5,8);
\draw[line width=0.5mm] (5,8.5) -- (5.5,9);
\draw[line width=0.5mm] (6,8.5) -- (5.5,8);
\draw[line width=0.5mm] (6,8.5) -- (6.5,9);
\draw[line width=0.5mm] (7,8.5) -- (6.5,8);
\draw[line width=0.5mm] (7,8.5) -- (7.5,9);
\draw[line width=0.5mm] (8,8.5) -- (7.5,8);
\draw[line width=0.5mm] (8,8.5) -- (8.5,9);
\draw[line width=0.5mm] (1,7.5) -- (2,7.5);
\draw[line width=0.5mm] (1.5,7) -- (1.5,8);
\draw [fill=red] (1.5,7.5) circle [radius=0.1];
\draw[line width=0.5mm] (2,7.5) -- (2.5,8);
\draw[line width=0.5mm] (3,7.5) -- (2.5,7);
\draw[line width=0.5mm] (3,7.5) -- (3.5,8);
\draw[line width=0.5mm] (4,7.5) -- (3.5,7);
\draw[line width=0.5mm] (4,7.5) -- (5,7.5);
\draw[line width=0.5mm] (4.5,7) -- (4.5,8);
\draw [fill=red] (4.5,7.5) circle [radius=0.1];
\draw[line width=0.5mm] (5,7.5) -- (5.5,8);
\draw[line width=0.5mm] (6,7.5) -- (5.5,7);
\draw[line width=0.5mm] (6,7.5) -- (6.5,8);
\draw[line width=0.5mm] (7,7.5) -- (6.5,7);
\draw[line width=0.5mm] (7,7.5) -- (7.5,8);
\draw[line width=0.5mm] (1,6.5) -- (2,6.5);
\draw[line width=0.5mm] (1.5,6) -- (1.5,7);
\draw [fill=red] (1.5,6.5) circle [radius=0.1];
\draw[line width=0.5mm] (2,6.5) -- (3,6.5);
\draw[line width=0.5mm] (2.5,6) -- (2.5,7);
\draw [fill=red] (2.5,6.5) circle [radius=0.1];
\draw[line width=0.5mm] (3,6.5) -- (3.5,7);
\draw[line width=0.5mm] (4,6.5) -- (3.5,6);
\draw[line width=0.5mm] (4,6.5) -- (4.5,7);
\draw[line width=0.5mm] (5,6.5) -- (4.5,6);
\draw[line width=0.5mm] (5,6.5) -- (5.5,7);
\draw[line width=0.5mm] (6,6.5) -- (5.5,6);
\draw[line width=0.5mm] (6,6.5) -- (6.5,7);
\draw[line width=0.5mm] (1,5.5) -- (1.5,6);
\draw[line width=0.5mm] (2,5.5) -- (1.5,5);
\draw[line width=0.5mm] (2,5.5) -- (3,5.5);
\draw[line width=0.5mm] (2.5,5) -- (2.5,6);
\draw [fill=red] (2.5,5.5) circle [radius=0.1];
\draw[line width=0.5mm] (3,5.5) -- (3.5,6);
\draw[line width=0.5mm] (4,5.5) -- (3.5,5);
\draw[line width=0.5mm] (4,5.5) -- (4.5,6);
\draw[line width=0.5mm] (5,5.5) -- (4.5,5);
\draw[line width=0.5mm] (5,5.5) -- (5.5,6);
\draw[line width=0.5mm] (1,4.5) -- (1.5,5);
\draw[line width=0.5mm] (2,4.5) -- (1.5,4);
\draw[line width=0.5mm] (2,4.5) -- (2.5,5);
\draw[line width=0.5mm] (3,4.5) -- (2.5,4);
\draw[line width=0.5mm] (3,4.5) -- (3.5,5);
\draw[line width=0.5mm] (4,4.5) -- (3.5,4);
\draw[line width=0.5mm] (4,4.5) -- (4.5,5);
\draw[line width=0.5mm] (1,3.5) -- (1.5,4);
\draw[line width=0.5mm] (2,3.5) -- (1.5,3);
\draw[line width=0.5mm] (2,3.5) -- (2.5,4);
\draw[line width=0.5mm] (3,3.5) -- (2.5,3);
\draw[line width=0.5mm] (3,3.5) -- (3.5,4);
\draw[line width=0.5mm] (1,2.5) -- (1.5,3);
\draw[line width=0.5mm] (2,2.5) -- (1.5,2);
\draw[line width=0.5mm] (2,2.5) -- (2.5,3);
\draw[line width=0.5mm] (1,1.5) -- (1.5,2);
\node[text=black] at (12.5,9.5) {degrees of $x_i$};
\node[text=red] at (12.5,8.5) {2};
\node[text=red] at (12.5,7.5) {2};
\node[text=red] at (12.5,6.5) {2};
\node[text=red] at (12.5,5.5) {1};
\node[text=red] at (12.5,4.5) {0};
\node[text=red] at (12.5,3.5) {0};
\node[text=red] at (12.5,2.5) {0};
\node[text=red] at (12.5,1.5) {0};
\node[draw] at (15.5,5) {$m(w)=x_1^2x_2^2x_3^2x_4$};
\end{tikzpicture}}}
\caption{An $\RC$-graph for $w=326154789\ldots \in \PermutN$ and the corresponding monomial.}
\label{RC-graph}
\end{figure}
\begin{theorem}[cf.~\cite{FK-rc}] \label{rc-formula}
For any permutation $w\in \PermutN$, its Schubert polynomial is given by
$$\Sch_w=\sum_{g\in \RC(w)}m(g).$$
\end{theorem}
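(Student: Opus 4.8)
The plan is to derive this formula from Theorem~\ref{stanley-formula} by constructing a bijection between the index set appearing there --- the pairs $(h,\alpha)$ with $h\in R(w)$ and $\alpha\in C(h)$ --- and the set $\RC(w)$, under which the monomial $x_{\alpha_1}x_{\alpha_2}\cdots x_{\alpha_\ell}$ is sent to $m(g)$. Once such a weight-preserving bijection is in hand, the two sums agree term by term and the identity $\Sch_w=\sum_{g\in\RC(w)}m(g)$ drops out of Theorem~\ref{stanley-formula}.

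First I would fix a reading order on the crossings of an RC-graph $g$: traverse the rows from top ($r=1$) downward and, within each row, read the boxes $\Inter$ from right to left. If the $j$-th crossing met in this order occupies box $(r,c)$, record $h_j:=r+c-1$ and $\alpha_j:=r$. The weight match is then immediate, since the number of indices $j$ with $\alpha_j=r$ is exactly the number $d_r$ of crossings in row $r$, so $x_{\alpha_1}\cdots x_{\alpha_\ell}=\prod_r x_r^{d_r}=m(g)$.

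Next I would verify that the pair $(h,\alpha)$ really lands in the index set. The three conditions defining $C(h)$ are built into the reading order: $\alpha$ is weakly increasing because the rows are scanned top to bottom; $\alpha_j\le h_j$ because $c\ge 1$ gives $h_j=r+c-1\ge r=\alpha_j$; and along a single row the column index strictly decreases, so the $h_j$ strictly decrease within a row and the inequality $h_j<h_{j+1}$ can occur only at a passage from one row to the next, where $\alpha$ has already increased strictly. It remains to see that $h$ is a \emph{reduced} word for $w$, and this is the step I expect to be the main obstacle. It is a pipe-following argument: tracking the two strands through box $(r,c)$ shows that a crossing there multiplies the boundary permutation by the simple transposition $s_{r+c-1}$, so the ordered product of the $s_{h_j}$ equals $w$; and the hypothesis that any two strands cross at most once forces each inversion of $w$ to be realized by exactly one crossing and each non-inversion by none, whence the number of crossings equals the number of inversions, namely $\ell(w)$. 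A word of length $\ell(w)$ spelling out $w$ is automatically reduced, so $h\in R(w)$.

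Finally I would exhibit the inverse. Given $(h,\alpha)$ with $h\in R(w)$ and $\alpha\in C(h)$, place a crossing $\Inter$ in box $(\alpha_j,\,h_j-\alpha_j+1)$ for each $j$ and an elbow $\NoInter$ in every remaining box. Here $\alpha_j\le h_j$ guarantees a legal column index $\ge 1$, while the weak monotonicity of $\alpha$ and the strict-increase condition together ensure that distinct indices produce distinct boxes and that the crossings organize into pipes realizing $w$ with no two strands meeting twice, i.e.\ a bona fide element of $\RC(w)$. Checking that this assignment is a two-sided inverse of the reading map above is then routine, which establishes the bijection and completes the proof.
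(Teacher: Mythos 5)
This is a background theorem that the paper imports from the literature (the ``cf.~\cite{FK-rc}'' attribution) and does not prove, so there is no in-paper argument to compare against. Your proposal is the standard derivation of the pipe-dream formula from Theorem~\ref{stanley-formula}, essentially the Bergeron--Billey bijection: a crossing in row $r$ and column $c$ records the letter $r+c-1$ and the variable $x_r$, rows are scanned top to bottom and each row right to left, and the inverse places a crossing in box $(\alpha_j,\,h_j-\alpha_j+1)$. The weight computation and the verification of the three conditions defining $C(h)$ are correct as written. Two points deserve more care than you give them. First, the step you yourself flag --- that the reading word is a word for $w$ and is reduced --- is the entire mathematical content of the equivalence; the parity argument you sketch (pipes $i<j$ cross an odd number of times exactly when $(i,j)$ is an inversion, so ``at most once'' plus the count $\ell(w)$ forces reducedness) is the right one, but it, together with the compatibility of the chosen reading order with composing the transpositions, is what actually needs to be written out, and as it stands it is asserted rather than proved. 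Second, in the inverse direction, injectivity of $j\mapsto(\alpha_j,h_j-\alpha_j+1)$ is not a consequence of the monotonicity conditions alone: when $\alpha_j=\alpha_{j+1}$ the definition of $C(h)$ only gives $h_j\ge h_{j+1}$, and you must invoke reducedness of $h$ to exclude $h_j=h_{j+1}$ (two consecutive equal letters) and hence get the strict decrease that makes the boxes distinct. With those two steps filled in, the bijection is weight-preserving and the theorem follows from Theorem~\ref{stanley-formula} exactly as you say.
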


Recently, another combinatorial description of the Schubert polynomial~\cite{LLS} was obtained: bumpless pipe dream can be seen as a generalization of Rothe diagrams. Until now there is no combinatorial bijection between the two descriptions.

Note that for some reduced decomposition $h$, its set $C(h)$ may be empty. It is easier to work with the reduced decompositions when all $C(h)$ are non-empty. To ensure this, we need to consider shifts of permutations. 
Define the shift $\shift$ on permutations $\shift: \PermutN \to \PermutN$ as
$$\shift w(1):=1\ \textrm{and} \ \shift w (i+1):=w(i)+1,\ \textrm{for}\ w\in \PermutN.$$
The ``toward'' shift was considered, for example, in~\cite{BB, EG, St-sym}. Unfortunately, the problem is that this ``limit'' is a symmetric function which does not ``remember" the initial Schubert polynomial. Nevertheless, it still seems very important and useful. 
\begin{definition}[cf.~\cite{St-sym}]
For a permutation $w\in \PermutN$, define the Stanley symmetric function as the formal expression 
$$\F_w=\F_w(x_i,i\in \bN):=\lim _{k\to+\infty} \Sch_{\shift^kw}(x_{1},x_{2},x_{3},\ldots)\in \bZ[x_i,\ i\in \bN].$$
\end{definition}
Since $\F_w$ is a symmetric function, it admits an expression via Schur polynomials. P.\,Edelman and C.\,Greene found  the expression, their main result states that the coefficients in this expression are non-negative.
\begin{theorem}[cf.~\cite{EG}] For any permutation $w\in \PermutN$
$$\F_w=\sum_{\lambda}a_{\lambda,w}s_{\lambda},$$
where $a_{\lambda,w}$ are non-negative.
\end{theorem}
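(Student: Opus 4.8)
The plan is to reprove the Edelman--Greene theorem by their insertion algorithm, a Coxeter analogue of the Robinson--Schensted--Knuth correspondence; this realizes each coefficient $a_{\lambda,w}$ as the cardinality of an explicit set of tableaux, and hence as a non-negative integer.

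First I would write down the monomial expansion of $\F_w$. The definition of $\F_w$ stabilizes the Schubert polynomials along the shift $\shift$, and the only effect of $\shift^k$ on the formula of Theorem~\ref{stanley-formula} is to raise the letters of each reduced word by $k$, which pushes the upper bounds $\alpha_j\le h_j$ out of range while leaving the strict-increase conditions intact. Passing to the limit, these bounds become vacuous and
$$\F_w = \sum_{h \in R(w)}\ \sum_{\alpha} x_{\alpha_1} x_{\alpha_2} \cdots x_{\alpha_\ell},$$
where, for a reduced word $h=(h_1,\ldots,h_\ell)\in R(w)$, the index $\alpha$ runs over all sequences of positive integers with $\alpha_1\le\alpha_2\le\cdots\le\alpha_\ell$ and $\alpha_j<\alpha_{j+1}$ whenever $h_j<h_{j+1}$.

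Next I would introduce Edelman--Greene insertion. Reading $h$ from left to right, one row-inserts each letter $h_j$ into an increasing tableau $P$ (strictly increasing along rows and down columns) using the modified bumping rule: if the value $v$ being inserted meets a row already containing both $v$ and $v+1$, then $v+1$ is bumped and the row is left unchanged; otherwise one performs the usual bumping, where $v$ displaces the smallest entry exceeding it. In parallel I would record step $j$ by writing $\alpha_j$ into the box newly created in $P$, producing a filling $Q$. The three facts to prove are: \textbf{(i)} the tableau $P$ is always increasing and its reading word (rows read left to right, bottom to top) is a reduced word for the same permutation $w$; \textbf{(ii)} the map $(h,\alpha)\mapsto(P,Q)$ is a bijection, invertibility being obtained by reverse-bumping one box at a time; and \textbf{(iii)} the compatibility conditions on $\alpha$ translate precisely into the statement that $Q$ is a semistandard tableau (weakly increasing along rows, strictly increasing down columns) of the same shape $\lambda$ as $P$.

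Finally I would assemble the pieces. Grouping the monomials of the expansion above by their insertion tableau $P$, and using the combinatorial definition of the Schur function as a sum over semistandard tableaux, I obtain
$$\F_w=\sum_{(h,\alpha)} x^{\alpha}=\sum_{\lambda} a_{\lambda,w}\sum_{Q\in\mathrm{SSYT}(\lambda)}x^{\mathrm{wt}(Q)}=\sum_{\lambda} a_{\lambda,w}\,s_\lambda,$$
where $a_{\lambda,w}$ is the number of increasing tableaux of shape $\lambda$ whose reading word is a reduced word for $w$; in particular $a_{\lambda,w}\in\bZ_{\ge 0}$. I expect the main obstacle to be fact \textbf{(i)}: showing that the modified bumping keeps $P$ strictly increasing and, above all, does not change the permutation encoded by its reading word. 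This requires a careful case analysis verifying that each elementary insertion step alters the reading word only through Coxeter--Knuth moves (commutations $s_as_b=s_bs_a$ for $|a-b|\ge 2$ and braid relations $s_as_{a+1}s_a=s_{a+1}s_as_{a+1}$), which is exactly what preserves both reducedness and the underlying permutation.
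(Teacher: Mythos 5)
The paper does not prove this statement; it is quoted as background with a citation to \cite{EG}, so there is no internal argument to compare against. Your outline reproduces the standard Edelman--Greene proof from that reference: stabilize the Billey--Jockusch--Stanley formula to get the monomial expansion of $\F_w$, then biject pairs (reduced word, compatible sequence) with pairs $(P,Q)$ via Coxeter--Knuth insertion, so that $a_{\lambda,w}$ counts increasing tableaux of shape $\lambda$ with reduced reading word. This is the right approach and is exactly the proof the paper is implicitly invoking; the remaining work is the case analysis for your fact \textbf{(i)}, which you have correctly identified as the crux.
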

Another stability is considered in the next section. 

\nextsection
\section{Back stable Schubert polynomials}
\label{sec:backstable}

Let us now consider permutations on a larger set  $\bZ$. Let $\PermutZ$ be the set of permutations on $\bZ$ fixing all but a finite number of elements.
Define the shift~$\shift$ on permutations $\PermutZ$ as
$$\shift w (i+1):=w(i)+1,\ \textrm{for}\ w\in \PermutZ.$$  It is more convenient to work with all integers rather than with just the positives,  as it was considered by T.\,Lam, S.\,J.\,Lee, and M.\,Schimozono~\cite{LLS}. We use their notations. Denote by $\Lambda$ the ring of symmetric polynomials in $\{x_i,\ i\leq 0\}$. 

\begin{definition}[cf.~\cite{LLS}]
For a permutation $w\in \PermutZ$ define its back stable Schubert polynomial as the formal expression:
$$\BSch_w=\BSch_w(x_i,i\in \bZ):=\lim _{k\to+\infty} \Sch_{\shift^kw}(x_{1-k},x_{2-k},x_{3-k},\ldots)\in \Lambda \otimes \bQ[x_i,\ i\in \bZ].$$
\end{definition}
\begin{remark}
For permutations $w\in \PermutN$, we have the following relation between their back stable and their usual Schubert polynomials:
$$\BSch_w(x_i=0, i\leq 0)=\Sch_w.$$
\end{remark}

\begin{proposition}
The definition is correct and $\{\BSch_w,\ w\in\PermutZ\}$ are linearly independent.
\end{proposition}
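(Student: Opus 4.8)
The plan is to work from the combinatorial formula of Theorem~\ref{stanley-formula} and to track how the data $(R(w),C(h))$ transforms under the shift $\shift$. The single structural fact driving everything is that a reduced word for $\shift w$ is obtained from one for $w$ by adding $1$ to every letter, so
$$R(\shift^k w)=\{(h_1+k,\ldots,h_\ell+k):(h_1,\ldots,h_\ell)\in R(w)\},$$
while the pattern of weak versus strict ascents in each word is unchanged. First I would feed this into the formula and keep careful account of the reindexing $x_j\mapsto x_{j-k}$ built into the definition of $\BSch_w$.

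For correctness, i.e.\ existence of the limit, a compatible sequence $\alpha\in C(h+k)$ satisfies $1\le\alpha_1\le\cdots\le\alpha_\ell$, $\alpha_j\le h_j+k$, with strict inequality exactly where $h$ strictly ascends. After the substitution $x_j\mapsto x_{j-k}$ the exponents become $\beta_j:=\alpha_j-k$, and the constraints read $\beta_j\le h_j$, $\beta_j\ge 1-k$, together with the $k$-independent ascent pattern. Thus the \emph{only} dependence on $k$ is the lower bound $\beta_j\ge 1-k$, which recedes to $-\infty$. Hence for any fixed monomial the coefficient is eventually constant: once $1-k$ is at most the smallest index occurring in that monomial, the lower bound is vacuous and no further terms appear. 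This proves the limit exists coefficient-by-coefficient and yields the normal form
$$\BSch_w=\sum_{h\in R(w)}\ \sum_{\beta}\ x_{\beta_1}\cdots x_{\beta_{\ell(w)}},$$
where $\beta$ runs over all weakly increasing integer sequences with $\beta_j\le h_j$ and a strict ascent wherever $h$ ascends.

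To see that this element lies in $\Lambda\otimes\bQ[x_i,\ i\in\bZ]$ I would check two things. First, $\BSch_w$ is homogeneous of degree $\ell(w)$ and only finitely many positive variables can occur, because $w$ fixes all sufficiently large positive integers, so the positive-index support of $\Sch_{\shift^k w}$ stays bounded after reindexing; the positive part is therefore a genuine polynomial. Second, $\BSch_w$ is symmetric in the non-positive variables $\{x_i:i\le 0\}$: for sufficiently negative indices the upper bound $\beta_j\le h_j$ is automatic, so among those variables the sum is governed only by the ascent-composition data, which is precisely the structure producing the symmetric Stanley function $\F_w$ (cf.~\cite{EG,St-sym}). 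For linear independence I would use that $\shift$ acts as an invertible substitution: the normal form shows $\BSch_{\shift w}$ is $\BSch_w$ under $x_i\mapsto x_{i+1}$. Given a finite relation $\sum_w c_w\BSch_w=0$, applying this substitution $k$ times (legitimate since $\shift$ is injective on $\PermutZ$ and $\shift^k w\in\PermutN$ for $k\gg0$) gives $\sum_w c_w\BSch_{\shift^k w}=0$ with the $\shift^k w$ pairwise distinct in $\PermutN$; specializing $x_i=0$ for $i\le 0$ via the Remark turns this into $\sum_w c_w\Sch_{\shift^k w}=0$, whence $c_w=0$ because ordinary Schubert polynomials form a basis of $\bR[x_1,x_2,\ldots]$ (\cite{LS1,LS2}).

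The main obstacle is the correctness half: cleanly isolating the entire $k$-dependence in the single receding lower bound, and identifying the non-positive tail with the symmetric Stanley structure. Once this normal form is established, linear independence is routine via the shift reduction to $\PermutN$ and the known basis property. A minor technical point I would address explicitly is the precise meaning of the (possibly completed) tensor product $\Lambda\otimes\bQ[x_i,\ i\in\bZ]$, confirming that each $\BSch_w$ involves only finitely many positive variables and has symmetric-function coefficients of bounded degree in the non-positive variables.
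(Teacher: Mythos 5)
Your proof is correct, but the linear-independence half goes by a genuinely different route than the paper's. The paper argues entirely inside the back-stable world: it orders monomials lexicographically with respect to the alphabet $\cdots<x_{-1}<x_0<x_1<\cdots$ and observes, via the RC-graph formula, that the leading monomial of $\BSch_w$ is $\prod_i x_i^{d_i}$ where $(d_i)$ is the Lehmer code of $w$; distinct permutations have distinct codes, so the family is independent. You instead apply $\shift^k$ to a putative finite relation until every permutation involved lies in $\PermutN$, specialize $x_i=0$ for $i\le 0$ using the Remark, and invoke the cited fact that ordinary Schubert polynomials form a basis of $\bR[x_1,x_2,\ldots]$. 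Both are valid; the paper's argument is self-contained and yields the explicit leading-monomial description as a by-product, while yours is shorter modulo the basis theorem and needs only that $x_i\mapsto x_{i+1}$ is a well-defined substitution on $\Lambda\otimes\bQ[x_i,\ i\in\bZ]$ carrying $\BSch_w$ to $\BSch_{\shift w}$, which your normal form supplies. For the well-definedness half you and the paper use the same idea, stabilization of the reduced-word formula, though you write out the receding lower bound the paper only gestures at. One small correction: $\BSch_w$ is not literally symmetric in all of $\{x_i:i\le 0\}$ (for instance $\BSch_{s_{-1}}=x_{-1}+x_{-2}+\cdots$ omits $x_0$); what your analysis actually shows is symmetry in $x_i,x_{i+1}$ for every sufficiently negative $i$, and it is this, together with the bounded degree and bounded positive support, that places the limit in $\Lambda\otimes\bQ[x_i,\ i\in\bZ]$.
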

\begin{proof}
The first part is clear from the RC-graphs constructions and from the reduced decomposition formula. The second part holds, since any two back stable polynomials have different leading monomials in lexicographic order with the alphabet $$\ldots<x_{-3}<x_{-2}<x_{-1}<x_0<x_2<x_3<\ldots.$$
More specifically, for a permutation $w\in \PermutZ$, 
its leading monomial equals $\prod_{i=-\infty}^{\infty}x_i^{d_i},$ 
where $(\ldots,d_{-2},d_{-1},d_{0},d_{1},d_{2},\ldots)$ is the Lehmer code of $w$ (this is easy to see from Theorem~\ref{rc-formula}).
\end{proof}

\begin{proposition}[cf.~\cite{LLS}]
Given a pair of permutations $u,v\in \PermutZ,$ there is a  unique set of constants $c_{u,v}^{\bullet}$ such that
$$\BSch_u\BSch_v=\sum_{w\in \PermutZ} c_{u,v}^w \BSch_w.$$
\end{proposition}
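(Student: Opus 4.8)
The uniqueness is immediate from the preceding proposition. If $\BSch_u\BSch_v=\sum_w c_w\BSch_w=\sum_w c'_w\BSch_w$ were two expansions, then $\sum_w(c_w-c'_w)\BSch_w=0$, and the linear independence of $\{\BSch_w:w\in\PermutZ\}$ forces $c_w=c'_w$ for every $w$. Thus the whole content of the proposition is the \emph{existence} of a (necessarily finite) expansion, i.e.\ that the product lies in the span of the back stable Schubert polynomials. The plan is to establish this by a leading-term reduction with respect to the same lexicographic order used in the previous proof.

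Recall that each $\BSch_w$ is homogeneous of degree $\ell(w)$ and monic, with leading monomial $x^{\mathrm{code}(w)}=\prod_i x_i^{d_i}$ where $(d_i)$ is the Lehmer code of $w$; distinct permutations give distinct codes, and the Lehmer code is moreover a bijection between $\PermutZ$ and the finitely supported non-negative integer sequences indexed by $\bZ$. Since the lexicographic order is multiplicative and the leading coefficients are positive, $\BSch_u\BSch_v$ is homogeneous of degree $N:=\ell(u)+\ell(v)$ with leading monomial $x^{\mathrm{code}(u)+\mathrm{code}(v)}$. The exponent vector $\mathrm{code}(u)+\mathrm{code}(v)$ is again finitely supported and non-negative, hence is the code of a unique $w_0\in\PermutZ$ with $\ell(w_0)=N$; subtracting $\BSch_{w_0}$ cancels the leading monomial and leaves a remainder that is still homogeneous of degree $N$ but has strictly smaller leading monomial. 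Iterating, every leading monomial we meet is the code of some permutation, so at each stage it can be stripped off by a single back stable Schubert polynomial. This is exactly the triangularity that, provided the process halts, produces the desired expansion, with the coefficients $c_{u,v}^w$ read off as the successive leading coefficients.

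The main obstacle is termination — equivalently, the finiteness of the expansion. The lexicographic order is not a well-order on degree-$N$ exponent vectors over the two-sided alphabet of the previous proposition: because $\BSch_u$ and $\BSch_v$ are symmetric in $\{x_i:i\le 0\}$ they involve infinitely many monomials, and a priori the reduction could descend forever through monomials such as $x_{-1}^N>x_{-2}^N>\cdots$. Ruling this out amounts to bounding, uniformly in the step, the permutations $w$ that can occur. The route I would pursue passes through ordinary Schubert polynomials: for $k$ large one has $\shift^k u,\shift^k v\in\PermutN$, and since ordinary Schubert polynomials form a basis of $\bQ[x_1,x_2,\ldots]$ the product $\Sch_{\shift^k u}\Sch_{\shift^k v}=\sum_y d^{(k)}_y\Sch_y$ is a \emph{finite} sum, while $\BSch_u\BSch_v$ is the window-shifted limit of these finite sums. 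The heart of the matter is then the back stability of the structure constants: that $d^{(k)}_{\shift^k w}$ is eventually independent of $k$ and nonzero for only finitely many $w$. Granting this stabilization — the structural input underlying~\cite{LLS} — its stable values are precisely the $c_{u,v}^w$, the expansion is finite, and existence, hence the proposition, follows. I expect this stabilization, rather than the formal reduction itself, to be where essentially all of the work lies.
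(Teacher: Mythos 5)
The paper offers no proof of this proposition at all: it is stated with the citation ``cf.~\cite{LLS}'' and the burden of both existence and uniqueness is placed on Lam--Lee--Shimozono, so there is no internal argument to compare yours against. Your uniqueness step is correct and complete, resting exactly on the linear independence established in the preceding proposition. For existence, you have correctly diagnosed the real difficulty: the greedy leading-term reduction does not obviously terminate, because the lexicographic order on degree-$N$ exponent vectors over the two-sided alphabet $\ldots<x_{-2}<x_{-1}<x_0<x_1<\ldots$ is not a well-order (the chain $x_{-1}^N>x_{-2}^N>\cdots$ you exhibit is precisely the obstruction), so triangularity alone does not prove that the product lies in the \emph{span} of the $\BSch_w$. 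Your fallback through ordinary Schubert polynomials is the right idea, but note that it needs slightly more than finiteness of each $\Sch_{\shift^k u}\Sch_{\shift^k v}=\sum_y d_y^{(k)}\Sch_y$: one must know that the only $y$ occurring are of the form $\shift^k w$ and that $d^{(k)}_{\shift^k w}$ stabilizes --- this is exactly the relation $c_{u,v}^w=c_{\shift^k u,\shift^k v}^{\shift^k w}$ that the paper asserts (again without proof) in the paragraph following the proposition, and it is classical for ordinary Schubert polynomials. So your proposal is honest about where the work lies and is, if anything, more explicit than the paper's treatment; to make it self-contained one would import from~\cite{LLS} either this shift-stability of structure constants or the statement that $\{\BSch_w\}$ is a basis of $\Lambda\otimes\bQ[x_i,\ i\in\bZ]$, either of which immediately yields existence.
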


For a triplet of permutations $u,v,w\in \PermutN,$ the structure constant $c_{u,v}^w$ is exactly the structure constant for the usual Schubert polynomials. Furthermore, we do not ``get'' new constants, because all the new constants are equal to the structure constants for the original Schubert polynomials.  Namely, the structure constants for back stable Schubert polynomials satisfy the following relations:
$$c_{u,v}^w=c_{\shift^k u,\tau^k v}^{\tau^k w},\ \text{for}\ u,v,w\in \PermutZ\ \text{and}\ k\in \bZ,$$
which implies that any back stable structure constant is equal to some original constant for large $k$.

Although we know that finding these constants for $\BSch$ is equivalent to finding them for $\Sch$, back stable Schubert polynomials have a few more properties.
\begin{proposition} \label{pr:equality} Given a pair of permutations $u,v\in \PermutZ$, the following holds:
$$\mybinom{\ell(u)+\ell(v)}{\ell(v)}|\R(u)||\R(v)|=\sum_{w\in \PermutZ} c_{u,v}^w\ |\R(w)|,$$
where $\ell(u)$ is the length (the number of inversions) of $u$ and $\R(u)$ is the set of its reduced words.
\end{proposition}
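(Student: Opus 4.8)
The plan is to reduce the identity to a statement about Stanley symmetric functions and then to extract it by applying a single linear functional. First I would record that $\BSch_w$ is homogeneous of degree $\ell(w)$: it is the limit of the homogeneous polynomials $\Sch_{\shift^k w}$, and shifting preserves length, $\ell(\shift^k w)=\ell(w)$. Consequently $\BSch_u\BSch_v$ is homogeneous of degree $\ell(u)+\ell(v)$, so $c_{u,v}^w=0$ unless $\ell(w)=\ell(u)+\ell(v)$; in particular the sum on the right-hand side runs only over such $w$.

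Next I would pass from Schubert polynomials to Stanley symmetric functions. Let $\pi\colon \Lambda\otimes\bQ[x_i,\ i\in\bZ]\to\Lambda$ be the specialization sending $x_i\mapsto 0$ for every $i>0$; being an evaluation, it is a ring homomorphism. Comparing the two limit definitions shows $\pi(\BSch_w)=\F_w$: discarding the positive-index variables of $\BSch_w$ leaves precisely the shifted limit that defines $\F_w$. Applying $\pi$ to $\BSch_u\BSch_v=\sum_w c_{u,v}^w\BSch_w$ then gives
\[
\F_u\,\F_v=\pi(\BSch_u)\,\pi(\BSch_v)=\pi(\BSch_u\BSch_v)=\sum_{w}c_{u,v}^w\,\F_w ,
\]
the sum being over $w$ with $\ell(w)=\ell(u)+\ell(v)$. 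Thus the same structure constants govern the product of Stanley symmetric functions.

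Finally I would apply the functional $L$ that reads off the coefficient of the squarefree monomial $x_1x_2\cdots x_d$ from a symmetric function of degree $d$ (equivalently $L(f)=\langle f,h_1^{d}\rangle$). From Stanley's formula (Theorem~\ref{stanley-formula}) in the stable limit, the only weakly increasing compatible sequence taking the distinct values $1,\dots,\ell(w)$ is $(1,2,\dots,\ell(w))$, so each reduced word contributes exactly once and $L(\F_w)=|\R(w)|$. On the other hand, extracting $x_1\cdots x_{\ell(u)+\ell(v)}$ from $\F_u\F_v$ amounts to splitting the index set $\{1,\dots,\ell(u)+\ell(v)\}$ into the block feeding $\F_u$ and the block feeding $\F_v$; since $\F_u,\F_v$ are homogeneous of degrees $\ell(u),\ell(v)$, each of the $\mybinom{\ell(u)+\ell(v)}{\ell(v)}$ such splittings contributes the product of the coefficients of squarefree monomials of the correct degrees, namely $|\R(u)|\,|\R(v)|$. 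Hence $L(\F_u\F_v)=\mybinom{\ell(u)+\ell(v)}{\ell(v)}|\R(u)||\R(v)|$, while $L\bigl(\sum_w c_{u,v}^w\F_w\bigr)=\sum_w c_{u,v}^w|\R(w)|$; equating the two yields the claim.

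The main obstacle is Step two, i.e.\ showing that the top-degree Schubert structure constants coincide with those for Stanley symmetric functions. The clean route is the identity $\pi(\BSch_w)=\F_w$ together with $\pi$ being a ring homomorphism; the only genuine work is unwinding the two limit definitions to verify $\pi(\BSch_w)=\F_w$. Alternatively one may use the exponential specialization $\mathrm{ex}\colon\Lambda\to\bQ[t]$, a ring homomorphism satisfying $\mathrm{ex}(\F_w)=|\R(w)|\,t^{\ell(w)}/\ell(w)!$, and compare coefficients of $t^{\ell(u)+\ell(v)}$, in which case the binomial coefficient emerges as $(\ell(u)+\ell(v))!/(\ell(u)!\,\ell(v)!)$. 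The remaining ingredients, namely homogeneity and $L(\F_w)=|\R(w)|$, are routine.
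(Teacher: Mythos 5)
Your argument is correct in outline and its engine is the same as the paper's: evaluate a linear functional that reads off the coefficient of a squarefree monomial, observe that on $\BSch_w$ (resp.\ $\F_w$) this coefficient equals $|\R(w)|$, and get the binomial factor from distributing the squarefree index set between the two factors of the product. The paper, however, does this in one step, directly on the identity $\BSch_u\BSch_v=\sum_w c_{u,v}^w\BSch_w$: it extracts the coefficient of $x_{i_1}\cdots x_{i_{\ell(u)+\ell(v)}}$ with $-N>i_1>\cdots>i_{\ell(u)+\ell(v)}$ for $N$ large. For such deeply negative indices the compatible-sequence constraints $\alpha_j\le h_j$ in Theorem~\ref{stanley-formula} are vacuous, so each reduced word contributes exactly once and the coefficient in $\BSch_w$ is $|\R(w)|$ on the nose. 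Your detour through Stanley symmetric functions is where the one soft spot lies: the claim that $\pi(\BSch_w)=\F_w$ ``by comparing the two limit definitions'' is too quick. Killing the positive-index variables in $\BSch_w$ keeps exactly the compatible sequences with all $\alpha_j\le 0$ \emph{and} $\alpha_j\le h_j$, whereas $\F_w$ rewritten in the variables $x_{\le 0}$ is governed by sequences with no upper bound from the reduced word; that these agree (equivalently, that the truncation is still symmetric) is a genuine theorem of Lam--Lee--Shimozono, not a formal unwinding of limits. You should either cite that result or, better, drop the projection entirely and apply your functional $L$ to a sufficiently deep squarefree monomial in $\BSch_u\BSch_v$ itself, which is precisely the paper's proof. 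With that repair (and the cosmetic fix that your squarefree monomial should live in the non-positive indices), the rest of your argument --- homogeneity forcing $\ell(w)=\ell(u)+\ell(v)$, $L(\F_w)=|\R(w)|$, and the $\mybinom{\ell(u)+\ell(v)}{\ell(v)}$ count of splittings --- goes through as written.
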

\begin{proof} Note that for any permutation $u$ and a sufficient large number $N$, we have that the coefficient of any monomial $x_{i_1}\ldots x_{i_{\ell(u)}},\  (-N)>i_1>\ldots >i_{\ell(u)}$ is equal to the number of the reduced decompositions of $u$. Therefore, we obtain our equality. 
\end{proof}

The above proposition gives some hope that, for back stable Schubert polynomials, one can construct a rule by ``merging'' the reduced decompositions, see fig.~\ref{fig:merge} ($\BSch_{(01324)}\BSch_{(02314)}=\BSch_{(12304)}+\BSch_{(02413)}$; the reduced decompositions are drawn as wiring diagrams). 
Since all structure constants are positive, this procedure should exist. Furthermore there is a procedure, which agrees with $\mydiff$ (see the next subsection).  
It is possible to construct such a rule for the product of Schur functions using Edelman-Greene's algorithm~\cite{EG} (the algorithm was introduced originally to express Stanley symmetric functions~\cite{St-sym} in terms of Schur functions).

\begin{figure}[htb!]
\scalebox{0.4}{
\hidefigure{
\begin{tikzpicture}

\draw[line width=1mm] (-0.5,-0.5) --(-0.5,4.5);
\draw[line width=0.5mm] (-0.5,1) -- (0.5,2);
\draw[line width=0.5mm] (-0.5,2) -- (0.5,1);
\draw[line width=1mm, blue] (-0.4,1.1) -- (0.4,1.9);
\draw[line width=1mm, blue] (-0.4,1.9) -- (0.4,1.1);
\draw[line width=0.5mm] (0.5,4) -- (-0.5,4);
\draw[line width=0.5mm] (0.5,3) -- (-0.5,3);
\draw[line width=0.5mm] (0.5,0) -- (-0.5,0);
\draw[line width=1mm] (0.5,-0.5) --(0.5,4.5);
\node[text=black] at (1,2) {$\Join$};
\draw[line width=1mm] (1.5,-0.5) --(1.5,4.5);
\draw[line width=0.5mm] (1.5,1) -- (2.5,2);
\draw[line width=0.5mm] (1.5,2) -- (2.5,1);
\draw[line width=1mm, red] (1.6,1.1) -- (2.4,1.9);
\draw[line width=1mm, red] (1.6,1.9) -- (2.4,1.1);
\draw[line width=0.5mm] (2.5,4) -- (1.5,4);
\draw[line width=0.5mm] (2.5,3) -- (1.5,3);
\draw[line width=0.5mm] (2.5,0) -- (1.5,0);
\draw[line width=0.5mm] (2.5,2) -- (3.5,3);
\draw[line width=0.5mm] (2.5,3) -- (3.5,2);
\draw[line width=1mm, red] (2.6,2.1) -- (3.4,2.9);
\draw[line width=1mm, red] (2.6,2.9) -- (3.4,2.1);
\draw[line width=0.5mm] (3.5,4) -- (2.5,4);
\draw[line width=0.5mm] (3.5,1) -- (2.5,1);
\draw[line width=0.5mm] (3.5,0) -- (2.5,0);
\draw[line width=1mm] (3.5,-0.5) --(3.5,4.5);
\node[text=black] at (4,2) {=};
\draw[line width=1mm] (4.5,-0.5) --(4.5,4.5);
\draw[line width=1mm, blue] (4.6,1.1) -- (5.4,1.9);
\draw[line width=1mm, blue] (4.6,1.9) -- (5.4,1.1);
\draw[line width=1mm, red] (5.6,1.1) -- (6.4,1.9);
\draw[line width=1mm, red] (5.6,1.9) -- (6.4,1.1);
\draw[line width=1mm, red] (6.6,2.1) -- (7.4,2.9);
\draw[line width=1mm, red] (6.6,2.9) -- (7.4,2.1);
\draw[line width=1mm] (7.5,-0.5) --(7.5,4.5);
\node[text=black] at (8,2) {+};
\draw[line width=1mm] (8.5,-0.5) --(8.5,4.5);
\draw[line width=1mm, red] (8.6,1.1) -- (9.4,1.9);
\draw[line width=1mm, red] (8.6,1.9) -- (9.4,1.1);
\draw[line width=1mm, blue] (9.6,1.1) -- (10.4,1.9);
\draw[line width=1mm, blue] (9.6,1.9) -- (10.4,1.1);
\draw[line width=1mm, red] (10.6,2.1) -- (11.4,2.9);
\draw[line width=1mm, red] (10.6,2.9) -- (11.4,2.1);
\draw[line width=1mm] (11.5,-0.5) --(11.5,4.5);
\node[text=black] at (12,2) {+};
\draw[line width=1mm] (12.5,-0.5) --(12.5,4.5);
\draw[line width=1mm, red] (12.6,1.1) -- (13.4,1.9);
\draw[line width=1mm, red] (12.6,1.9) -- (13.4,1.1);
\draw[line width=1mm, red] (13.6,2.1) -- (14.4,2.9);
\draw[line width=1mm, red] (13.6,2.9) -- (14.4,2.1);
\draw[line width=1mm, blue] (14.6,1.1) -- (15.4,1.9);
\draw[line width=1mm, blue] (14.6,1.9) -- (15.4,1.1);
\draw[line width=1mm] (15.5,-0.5) --(15.5,4.5);
\draw[line width=1mm] (-0.5,-6.5) --(-0.5,-1.5);
\draw[line width=0.5mm] (-0.5,-5) -- (0.5,-4);
\draw[line width=0.5mm] (-0.5,-4) -- (0.5,-5);
\draw[line width=1mm, blue] (-0.4,-4.9) -- (0.4,-4.1);
\draw[line width=1mm, blue] (-0.4,-4.1) -- (0.4,-4.9);
\draw[line width=0.5mm] (0.5,-2) -- (-0.5,-2);
\draw[line width=0.5mm] (0.5,-3) -- (-0.5,-3);
\draw[line width=0.5mm] (0.5,-6) -- (-0.5,-6);
\draw[line width=1mm] (0.5,-6.5) --(0.5,-1.5);
\node[text=black] at (1,-4) {$\Join$};
\draw[line width=1mm] (1.5,-6.5) --(1.5,-1.5);
\draw[line width=0.5mm] (1.5,-5) -- (2.5,-4);
\draw[line width=0.5mm] (1.5,-4) -- (2.5,-5);
\draw[line width=1mm, red] (1.6,-4.9) -- (2.4,-4.1);
\draw[line width=1mm, red] (1.6,-4.1) -- (2.4,-4.9);
\draw[line width=0.5mm] (2.5,-2) -- (1.5,-2);
\draw[line width=0.5mm] (2.5,-3) -- (1.5,-3);
\draw[line width=0.5mm] (2.5,-6) -- (1.5,-6);
\draw[line width=0.5mm] (2.5,-4) -- (3.5,-3);
\draw[line width=0.5mm] (2.5,-3) -- (3.5,-4);
\draw[line width=1mm, red] (2.6,-3.9) -- (3.4,-3.1);
\draw[line width=1mm, red] (2.6,-3.1) -- (3.4,-3.9);
\draw[line width=0.5mm] (3.5,-2) -- (2.5,-2);
\draw[line width=0.5mm] (3.5,-5) -- (2.5,-5);
\draw[line width=0.5mm] (3.5,-6) -- (2.5,-6);
\draw[line width=1mm] (3.5,-6.5) --(3.5,-1.5);
\node[text=black] at (4,-4) {=};
\draw[line width=1mm] (4.5,-6.5) --(4.5,-1.5);
\draw[line width=0.5mm] (4.5,-5) -- (5.5,-4);
\draw[line width=0.5mm] (4.5,-4) -- (5.5,-5);
\draw[line width=1mm, black] (4.6,-4.9) -- (5.4,-4.1);
\draw[line width=1mm, black] (4.6,-4.1) -- (5.4,-4.9);
\draw[line width=0.5mm] (5.5,-2) -- (4.5,-2);
\draw[line width=0.5mm] (5.5,-3) -- (4.5,-3);
\draw[line width=0.5mm] (5.5,-6) -- (4.5,-6);
\draw[line width=0.5mm] (5.5,-4) -- (6.5,-3);
\draw[line width=0.5mm] (5.5,-3) -- (6.5,-4);
\draw[line width=1mm, black] (5.6,-3.9) -- (6.4,-3.1);
\draw[line width=1mm, black] (5.6,-3.1) -- (6.4,-3.9);
\draw[line width=0.5mm] (6.5,-2) -- (5.5,-2);
\draw[line width=0.5mm] (6.5,-5) -- (5.5,-5);
\draw[line width=0.5mm] (6.5,-6) -- (5.5,-6);
\draw[line width=0.5mm] (6.5,-3) -- (7.5,-2);
\draw[line width=0.5mm] (6.5,-2) -- (7.5,-3);
\draw[line width=1mm, black] (6.6,-2.9) -- (7.4,-2.1);
\draw[line width=1mm, black] (6.6,-2.1) -- (7.4,-2.9);
\draw[line width=0.5mm] (7.5,-4) -- (6.5,-4);
\draw[line width=0.5mm] (7.5,-5) -- (6.5,-5);
\draw[line width=0.5mm] (7.5,-6) -- (6.5,-6);
\draw[line width=1mm] (7.5,-6.5) --(7.5,-1.5);
\node[text=black] at (8,-4) {+};
\draw[line width=1mm] (8.5,-6.5) --(8.5,-1.5);
\draw[line width=0.5mm] (8.5,-5) -- (9.5,-4);
\draw[line width=0.5mm] (8.5,-4) -- (9.5,-5);
\draw[line width=1mm, black] (8.6,-4.9) -- (9.4,-4.1);
\draw[line width=1mm, black] (8.6,-4.1) -- (9.4,-4.9);
\draw[line width=0.5mm] (9.5,-2) -- (8.5,-2);
\draw[line width=0.5mm] (9.5,-3) -- (8.5,-3);
\draw[line width=0.5mm] (9.5,-6) -- (8.5,-6);
\draw[line width=0.5mm] (9.5,-6) -- (10.5,-5);
\draw[line width=0.5mm] (9.5,-5) -- (10.5,-6);
\draw[line width=1mm, black] (9.6,-5.9) -- (10.4,-5.1);
\draw[line width=1mm, black] (9.6,-5.1) -- (10.4,-5.9);
\draw[line width=0.5mm] (10.5,-2) -- (9.5,-2);
\draw[line width=0.5mm] (10.5,-3) -- (9.5,-3);
\draw[line width=0.5mm] (10.5,-4) -- (9.5,-4);
\draw[line width=0.5mm] (10.5,-4) -- (11.5,-3);
\draw[line width=0.5mm] (10.5,-3) -- (11.5,-4);
\draw[line width=1mm, black] (10.6,-3.9) -- (11.4,-3.1);
\draw[line width=1mm, black] (10.6,-3.1) -- (11.4,-3.9);
\draw[line width=0.5mm] (11.5,-2) -- (10.5,-2);
\draw[line width=0.5mm] (11.5,-5) -- (10.5,-5);
\draw[line width=0.5mm] (11.5,-6) -- (10.5,-6);
\draw[line width=1mm] (11.5,-6.5) --(11.5,-1.5);
\node[text=black] at (12,-4) {+};
\draw[line width=1mm] (12.5,-6.5) --(12.5,-1.5);
\draw[line width=0.5mm] (12.5,-5) -- (13.5,-4);
\draw[line width=0.5mm] (12.5,-4) -- (13.5,-5);
\draw[line width=1mm, black] (12.6,-4.9) -- (13.4,-4.1);
\draw[line width=1mm, black] (12.6,-4.1) -- (13.4,-4.9);
\draw[line width=0.5mm] (13.5,-2) -- (12.5,-2);
\draw[line width=0.5mm] (13.5,-3) -- (12.5,-3);
\draw[line width=0.5mm] (13.5,-6) -- (12.5,-6);
\draw[line width=0.5mm] (13.5,-4) -- (14.5,-3);
\draw[line width=0.5mm] (13.5,-3) -- (14.5,-4);
\draw[line width=1mm, black] (13.6,-3.9) -- (14.4,-3.1);
\draw[line width=1mm, black] (13.6,-3.1) -- (14.4,-3.9);
\draw[line width=0.5mm] (14.5,-2) -- (13.5,-2);
\draw[line width=0.5mm] (14.5,-5) -- (13.5,-5);
\draw[line width=0.5mm] (14.5,-6) -- (13.5,-6);
\draw[line width=0.5mm] (14.5,-6) -- (15.5,-5);
\draw[line width=0.5mm] (14.5,-5) -- (15.5,-6);
\draw[line width=1mm, black] (14.6,-5.9) -- (15.4,-5.1);
\draw[line width=1mm, black] (14.6,-5.1) -- (15.4,-5.9);
\draw[line width=0.5mm] (15.5,-2) -- (14.5,-2);
\draw[line width=0.5mm] (15.5,-3) -- (14.5,-3);
\draw[line width=0.5mm] (15.5,-4) -- (14.5,-4);
\draw[line width=1mm] (15.5,-6.5) --(15.5,-1.5);

\end{tikzpicture}}}
\caption{Merge of reduced decompositions}
\label{fig:merge}
\end{figure}

\nextsubsection
\subsection{Differential operators $\mydiff$ and $\nabla$}
\label{sec:diff}

Using an approach similar to Proposition~\ref{pr:equality}, we can construct a differential operator for a back stable Schubert polynomial.
Define the operator
 $\mydiff:\ \Lambda \otimes \bZ[x_i,\ i\in \bZ]\to \Lambda \otimes \bZ[x_i,\ i\in \bZ]$ as follows:
$$\mydiff(f)= \sum_{\gamma\in \bZ_{\geq0}^\bZ} (\lim_{k\to -\infty}\text{coef. of $x^{\gamma}x_k$ in $f$}) \cdot  x^{\gamma}.$$
It is easy to see that $\mydiff$ satisfies the Leibniz rule, i.e., $\mydiff(fg)=(\mydiff f)g+f(\mydiff g)$. We can write the explicit formula for this operator action on a back stable Schubert polynomial.

\begin{proposition} The operator $\mydiff$ satisfies the following formula:
$$\mydiff \BSch_u=\sum_{k:\ \ell(s_ku)=\ell(u)-1}\BSch_{s_ku}.$$
Furthermore, the operator satisfies the Leibniz rule, i.e., 
$$\mydiff(\BSch_u\BSch_v)=(\mydiff\BSch_u)\BSch_v+\BSch_u(\mydiff\BSch_v).$$
\end{proposition}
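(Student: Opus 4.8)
The plan is to compute the coefficient of $x^\gamma x_k$ in $\BSch_u$ for $k$ far negative directly, using the back stable analogue of the compatible--sequence formula of Theorem~\ref{stanley-formula}. First I would record that for $w\in\PermutZ$ the polynomial $\BSch_w$ expands as $\sum_{h\in R(w)}\sum_{\alpha\in C(h)}x_{\alpha_1}\cdots x_{\alpha_{\ell(w)}}$, where now $C(h)$ consists of the weakly increasing integer sequences $\alpha_1\le\cdots\le\alpha_{\ell}$ subject to $\alpha_j\le h_j$ and $\alpha_j<\alpha_{j+1}$ whenever $h_j<h_{j+1}$; this follows from Theorem~\ref{stanley-formula} by passing to the limit under $\shift$ exactly as in the definition of $\BSch_w$, and it is the same bookkeeping already used in the proof of Proposition~\ref{pr:equality}.

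The heart of the argument is a local computation. Fix an exponent vector $\gamma\in\bZ_{\ge0}^{\bZ}$ and let $k$ be smaller than every index in the support of $\gamma$. Then a compatible sequence contributing the monomial $x^\gamma x_k$ must use the value $k$ exactly once, and since $k$ is the minimum while $\alpha$ is weakly increasing, that value must be $\alpha_1$. Deleting $\alpha_1=k$ sends such sequences bijectively to the compatible sequences of content $x^\gamma$ for the truncated word $(h_2,\dots,h_{\ell})$: all inherited constraints are immediate, and the one genuinely new constraint, $\alpha_1<\alpha_2$ when $h_1<h_2$, holds automatically because $k$ lies far below $\alpha_2$. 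Crucially, the resulting count does not involve $k$, so the limit defining $\mydiff$ exists and equals this stabilized value.

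Next I would reorganize the sum over $R(u)$ by the first letter. For a fixed value $a$, the truncations $(h_2,\dots,h_\ell)$ of reduced words $h\in R(u)$ with $h_1=a$ range exactly over $R(s_a u)$, and such $h$ exist precisely when $a$ is a left descent, i.e.\ $\ell(s_au)=\ell(u)-1$. Hence the coefficient of $x^\gamma x_k$ in $\BSch_u$ equals the sum over $a$ with $\ell(s_au)=\ell(u)-1$ of the coefficient of $x^\gamma$ in $\BSch_{s_au}$. Summing the stabilized identity $\lim_{k\to-\infty}(\text{coef. of }x^\gamma x_k\text{ in }\BSch_u)=\sum_{a:\,\ell(s_au)=\ell(u)-1}(\text{coef. of }x^\gamma\text{ in }\BSch_{s_au})$ over all $\gamma$ yields the desired formula $\mydiff\BSch_u=\sum_{k:\,\ell(s_ku)=\ell(u)-1}\BSch_{s_ku}$.

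Finally, the Leibniz rule for $\BSch_u\BSch_v$ is a special case of the general Leibniz property of $\mydiff$ noted just before the statement: in a product $fg$ a single factor $x_k$ with $k$ far negative must come entirely from $f$ or entirely from $g$, so extracting the coefficient of $x^\gamma x_k$ splits as a sum over the two factors, and passing to the limit gives $(\mydiff f)g+f(\mydiff g)$; taking $f=\BSch_u$ and $g=\BSch_v$ (a finite combination of $\BSch_w$'s, hence in the domain where the limits stabilize) finishes it. I expect the main obstacle to be the careful justification of the back stable compatible--sequence formula over $\bZ$ together with the stabilization in $k$ of the coefficient of $x^\gamma x_k$; once that bookkeeping is in place, the descent bijection between reduced words of $u$ beginning with $a$ and reduced words of $s_au$, and the whole computation, are routine.
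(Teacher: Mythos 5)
Your proposal is correct and follows essentially the same route as the paper, which simply observes that the Leibniz rule holds for the coefficient-extraction definition of $\mydiff$ and that the explicit formula ``immediately follows'' from the definition of back stable Schubert polynomials together with Theorem~\ref{stanley-formula}. You have merely written out the details the paper leaves implicit (the back stable compatible-sequence expansion, the stabilization of the coefficient of $x^\gamma x_k$, and the first-letter bijection $R(u)\vert_{h_1=a}\to R(s_au)$ for left descents $a$), and those details are all sound.
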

\begin{proof}
As it was mentioned above, $\mydiff$ satisfies the product formula.
The first part of the Theorem immediately follows from the definition of back stable Schubert polynomials together with  Theorem~\ref{stanley-formula}. 
\end{proof}

For the original Schubert polynomials, another differential operator was constructed by R.\,Stanley~\cite{St-nabla}, see also~\cite{HPSW}, 
which can be easily extended for $\BSch$. 
Define the operator $\nabla$ as follows:
$$\nabla \BSch_u=\sum_{k:\ \ell(s_ku):=\ell(u)-1}k \BSch_{s_ku}.$$

\begin{proposition} The operator $\nabla$ satisfies the Leibniz rule, i.e., 
$$\nabla(\BSch_u\BSch_v)=(\nabla\BSch_u)\BSch_v+\BSch_u(\nabla\BSch_v).$$
\end{proposition}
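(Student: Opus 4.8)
The plan is to exhibit $\nabla$ as an honest derivation of the ambient ring $\Lambda\otimes\bQ[x_i,\ i\in\bZ]$, so that the Leibniz rule holds by construction, and then to match that derivation with the combinatorial formula $\nabla\BSch_u=\sum_{k}k\BSch_{s_ku}$. This mirrors the treatment of $\mydiff$: there the Leibniz rule was automatic because $\mydiff$ peels off a single very negative variable, which in a product must be supplied entirely by one factor. The obstruction to copying that argument verbatim for $\nabla$ is that the naive analogue — peeling $x_k$ with weight $k$ and letting $k\to-\infty$ — diverges, since the extracted coefficients stabilize to a nonzero limit while the weight blows up. So one needs a genuinely finite realization of $\nabla$.

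First I would pin down the action of $\nabla$ on algebra generators. On the individual variables one computes directly from the definition, using $x_i=\BSch_{s_i}-\BSch_{s_{i-1}}$, that $\nabla(x_i)=i-(i-1)=1$ for every $i\in\bZ$. On the symmetric factor $\Lambda$, the restriction of $\nabla$ is the content-weighted box-removal operator of \S4, namely $\nabla\,s_\lambda=\sum_{\mu=\lambda-\square}(j-i)\,s_\mu$; rewriting power sums through Schur functions gives $\nabla(p_k)=k\,p_{k-1}$ with the convention $p_0=0$ (e.g.\ $\nabla p_1=0$, $\nabla p_2=2p_1$). I would then let $D$ be the unique derivation of $\Lambda\otimes\bQ[x_i,\ i\in\bZ]$ determined by $D(x_i)=1$ and $D(p_k)=k\,p_{k-1}$ on these generators. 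Being prescribed on generators of a polynomial algebra, $D$ is a well-defined derivation and hence satisfies $D(fg)=(Df)g+f(Dg)$; in particular $D(\BSch_u\BSch_v)=(D\BSch_u)\BSch_v+\BSch_u(D\BSch_v)$. Thus the proposition reduces to proving the identity $D=\nabla$ on the basis $\{\BSch_u\}$.

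The identification $D=\nabla$ is the main obstacle. To carry it out I would study the difference $m_u:=D\BSch_u-\nabla\BSch_u$ and use the shift $\shift$, which acts on the ring as the algebra automorphism $x_i\mapsto x_{i+1}$ with $\shift(p_k)=p_k+x_1^k$ and satisfies $\shift\BSch_w=\BSch_{\shift w}$. A short computation on generators shows $D\shift=\shift(D+\mydiff)$, and combining this with the corresponding relation $\nabla\BSch_{\shift u}=\shift\nabla\BSch_u+\shift\mydiff\BSch_u$ (which follows from the combinatorial definition together with $\mydiff\BSch_{\shift u}=\shift\mydiff\BSch_u$) yields the covariance $m_{\shift u}=\shift\,m_u$, and hence $m_{\shift^{-n}u}=\shift^{-n}m_u$. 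Pushing the permutation far toward $-\infty$ sends $\BSch_{\shift^{-n}u}$ to its symmetric part, the Stanley symmetric function $\F_u\in\Lambda$, where by Edelman--Greene $\F_u=\sum_\lambda a_{\lambda,u}s_\lambda$ and both $D$ and $\nabla$ act as the content operator of \S4; so the difference vanishes in the symmetric limit. Since $\shift$ is injective, propagating this back through the covariance forces $m_u=0$. The delicate point — the genuine obstacle — is making this limiting/symmetrization argument rigorous, i.e.\ controlling the convergence of $\BSch_{\shift^{-n}u}$ to $\F_u$ well enough that vanishing in the limit implies vanishing of $m_u$ itself. Conceptually, the cleanest explanation for why such an argument should succeed is that $\Lambda\otimes\bQ[x_i,\ i\in\bZ]$ is a graded connected Hopf algebra and $\nabla$ is the contraction of its coproduct against a functional supported in degree one; every degree-one functional is primitive, and contraction against a primitive is automatically a derivation.
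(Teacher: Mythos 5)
Your reduction to ``define a derivation $D$ on generators and then show $D=\nabla$'' is sound in its first half: the computations $\nabla(x_i)=1$ and $\nabla(p_k)=kp_{k-1}$ are correct, and a map prescribed on the polynomial generators $\{x_i\}\cup\{p_k\}$ of $\Lambda\otimes\bQ[x_i,\ i\in\bZ]$ and extended by the Leibniz rule is automatically a derivation. But this relocates the entire content of the proposition into the identification $D\BSch_u=\nabla\BSch_u$, and that step is not established. The covariance $m_{\shift u}=\shift\,m_u$ together with ``vanishing in the symmetric limit'' does not force $m_u=0$: the coefficientwise limit of $\shift^{-n}y$ as $n\to\infty$ can vanish for a nonzero $y$ (already $y=x_j$ gives $\shift^{-n}y=x_{j-n}$, whose coefficient on any fixed monomial is eventually $0$), so a nonzero discrepancy supported away from the symmetric part would pass undetected by the proposed limit; injectivity of $\shift$ buys nothing since $\shift$ is an automorphism. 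What you actually need is that $D\BSch_u$ lies in the span of the $\BSch_{s_ku}$ with coefficients $k$ --- but that is precisely the Stanley/Hamaker--Pechenik--Speyer--Weigandt statement that $\sum_i\partial/\partial x_i$ acts on Schubert polynomials by $\Sch_w\mapsto\sum_k k\,\Sch_{s_kw}$, transported to the back stable setting. In other words, you are implicitly re-proving a harder known theorem and omitting its proof.

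The paper's route is much shorter and sidesteps this: fix $u,v$, apply $\shift^k$ for $k$ large enough that all permutations involved lie in $\PermutN$, invoke the already-known Leibniz rule for $\nabla$ on ordinary Schubert polynomials (Stanley, HPSW), and shift back. Conjugating $\nabla$ by $\shift^k$ yields $\nabla+k\mydiff$, which therefore satisfies the Leibniz rule; since $\mydiff$ is already a derivation, so is $\nabla$. If you want to keep your derivation-on-generators framework, the honest fix is to prove $D\BSch_u=\sum_k k\,\BSch_{s_ku}$ by exactly this reduction to the ordinary case (or by citing HPSW), at which point the limiting argument becomes unnecessary.
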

\begin{proof}
Fix a pair of permutations $u, v\in \PermutZ$ and choose a sufficiently large $k$. We can make $k$ shifts $\shift$ and use $\nabla$ for regular Schubert polynomials and then do again $k$ shifts $\shift^{-1}$. Therefore, the operator $\nabla+k\mydiff$ satisfies the Leibniz rule, and hence so does $\nabla$. 
\end{proof}

As me mention above, one can do a shift $\shift$ and get a new operator $\widetilde{\nabla}=\nabla+\mydiff$. In fact, all such linear operators are linear combinations of $\mydiff$ and $\nabla$.

\begin{theorem} If an operator $\zeta$ satisfies:
\begin{enumerate}
\item $\zeta \BSch_u=\sum_{k:\ \ell(s_ku)=\ell(u)-1}a_{u,k}\BSch_{s_ku},\ a_{u,k}\in \bQ$;
\item $\zeta(\BSch_u\BSch_v)=(\zeta\BSch_u)\BSch_v+\BSch_u(\zeta\BSch_v)$,
\end{enumerate}
then $\zeta$ is a linear combination of $\mydiff$ and $\nabla$.
\end{theorem}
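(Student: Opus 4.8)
The plan is to reduce the statement to the already-understood symmetric (Schur) directions and then bootstrap from $\Lambda$ to the whole ring. Write $\zeta\BSch_u=\sum_k a_{u,k}\BSch_{s_ku}$ as in hypothesis (1). Since $c_1\mydiff+c_2\nabla$ sends $\BSch_u$ to $\sum_k(c_1+c_2k)\BSch_{s_ku}$, the claim is exactly that there exist constants with $a_{u,k}=c_1+c_2k$ for all $u$ and all relevant $k$; equivalently, that the operator $\eta:=\zeta-c_1\mydiff-c_2\nabla$ (which is a type-(1) derivation for every choice of $c_1,c_2$, by linearity of the two hypotheses) can be made zero.

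First I would fix $c_1,c_2$ on the symmetric part. The key observation is that $\zeta$ preserves $\Lambda$: every length-reducing left cover $s_ku$ of a Grassmannian permutation $u=w_\lambda$ is again Grassmannian, because swapping the values $k,k+1$ keeps both increasing runs increasing and does not create a second descent. Hence for Grassmannian $u$ each $\BSch_{s_ku}$ appearing in $\zeta\BSch_u$ is a Schur function, so $\zeta|_\Lambda$ is a degree $-1$ derivation of the Schur ring $\bY$ that removes a single box with some weight. Under the identification of \S4 of $\mydiff$ and $\nabla$ with the box-removing operators on $\bY$, this is exactly the Schur-function case treated there; alternatively, a short induction via Pieri's rule $s_{(1)}s_\lambda=\sum_{\lambda^+}s_{\lambda^+}$ shows the weight of a removed box must be an affine function $c_1+c_2\cdot(\text{content})$ of its content. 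Either way we obtain unique $c_1,c_2$ with $\zeta|_\Lambda=c_1\mydiff|_\Lambda+c_2\nabla|_\Lambda$, so that $\eta=\zeta-c_1\mydiff-c_2\nabla$ is a type-(1) derivation with $\eta|_\Lambda=0$.

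Next I would exploit $\Lambda$-linearity. Since $\eta$ is a derivation vanishing on $\Lambda$, the Leibniz rule gives $\eta(fg)=f\,\eta(g)$ for $f\in\Lambda$, i.e.\ $\eta$ is $\Lambda$-linear. As $R=\Lambda\otimes\bQ[x_i:i\in\bZ]$ is generated as a $\Lambda$-algebra by the variables $x_i$, it suffices to prove $\eta(x_i)=0$ for every $i$. Here I would pass back to the Schubert basis: each $x_i$ lies in the degree-one span of the classes $\BSch_{s_k}$, so $\eta(x_i)$ is a constant determined by the numbers $\eta\BSch_{s_k}$, and (using that $R$ is free over $\Lambda$ on the ordinary Schubert polynomials) the remaining unknowns are the values $\eta\BSch_w$ for ordinary, finite Schubert classes. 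These I would kill by induction on $\ell(w)$, applying $\eta$ to Monk-type products $\BSch_{s_a}\BSch_v$ that contain $\BSch_w$; by the inductive hypothesis and $\Lambda$-linearity the right-hand side of the Leibniz identity is already known, which turns each such product into a linear relation among the values $\eta\BSch_{w'}$ with $\ell(w')=\ell(w)$.

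The main obstacle is precisely this last coupling: multiplying by a degree-one class raises length, so expressing $\BSch_w$ through products of lower classes forces in other Schubert classes of the same length, and a single Leibniz identity only relates the unknowns $\{\eta\BSch_{w'}\}_{\ell(w')=\ell(w)}$ rather than pinning them down. To break it I would use the full (multiplicity-free) back stable Monk's rule of Lam--Lee--Shimozono together with a suitable linear order on permutations making the resulting system unitriangular, so that the diagonal term $\BSch_w$ is the unique maximal contribution, and then read off $\eta\BSch_w=0$ term by term. Shift-covariance ($\mydiff$ commutes with the shift $\shift$, while $\shift$ conjugates $\nabla$ into $\nabla+\mydiff$) can be used both to reduce the verification to finitely many residues of $k$ and to organize the base cases. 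Carrying out the triangularity estimate for back stable Monk's rule, and checking that it is compatible with the $\Lambda$-linear reduction, is where the real work lies.
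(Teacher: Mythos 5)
The first half of your argument is correct and coincides with the paper's opening move: $\zeta$ preserves $\Lambda$ because the length-reducing left covers of a Grassmannian permutation are the Grassmannian permutations of the same descent obtained by removing a corner box, so Theorem~\ref{schur:lin} lets you subtract $c_1\mydiff+c_2\nabla$ and assume the resulting derivation $\eta$ vanishes on $\Lambda$. The genuine gap is the final step, which you yourself defer as ``the real work.'' As sketched, the Monk-rule induction cannot even get started: the degree-one classes $\BSch_{s_k}$, $k\neq 0$, are not products of strictly smaller classes, so no Leibniz identity with both factors of smaller length reaches them; and the identities you do obtain, e.g.\ comparing $\eta(\BSch_{s_0}\BSch_{s_k})=\BSch_{s_0}\,\eta(\BSch_{s_k})$ with the expansion of $\eta(\BSch_{s_0s_k})$ guaranteed by hypothesis~(1), merely equate unknown coefficients with one another, because everything Leibniz produces still lies in the span of the permitted left covers. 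No triangularity of Monk's rule repairs this by itself.

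The missing idea, which is the heart of the paper's proof, is to use the support restriction in hypothesis~(1) \emph{negatively}: one must find a product where the Leibniz rule forces $\eta$ of some class to contain a basis element that is not among its left covers. The paper takes $\BSch_{[1,(k)]}\BSch_{[0,(1)]}=\BSch_{[0,(k+1)]}+\BSch_{[1,(k,1)]}$ (with $[d,(\lambda)]$ the descent-$d$ Grassmannian permutation of shape $\lambda$): writing $\eta(\BSch_{[1,(k)]})=a\BSch_{[1,(k-1)]}$, Leibniz gives $\eta(\BSch_{[1,(k,1)]})=a\BSch_{[0,(k)]}+a\BSch_{[1,(k-1,1)]}$, and since $[0,(k)]$ is not a length-reducing left cover of $[1,(k,1)]$, hypothesis~(1) forces $a=0$; multiplicativity of the $\BSch_{[1,(k)]}$ then kills all descent-$1$ Grassmannians, the same works for every descent, and the Grassmannian classes generate the ring. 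Alternatively, your own $\Lambda$-linear reduction can be closed without Monk's rule: each $\eta(x_i)=\eta(\BSch_{s_i})-\eta(\BSch_{s_{i-1}})$ is a constant $c_i$, and $p_{2,k}=\BSch_{[k,(2)]}-\BSch_{[k,(1,1)]}$ has, by hypothesis~(1), image $\eta(p_{2,k})\in\bQ\,\BSch_{s_k}$ (each of those two classes has $s_k$ as its unique left cover), while the derivation property gives $\eta(p_{2,k})=\eta(p_{2,0})+2\sum_{i=1}^{k}c_ix_i=2\sum_{i=1}^{k}c_ix_i$ for $k\ge 1$; comparing the coefficient of $x_0$ forces both sides to vanish, so $c_i=0$ there, and the symmetric computation handles $k\le -1$. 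Either route must actually be carried out; as written your proof is incomplete.
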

The proof is based on section~\S4, which we suggest to read before.
\begin{proof}
Any differential operator on Grassmannian permutations can be expressed as a linear combination of $\mydiff$ and $\nabla$, see Theorem~\ref{schur:lin}. Therefore without loss of generality,
we can assume that $\zeta(\BSch(w))=0$ for any Grassmannian permutation $w$ of descent $0$.

We will use the following equation: $$\BSch_{[1,(k)]}\BSch_{[0,(1)]}=\BSch_{[0,(k+1)]}+\BSch_{[1,(k,1)]},$$
where $[k,(\lambda)]$ is a Grassmannian permutation of descent $k$ corresponding to $\lambda$.
We have $\zeta(\BSch_{[1,(k)]})=a\BSch_{[1,(k-1)]}$ for some rational $a$, hence,
$$\zeta(\BSch_{[1,(k)]}\BSch_{[0,(1)]})=a\BSch_{[0,(k)]}+a\BSch_{[1,(k-1,1)]}.$$
This gives us
$$\zeta(\BSch_{[1,(k,1)]})=a\BSch_{[0,(k)]}+a\BSch_{[1,(k-1,1)]},$$
whence $a=0$. Since $\BSch_{[1,(k)]}$ forms a multiplicative basis for the subring generated by Grassmannian permutations of descent $1$, we get that $\zeta(\BSch(w))=0$ for any Grassmanian permutation $w$ of descent $1$. Similarly, we can show the same holds for all positive and negative descents. 
We get that $\zeta(\BSch(w))=0$ for any Grassmanian permutation, and thus for all of them.
\end{proof}

\nextsection
\section{New point of view on Schur polynomials}
\label{sec:newpoint}

Here we will work with Schur functions as symmetric polynomials in $\{x_i:\ i\in \bZ_{\leq 0}\};$ the  assumption of $i$ being non-positive allows us to keep the notations consistent in the rest of the paper. 
\nextsubsection
\subsection{From Schubert to Schur}  A descent (ascent) of $w\in \PermutZ$ is a position $i\in Z$ with $w(i)>w(i+1)$ ($w(i)<w(i+1)$). It is well known that Schubert polynomials are symmetric in $x_i$ and $x_{i+1}$ if and only if $i$ is  an ascent (and thus, the same holds for back stable Schubert polynomials). 

A permutation is a Grassmannian permutation if and only if it has at most one descent.  
It is easy to see that if $k$ is a unique descent of $w\in \PermutZ$, then $\BSch$ is a symmetric function in $\{x_i,\ i\leq k\}$; they correspond to Schur functions. A Grassmannian permutation $u\in \PermutZ$ of descent $k$ determines a Young diagram
$$\lambda(u)=(u_k-k, u_{k-1}-k+1,u_{k-2}-k+2,\ldots).$$
We have the following equality 
 $$\Schur_{\lambda(u)}(x_i, i\in(-\infty,k])=\BSch_u.$$
 There are different definitions of Schur polynomials, see~\cite{Ful}. 
 
\begin{remark}\label{red-to-Schur} We can easily construct the Young diagram corresponding to a Grassmannian permutation. At first, we consider the wiring diagram of any reduced decomposition of a Grassmannian permutation. Then we rotate and mirror the picture. All the intersections together form the Young diagram (see fig.~\ref{young-inter}).
\begin{figure}[htb!]
\centering
\begin{tabular}{cc}
	\scalebox{0.4}{\hidefigure{
\begin{tikzpicture}

\draw[line width=1mm] (0.5,-0.5) --(0.5,6.5);
\node[text=black] at (0,6) {2};
\node[text=black] at (0,5) {5};
\node[text=black] at (0,4) {7};
\node[text=black] at (0,3) {1};
\node[text=black] at (0,2) {3};
\node[text=black] at (0,1) {4};
\node[text=black] at (0,0) {6};
\draw[line width=1mm] (8.5,-0.5) --(8.5,6.5);
\node[text=black] at (9,6) {1};
\node[text=black] at (9,5) {2};
\node[text=black] at (9,4) {3};
\node[text=black] at (9,3) {4};
\node[text=black] at (9,2) {5};
\node[text=black] at (9,1) {6};
\node[text=black] at (9,0) {7};
\draw[line width=0.5mm] (7.5,5) -- (8.5,6);
\draw[line width=0.5mm] (7.5,6) -- (8.5,5);
\draw[line width=1mm, violet] (7.6,5.1) -- (8.4,5.9);
\draw[line width=1mm, violet] (7.6,5.9) -- (8.4,5.1);
\draw[line width=0.5mm] (8.5,4) -- (7.5,4);
\draw[line width=0.5mm] (8.5,3) -- (7.5,3);
\draw[line width=0.5mm] (8.5,2) -- (7.5,2);
\draw[line width=0.5mm] (8.5,1) -- (7.5,1);
\draw[line width=0.5mm] (8.5,0) -- (7.5,0);
\draw[line width=0.5mm] (6.5,2) -- (7.5,3);
\draw[line width=0.5mm] (6.5,3) -- (7.5,2);
\draw[line width=1mm, violet] (6.6,2.1) -- (7.4,2.9);
\draw[line width=1mm, violet] (6.6,2.9) -- (7.4,2.1);
\draw[line width=0.5mm] (7.5,6) -- (6.5,6);
\draw[line width=0.5mm] (7.5,5) -- (6.5,5);
\draw[line width=0.5mm] (7.5,4) -- (6.5,4);
\draw[line width=0.5mm] (7.5,1) -- (6.5,1);
\draw[line width=0.5mm] (7.5,0) -- (6.5,0);
\draw[line width=0.5mm] (5.5,0) -- (6.5,1);
\draw[line width=0.5mm] (5.5,1) -- (6.5,0);
\draw[line width=1mm, violet] (5.6,0.1) -- (6.4,0.9);
\draw[line width=1mm, violet] (5.6,0.9) -- (6.4,0.1);
\draw[line width=0.5mm] (6.5,6) -- (5.5,6);
\draw[line width=0.5mm] (6.5,5) -- (5.5,5);
\draw[line width=0.5mm] (6.5,4) -- (5.5,4);
\draw[line width=0.5mm] (6.5,3) -- (5.5,3);
\draw[line width=0.5mm] (6.5,2) -- (5.5,2);
\draw[line width=0.5mm] (4.5,3) -- (5.5,4);
\draw[line width=0.5mm] (4.5,4) -- (5.5,3);
\draw[line width=1mm, violet] (4.6,3.1) -- (5.4,3.9);
\draw[line width=1mm, violet] (4.6,3.9) -- (5.4,3.1);
\draw[line width=0.5mm] (5.5,6) -- (4.5,6);
\draw[line width=0.5mm] (5.5,5) -- (4.5,5);
\draw[line width=0.5mm] (5.5,2) -- (4.5,2);
\draw[line width=0.5mm] (5.5,1) -- (4.5,1);
\draw[line width=0.5mm] (5.5,0) -- (4.5,0);
\draw[line width=0.5mm] (3.5,1) -- (4.5,2);
\draw[line width=0.5mm] (3.5,2) -- (4.5,1);
\draw[line width=1mm, violet] (3.6,1.1) -- (4.4,1.9);
\draw[line width=1mm, violet] (3.6,1.9) -- (4.4,1.1);
\draw[line width=0.5mm] (4.5,6) -- (3.5,6);
\draw[line width=0.5mm] (4.5,5) -- (3.5,5);
\draw[line width=0.5mm] (4.5,4) -- (3.5,4);
\draw[line width=0.5mm] (4.5,3) -- (3.5,3);
\draw[line width=0.5mm] (4.5,0) -- (3.5,0);
\draw[line width=0.5mm] (2.5,4) -- (3.5,5);
\draw[line width=0.5mm] (2.5,5) -- (3.5,4);
\draw[line width=1mm, violet] (2.6,4.1) -- (3.4,4.9);
\draw[line width=1mm, violet] (2.6,4.9) -- (3.4,4.1);
\draw[line width=0.5mm] (3.5,6) -- (2.5,6);
\draw[line width=0.5mm] (3.5,3) -- (2.5,3);
\draw[line width=0.5mm] (3.5,2) -- (2.5,2);
\draw[line width=0.5mm] (3.5,1) -- (2.5,1);
\draw[line width=0.5mm] (3.5,0) -- (2.5,0);
\draw[line width=0.5mm] (1.5,2) -- (2.5,3);
\draw[line width=0.5mm] (1.5,3) -- (2.5,2);
\draw[line width=1mm, violet] (1.6,2.1) -- (2.4,2.9);
\draw[line width=1mm, violet] (1.6,2.9) -- (2.4,2.1);
\draw[line width=0.5mm] (2.5,6) -- (1.5,6);
\draw[line width=0.5mm] (2.5,5) -- (1.5,5);
\draw[line width=0.5mm] (2.5,4) -- (1.5,4);
\draw[line width=0.5mm] (2.5,1) -- (1.5,1);
\draw[line width=0.5mm] (2.5,0) -- (1.5,0);
\draw[line width=0.5mm] (0.5,3) -- (1.5,4);
\draw[line width=0.5mm] (0.5,4) -- (1.5,3);
\draw[line width=1mm, violet] (0.6,3.1) -- (1.4,3.9);
\draw[line width=1mm, violet] (0.6,3.9) -- (1.4,3.1);
\draw[line width=0.5mm] (1.5,6) -- (0.5,6);
\draw[line width=0.5mm] (1.5,5) -- (0.5,5);
\draw[line width=0.5mm] (1.5,2) -- (0.5,2);
\draw[line width=0.5mm] (1.5,1) -- (0.5,1);
\draw[line width=0.5mm] (1.5,0) -- (0.5,0);
\end{tikzpicture}}}\ \ \ & 
	\ \ \ \ \ \ \scalebox{1.3}{$$\yng(4,3,1)$$ }
	\end{tabular}
\label{young-inter}
\caption{A reduced decomposition of $(2571346)\in \PermutZ$ and the corresponding Young diagram $(4,3,1)$.}
\end{figure}
\end{remark}

The ring $\Lambda$ consists of all symmetric functions in $\{x_i,\ i\leq 0\}$. Thus, $\Lambda$ is generated by a Grassmannian permutation of $0$ descent.
Since $\mydiff$ and $\nabla$ cannot add a new descent, we have that $\mydiff$ and $\nabla$ can be restricted to $\Lambda$. Now we will rewrite differential operators in terms of Young diagrams.

\begin{theorem}\label{operators:to:schur}For any $\lambda\in\Diag$ and $k\in \bZ,$ we have
$$\mydiff(s_\lambda(x_i,i\in(-\infty,k]))=\sum_{\substack{(i,j)\in \bN^2\\ \lambda'=\lambda-(i,j)\in \Diag}} s_{\lambda'}(x_i,i\in(-\infty,k]);$$
and
$$\nabla(s_{\lambda}(x_i,i\in(-\infty,k]))=\sum_{\substack{(i,j)\in \bN^2\\ \lambda'=\lambda-(i,j)\in \Diag}} (j-i+k)s_{\lambda'}(x_i,i\in(-\infty,k]),$$
\end{theorem}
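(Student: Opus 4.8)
The plan is to transport the problem from Young diagrams to Grassmannian permutations via the dictionary $\Schur_\lambda(x_i,\,i\in(-\infty,k])=\BSch_u$, where $u\in\PermutZ$ is the Grassmannian permutation of descent $k$ with $\lambda(u)=\lambda$, and then to read off the action of $\mydiff$ and $\nabla$ directly from their definitions on back stable Schubert polynomials. The essential point is to understand, for such a $u$, exactly which simple reflections $s_m$ (I rename the summation index $k$ of the operator definitions to $m$ to avoid clashing with the descent $k$) satisfy $\ell(s_mu)=\ell(u)-1$, and what the resulting permutation $s_mu$ looks like.

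First I would classify the length-decreasing left multiplications. Recall that $\ell(s_mu)=\ell(u)-1$ iff $u^{-1}(m)>u^{-1}(m+1)$, i.e.\ the value $m$ sits to the right of the value $m+1$ in one-line notation. Since $u$ is increasing on $(-\infty,k]$ and on $[k+1,\infty)$, this happens precisely when $m+1$ is the value at some position $p\le k$ (the ``left'' block) and $m$ is the value at some position $q\ge k+1$ (the ``right'' block). In that case $s_mu$ swaps these two values, lowering $u(p)=m+1$ to $m$ and raising $u(q)=m$ to $m+1$. A short check using the increasing behaviour of $u$ on each block, together with the fact that $m\neq u(p-1)$ and $m+1\neq u(q+1)$, shows that $s_mu$ is again strictly increasing on $(-\infty,k]$ and on $[k+1,\infty)$, hence is again Grassmannian of descent $k$ (the degenerate case where $\lambda'$ is empty and $s_mu$ is the identity is handled directly, as $\BSch_{\mathrm{id}}=1$).

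Next I would translate this into the diagram. Under $\lambda_r=u(k-r+1)-(k-r+1)$, lowering $u(p)$ by one with $p\le k$ and $r=k-p+1$ lowers $\lambda_r$ by one, and the constraints above are exactly the statement that this is the removal of a corner box; thus the length-decreasing reflections $s_m$ are in bijection with the removable boxes $(i,j)$ of $\lambda$, and $\lambda(s_mu)=\lambda-(i,j)$. It then remains to compute $m$ in terms of the box: writing $i=r$, $j=\lambda_r=m+1-p$ and $r=k-p+1$ gives the content $j-i=\lambda_r-r=m-k$, so that $m=(j-i)+k$. Feeding the defining formulas $\mydiff\BSch_u=\sum_m\BSch_{s_mu}$ and $\nabla\BSch_u=\sum_m m\,\BSch_{s_mu}$ through this bijection then yields exactly the two claimed identities.

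The main obstacle is the bijection step and the bookkeeping of indices: one must verify carefully that every length-decreasing $s_m$ keeps $u$ Grassmannian of the \emph{same} descent $k$ (no new descent is created and the descent does not migrate), that conversely every removable corner arises in this way, and that the content identity $m=(j-i)+k$ holds with the correct signs. This last computation is what produces the coefficient $(j-i+k)$ in the $\nabla$ formula, and specializing to $k=0$ recovers the coefficients $3$, $1$, $-2$ displayed in the Example. Everything else is a direct substitution into the definitions of the two operators.
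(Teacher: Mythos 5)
Your argument is correct and is essentially the paper's: the published proof consists of the single sentence ``The proof is trivial by Remark~\ref{red-to-Schur},'' i.e.\ an appeal to exactly the dictionary you use between Grassmannian permutations of descent $k$ and partitions, combined with the defining formulas for $\xi$ and $\nabla$ on back stable Schubert polynomials. You simply carry out the verification the paper leaves implicit --- classifying the length-decreasing left reflections $s_m$ of a Grassmannian permutation, checking that $s_m u$ stays Grassmannian of the same descent, matching these reflections bijectively with removable corners, and computing $m=(j-i)+k$ --- so your write-up is, if anything, more detailed than the paper's.
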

\begin{proof}
The proof is trivial by Remark~\ref{red-to-Schur}.
\end{proof}

Decreasing operators $\mydiff$ and $\nabla$ satisfy the Leibniz property. All such differential operators are linear combinations of $\mydiff$ and $\nabla$:
\begin{theorem} \label{schur:lin}
An operator $\zeta$ given by the formula:

 $$\zeta s_\lambda=\sum_{\substack{(i,j)\in \bN^2\\ \lambda'=\lambda-(i,j)\in \Diag}} a_{\lambda,\lambda'}s_{\lambda'},\ a_{\lambda,\lambda'}\in \bQ$$
 satisfies the Leibniz property
 $$\zeta (s_\lambda s_\mu)=(\zeta s_\lambda)s_\mu+s_\lambda(\zeta s_\mu)$$
 if and only if $\zeta$ is a linear combination of $\mydiff$ and $\nabla$.
\end{theorem}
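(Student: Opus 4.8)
The plan is to prove the two directions separately, the forward implication being immediate and the reverse requiring the bulk of the work. For the ``if'' direction, I would simply note that $\mydiff$ and $\nabla$ each satisfy the Leibniz property (this is exactly the content of the propositions of \S3 restricted to $\Lambda$ via Theorem~\ref{operators:to:schur}), and that a $\bQ$-linear combination of Leibniz operators is again a Leibniz operator of the same single-box-removing shape; hence any $\alpha\mydiff+\beta\nabla$ qualifies. The real content is the converse.

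For the ``only if'' direction, the key structural observation is that $\Lambda$ is a free polynomial ring $\bQ[h_1,h_2,\ldots]$ on the complete homogeneous symmetric functions $h_n=s_{(n)}$. Since $\zeta$ satisfies the Leibniz rule, it is a derivation, hence completely determined by its values on these generators. Because $(n)$ has a single removable box, the hypothesis on the form of $\zeta$ forces $\zeta s_{(n)}=b_n\, s_{(n-1)}$ for scalars $b_n:=a_{(n),(n-1)}\in\bQ$, and so the whole operator is encoded by the sequence $(b_n)_{n\ge 1}$. Note that the box removed from $(n)$ sits at $(1,n)$ with content $n-1$, so that $\mydiff$ corresponds to $b_n=1$ and $\nabla$ to $b_n=n-1$; thus $\alpha\mydiff+\beta\nabla$ is precisely the operator with $b_n=\alpha+\beta(n-1)$. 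It therefore suffices to show that the single-box condition forces $b_n$ to be an affine function of $n$.

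To extract this constraint I would test $\zeta$ on the two-row shapes $(m,2)$ for $m\ge 2$, using Jacobi--Trudi in the form $s_{(m,2)}=h_m h_2-h_{m+1}h_1$. Applying the derivation gives $\zeta s_{(m,2)}=b_m\,h_{m-1}h_2+b_2\,h_m h_1-b_{m+1}\,h_m h_1-b_1\,h_{m+1}$, and expanding each product by Pieri's rule ($h_{m-1}h_2=s_{(m+1)}+s_{(m,1)}+s_{(m-1,2)}$, $h_m h_1=s_{(m+1)}+s_{(m,1)}$, and $h_{m+1}=s_{(m+1)}$) yields a coefficient of $s_{(m+1)}$ equal to $b_m+b_2-b_{m+1}-b_1$. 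The point is that the single row $(m+1)$ is \emph{not} obtained from $(m,2)$ by deleting one box (the only legal deletions give $(m-1,2)$ and $(m,1)$), so the assumed form of $\zeta$ forces this coefficient to vanish, giving the recurrence $b_{m+1}=b_m+(b_2-b_1)$ for all $m\ge 2$. Together with the free values $b_1,b_2$ this solves to $b_n=b_1+(n-1)(b_2-b_1)$ for all $n\ge 1$, which is affine. Hence $\zeta$ and $b_1\mydiff+(b_2-b_1)\nabla$ agree on all generators $h_n$, and being derivations they coincide on all of $\Lambda$, completing the proof.

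I expect the main obstacle to be pinpointing a family of test partitions that is simultaneously large enough to involve every $b_n$ and clean enough that the derivation produces a \emph{forbidden} Schur term with an isolated, easily computed coefficient; the shapes $(m,2)$ accomplish exactly this, but verifying that $s_{(m+1)}$ genuinely lies outside the single-box-removal locus of $(m,2)$ (and that it is the only such anomalous term) is the delicate bookkeeping step. The remaining ingredients---that $\Lambda=\bQ[h_1,h_2,\ldots]$ so a Leibniz operator is a derivation determined by its values on the $h_n$, and the Pieri expansions---are standard and routine.
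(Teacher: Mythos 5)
Your proof is correct. It rests on the same two pillars as the paper's: that $\Lambda=\bQ[h_1,h_2,\ldots]$, so a Leibniz operator of the stated shape is a derivation determined by the scalars $b_n$ with $\zeta h_n=b_nh_{n-1}$, and that the single-box-removal hypothesis forces the coefficient of any ``forbidden'' Schur term to vanish. Where you genuinely differ is in the choice of test elements and the logical organization. The paper first subtracts $b_1\xi+(b_2-b_1)\nabla$ to reduce to the case $b_1=b_2=0$, then shows $\zeta\equiv0$ by induction on $|\lambda|$ using the column shapes: expanding $e_{k+1}=s_{(1^{k+1})}$ as a polynomial in the $h_i$ and applying the induction hypothesis shows $\zeta(e_{k+1})$ is a multiple of $s_{(k)}$, while the hypothesis on the shape of $\zeta$ says it is a multiple of $s_{(1^k)}$; as these partitions differ for $k\ge2$, both multiples vanish, and in particular $b_{k+1}=0$. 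You instead keep the $b_n$ general and use the two-row shapes $(m,2)$ via Jacobi--Trudi and Pieri to isolate the coefficient of the forbidden term $s_{(m+1)}$, obtaining the recurrence $b_{m+1}=b_m+(b_2-b_1)$ and hence the affine formula, which identifies the explicit linear combination without any normalization step; the paper's route is marginally lighter on bookkeeping since it never invokes Pieri. One tiny point to tidy up: the expansion $h_{m-1}h_2=s_{(m+1)}+s_{(m,1)}+s_{(m-1,2)}$ as written requires $m\ge3$ (for $m=2$ the last term must be dropped since $(1,2)$ is not a partition), but the coefficient of $s_{(m+1)}$ --- the only one you use --- is unaffected, so your recurrence does hold for all $m\ge2$ and the argument stands.
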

\begin{proof}
We know that $\mydiff$ and $\nabla$ satisfy the Leibniz property. Furthermore, we have 
$$\mydiff s_{\tiny{\yng(2)}}=\nabla s_{\tiny{\yng(1)}}=0,\ \mydiff s_{\tiny{\yng(1)}}=1,\ \textrm{and}\ \nabla s_{\tiny{\yng(2)}}=s_{\tiny{\yng(1)}};$$
hence,
it remains to prove that if $\zeta s_{\tiny{\yng(2)}}=\zeta s_{\tiny{\yng(1)}}=0,$ then $\zeta\equiv 0$. 

We will prove it by induction on the size of diagrams. The base case is $k\leq 2$.
Since  $\zeta(s_{\tiny{\yng(1)}}^2)=\zeta(s_{\tiny{\yng(2)}})+\zeta(s_{\tiny{\yng(1,1)}})$, we obtain: 
$$\zeta(s_{\tiny{\yng(1)}})=\zeta(s_{\tiny{\yng(2)}})=\zeta(s_{\tiny{\yng(1,1)}})=0.$$

Now we prove the induction step.
Let $\zeta(s_{\lambda})=0$ for all diagrams with at most $k$ boxes. It is well known that partitions $s_{(1)},s_{(2)},\ldots $ form a multiplicative basis, so that $s_{(1^{k+1})}$ can be expressed in terms of $s_{(i)}, i\in \bN$. Hence, $\zeta(s_{(1^{k+1})})=as_{(k)},$ where $a\in \bQ$. On the other hand, $\zeta(s_{(1^{k+1})})=bs_{(1^k)},$ where $b\in \bQ$.
We get that $\zeta(s_{(1^{k+1})})=0$, and similarly we have $\zeta(s_{({k+1})})=0$. Since $s_{(i)}$ is a multiplicative basis, we get $\zeta s_{\lambda}=0$ for all diagrams of size $k+1$, which completes the proof.
\end{proof}

\nextsubsection
\subsection{Proof of the key lemma} 
The key lemma immediately follows from the next proposition.
\begin{proposition}\label{alg:prop}
	Given $X=\sum_{i=1}^k a_i\lambda^{(i)},\ 1\neq\lambda^{(i)}\in \Diag$ and $0\leq a_i\in \bQ$, we can recover~$X$ from $\mydiff(X)$ and $\nabla(X)$.
\end{proposition}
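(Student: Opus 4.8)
The plan is to recover $X$ by repeatedly peeling off its lexicographically largest term, after first reducing to a single degree. Since $\mydiff$ and $\nabla$ both send $\VDiag_n$ into $\VDiag_{n-1}$, the images $\mydiff(X)$ and $\nabla(X)$ decompose along degrees, and reading off homogeneous components reduces the problem to the case where every $\lambda^{(i)}$ has the same number of boxes $n$. Because all $a_i$ are nonnegative there is no cancellation, so for a partition $\mu$ with $n-1$ boxes the coefficient of $\mu$ in $\mydiff(X)$ is $\sum a_\lambda$ summed over those $\lambda\in\operatorname{supp}(X)$ obtained from $\mu$ by adding a single box, while the coefficient of $\mu$ in $\nabla(X)$ is the same sum weighted by the content of the added box. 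In particular $\operatorname{supp}(\mydiff(X))=\{\mu:\mu\subset\lambda\text{ for some }\lambda\in\operatorname{supp}(X)\}$, and this set is nonempty whenever $X\neq0$.

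Next I would order the partitions of $n$ lexicographically, let $\lambda^*$ be the largest one with $a_{\lambda^*}>0$, and set $\mu^*:=\lambda^*-(\text{the box in the bottom row of }\lambda^*)$. The key combinatorial step is to prove that $\mu^*$ is the lexicographically largest partition in $\operatorname{supp}(\mydiff(X))$, which lets us locate $\mu^*$ directly from $\mydiff(X)$ with no prior knowledge of $\lambda^*$. This is a row-by-row comparison: deleting a box from $\lambda^*$ is lex-maximal precisely when the box lies in the bottom row, and deleting a box from any lex-smaller support partition cannot beat $\mu^*$; the only boundary case is the partition $\lambda''$ obtained from $\lambda^*$ by moving its bottom box down into a new row, which reproduces $\mu^*$ rather than exceeding it.

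Having identified $\mu^*$, I would argue that the only partitions $\lambda\supset\mu^*$ carrying positive mass are the (at most two) obtained by adding a box to $\mu^*$ in its lowest addable positions: extending the bottom row (call it $E$, with content $c_E$) or opening a new row (call it $R$, with content $c_R=-m$, where $m$ is the number of rows of $\mu^*$). Adding a box any higher would produce a partition lexicographically larger than $\lambda^*$, hence outside the support. Therefore the coefficient of $\mu^*$ in $\mydiff(X)$ equals $a_E+a_R$, while its coefficient in $\nabla(X)$ equals $c_E a_E+c_R a_R$; since $c_E-c_R=\mu^*_m+1\neq 0$, this $2\times2$ system has a unique solution, yielding $a_E$ and $a_R$ (when $E$ fails to be a valid partition only $R$ occurs and its coefficient is read off directly). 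Subtracting $a_E E+a_R R$ from $X$ leaves a nonnegative element of $\VDiag_n$ with strictly smaller support whose images under $\mydiff$ and $\nabla$ are computable, so induction on the support size completes the recovery.

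The hard part is the lexicographic claim together with the pruning in the third step, namely that only the two lowest additions to $\mu^*$ can carry mass. This requires a careful case analysis around the bottom of $\lambda^*$, in particular the degenerate situations $\lambda^*_\ell=1$ (so that $\mu^*$ has one fewer row) and $\mu^*_{m-1}=\mu^*_m$ (so that $E$ is not a partition at all). It is exactly here that nonnegativity is indispensable: it rules out cancellation, so that lexicographic maximality transfers cleanly between the two adjacent levels of the Young lattice.
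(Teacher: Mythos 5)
Your proposal is correct and follows essentially the same route as the paper: identify the lexicographically maximal diagram in the support of $\xi(X)$, use nonnegativity (no cancellation) to show that only the two lowest box-additions to it can occur in the support of $X$, solve the resulting $2\times 2$ system whose determinant is the nonzero content difference $\mu^*_m+1$, then subtract and iterate. The preliminary reduction to a single degree and the reformulation of the leading term via $\lambda^*$ rather than directly via $\xi(X)$ are harmless variants, and your treatment of the degenerate cases matches the paper's.
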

\begin{proof}
	Consider the lexicographic order on Young diagrams. Choose the maximal diagram $\mu=(\mu_1\geq\ldots\geq\mu_k\neq 0)$ from $\mydiff(X)$. Since $X$ has positive coefficients, we can obtain $\mu$ only from $\mu'=(\mu_1\geq\ldots\geq\mu_k\geq 1)$ or from $\mu''=(\mu_1\geq\ldots\geq\mu_k+1)$, otherwise $\mydiff(X)$ has a diagram $(\mu_1\geq\ldots\geq\mu_i+1\geq\ldots\geq\mu_k-1)$ with non-zero coefficient for some $i$.  If $\mu''$ is not a Young diagram, then we already know the coefficient of $\mu'$ in $X$. In the second case $\mu$ has distinct coefficients in $\nabla(\mu')$ and $\nabla(\mu'')$ ($k$ and $k-\mu_k-1$ respectively) and has the unit coefficient in $\mydiff(\mu')$ and $\mydiff(\mu'')$. Hence, we can recover both coefficients of $\mu'$ and $\mu''$ in $X$.
	
	\begin{figure}[htb!]
\ytableausetup{nosmalltableaux}
  \begin{ytableau}
         \ &   &  &  & &\\
    \ &     \\
      \ & \\
 *(green) \mu'
  \end{ytableau}
  \ \ \ \ \ \  \ \ \ \ \ \ \ \ 
 \begin{ytableau}
\ &   &  &  \\
    \ &    &  &  \\
      \ & & *(green) \mu'' \\
 *(green) \mu'
  \end{ytableau}
\end{figure}
	
	Subtracting these elements from $X$, we get $X'$. The diagrams of $X'$ still have positive coefficients and $\mydiff(X')$ has a smaller leading diagram. Repeat this procedure until we get zero. 
	\end{proof}

The key lemma together with Theorem~\ref{operators:to:schur} prove our first main result.
\begin{corollary}
The ring $\bY$ is well defined and isomorphic to $\Lambda$.
\end{corollary}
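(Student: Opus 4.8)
The plan is to exhibit the obvious candidate isomorphism and use the two previous theorems to check it is a ring map. Since the Schur functions $\{s_\lambda\}$ (in the variables $x_i,\ i\leq 0$) form a $\bQ$-basis of $\Lambda$, the linear extension of $\phi:\lambda\mapsto s_\lambda$, with $\phi(1)=1$, is a vector space isomorphism $\phi:\VDiag\to\Lambda$ carrying $\VDiag_n$ onto the degree-$n$ component of $\Lambda$. First I would specialize Theorem~\ref{operators:to:schur} to $k=0$: there the weight $j-i+k$ becomes exactly the content $j-i$, so $\phi$ intertwines the combinatorial operators on $\VDiag$ with the operators $\mydiff,\nabla$ on $\Lambda$, that is $\phi\circ\mydiff=\mydiff\circ\phi$ and $\phi\circ\nabla=\nabla\circ\phi$.

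Next I would transport the ordinary product of symmetric functions across $\phi$, defining $g(x,y):=\phi^{-1}\bigl(\phi(x)\,\phi(y)\bigr)$ on $\VDiag$, and verify that $g$ is a multiplication map in the sense of the five listed axioms. Bilinearity gives the distributive property; homogeneity of Schur functions together with the grading statement above gives the degree axiom $g(\VDiag_n,\VDiag_m)\subseteq\VDiag_{n+m}$; and $\phi$ restricts to the identity on $\VDiag_0=\bQ$, which yields $g(a,b)=ab$ for scalars. The two Leibniz axioms follow from the intertwining of the previous paragraph combined with the Leibniz rule for $\mydiff$ and $\nabla$ on $\Lambda$ (obtained by restricting the corresponding propositions for back stable Schubert polynomials). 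The only point needing care is the twisted form of the $\nabla$-axiom, $\nabla(g(y,x))=g(\nabla x,y)+g(x,\nabla y)$: this coincides with the untwisted Leibniz rule on $\Lambda$ precisely because the product on $\Lambda$ is commutative, so $g(y,x)=g(x,y)$ and the twist is invisible.

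Having checked the axioms, I would invoke the corollary following the key lemma, which asserts there is at most one multiplication map; hence $g$ is that unique map. This simultaneously proves existence, so the ring $\bY=(\VDiag,g)$ is well defined, and it identifies the product: since $\phi(\lambda)\phi(\mu)=s_\lambda s_\mu=\sum_\nu c_{\lambda,\mu}^\nu s_\nu$ on basis elements, we get $g(\lambda,\mu)=\sum_\nu c_{\lambda,\mu}^\nu\nu$, i.e. $g$ is the Littlewood-Richardson product $*$. Consequently $\phi$ is by construction a ring homomorphism, and being a linear isomorphism it is a ring isomorphism $\bY\xrightarrow{\sim}\Lambda$.

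The step I expect to be the main obstacle is not any single computation but keeping the logical order airtight: the uniqueness of the multiplication map (from the key lemma) must be in hand \emph{before} transporting the product, and one must confirm that all five axioms really hold for $g$ — in particular that the box-count grading on $\VDiag$ matches the degree grading on $\Lambda$ and that the constant-term convention $\mydiff(1)=\nabla(1)=0$ is compatible with $\phi(1)=1$. Once these bookkeeping points are settled, the identification of $g$ with the Littlewood-Richardson product, and hence the isomorphism, is immediate.
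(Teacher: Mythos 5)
Your proposal is correct and follows the same route as the paper, whose entire proof is the one-line remark that the key lemma together with Theorem~\ref{operators:to:schur} yields the result; you have simply expanded that remark by making the transport-of-structure map $\phi:\lambda\mapsto s_\lambda$ and the verification of the five axioms explicit. The details you supply (the $k=0$ specialization, the commutativity handling the twisted $\nabla$-axiom, and invoking the uniqueness corollary) are exactly the intended content.
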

Therefore, below we will work with $\bY$ as defined in the introduction. Nevertheless, we will use standard notations for Schur functions.

\nextsubsection
\subsection{Determinantal formulas}
\begin{theorem}[The first Jacobi-Trudi formula]
For $\lambda=(\lambda_1,\ldots, \lambda_k)\in\Diag$, we have
	$$s_{\lambda}=\det\begin{bmatrix}
h_{\lambda_1} & h_{\lambda_1+1} &  h_{\lambda_1+2} &  \dots &  h_{\lambda_1+k-1}\\
    h_{\lambda_2-1} & h_{\lambda_2} &  h_{\lambda_1+1} &  \dots &  h_{\lambda_1+k-2}\\
    \vdots & \vdots & \vdots & \ddots & \vdots\\
     h_{\lambda_k-k+1} & h_{\lambda_k-k+2} &  h_{\lambda_k-k+3} &  \dots &  h_{\lambda_k}
\end{bmatrix}$$
\end{theorem}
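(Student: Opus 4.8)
The plan is to prove the identity by induction on $n=|\lambda|$, using the key lemma (Lemma~\ref{lemma:key}) to reduce the degree-$n$ statement $s_\lambda=D_\lambda$ (where $D_\lambda$ denotes the right-hand determinant) to two degree-$(n-1)$ statements obtained by applying $\mydiff$ and $\nabla$. Write $M=(h_{\lambda_i-i+j})_{1\le i,j\le k}$ for the Jacobi--Trudi matrix; its entries are single-row Schur functions $h_m=s_{(m)}$ (with $h_0=1$ and $h_m=0$ for $m<0$), and directly from the box-removal definitions of $\mydiff$ and $\nabla$ on $\bY$ the row $(m)$ has a unique removable box, of content $m-1$, so $\mydiff(h_m)=h_{m-1}$ and $\nabla(h_m)=(m-1)h_{m-1}$. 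Since $\mydiff$ and $\nabla$ are derivations of $\bY$, they act on the determinant row by row: for a derivation $\partial$ one has $\partial(\det M)=\sum_i \det M^{[i]}$, where $M^{[i]}$ is $M$ with its $i$-th row replaced by the $\partial$-image of that row. Set $\lambda^{(i)}:=(\lambda_1,\dots,\lambda_i-1,\dots,\lambda_k)$; I note that $D_{\lambda^{(i)}}=0$ whenever $\lambda^{(i)}$ is not a partition (this happens exactly when $\lambda_i=\lambda_{i+1}$, forcing rows $i$ and $i+1$ of the corresponding matrix to coincide), while the removable boxes of $\lambda$ are precisely the rows $i$ with $\lambda_i>\lambda_{i+1}$.

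The $\mydiff$-computation is immediate. Applying $\mydiff$ to row $i$ turns $h_{\lambda_i-i+j}$ into $h_{\lambda_i-i+j-1}$, which is exactly the $i$-th row of the matrix for $\lambda^{(i)}$ (the other rows being unchanged); hence $\mydiff(D_\lambda)=\sum_i D_{\lambda^{(i)}}$. Discarding the vanishing non-partition terms and invoking the induction hypothesis $D_{\lambda^{(i)}}=s_{\lambda^{(i)}}$ yields $\mydiff(D_\lambda)=\sum_{\square}s_{\lambda-\square}=\mydiff(s_\lambda)$, the sum running over removable boxes.

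The $\nabla$-computation is the main obstacle, and the point is to absorb the content weights. Applying $\nabla$ to row $i$ replaces its $j$-th entry by $(\lambda_i-i+j-1)h_{\lambda_i-i+j-1}$; splitting the scalar as $(\lambda_i-i)+(j-1)$ breaks $\det M^{[i]}$ into $(\lambda_i-i)D_{\lambda^{(i)}}+Q_i$, where $Q_i$ is the determinant obtained from $M$ by replacing row $i$ with its image $R_iB$ under the \emph{fixed} linear map $B$ given by the superdiagonal matrix $B_{p,p+1}=p$ (indeed $(R_iB)_j=(j-1)h_{\lambda_i-i+j-1}$, the required weighted-and-shifted row). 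The first summand, after discarding non-partition terms and applying induction, contributes $\sum_{\square}(j-i)\,s_{\lambda-\square}=\nabla(s_\lambda)$, since the content of the box removed from row $i$ is $j-i=\lambda_i-i$. For the correction I plan to use multiplicativity of the determinant: because row $i$ of $M(I+tB)$ equals $R_i+tR_iB$, multilinearity gives $\sum_i Q_i=\frac{d}{dt}\big|_{0}\det\!\big(M(I+tB)\big)=\operatorname{tr}(B)\,\det M$, and $B$ is strictly upper triangular, so $\operatorname{tr}(B)=0$ and $\sum_i Q_i=0$. Hence $\nabla(D_\lambda)=\nabla(s_\lambda)$. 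Combining the two computations, $\mydiff(D_\lambda-s_\lambda)=\nabla(D_\lambda-s_\lambda)=0$, so $D_\lambda-s_\lambda$ is constant by Lemma~\ref{lemma:key}; being homogeneous of degree $n\ge1$ it must vanish, which completes the induction (the base case $n=0$ being the empty determinant $1=s_\emptyset$).
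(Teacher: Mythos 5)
Your proof is correct and follows essentially the same route as the paper: induction on $|\lambda|$, applying the derivations $\mydiff$ and $\nabla$ row by row to the Jacobi--Trudi determinant, and concluding via the key lemma. The only cosmetic difference is in killing the $(j-1)h_{\lambda_i-i+j-1}$ correction term for $\nabla$: you package it as $\frac{d}{dt}\big|_{0}\det\bigl(M(I+tB)\bigr)=\operatorname{tr}(B)\det M=0$, while the paper regroups the same sum by columns and observes that column $j$ becomes proportional to column $j-1$ (or carries the factor $j-1=0$) --- these are the same computation.
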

\begin{proof}
We prove it by induction on  the size of a digram. The base $|\lambda|=0$ is obvious.

Denote by $\det_{\lambda}:=\det[h_{\lambda_i-i+j}]$ the right hand side of the formula. We have $$\mydiff(h_{\lambda_{i}-i+j})=h_{(\lambda_{i}-1)-i+j};$$
then after combining by rows we get 
$\mydiff(\det_{\lambda})=\sum_{\lambda'=\lambda-(i,\lambda_i)\in \Diag}  \det_{\lambda'}.$

For $\nabla$, we have $$\nabla(h_{\lambda_{i}-i+j})=(\lambda_i-i)h_{(\lambda_{i}-1)-i+j}+(j-1)h_{\lambda_{i}-i+(j-1)}.$$
We combine the left part by rows and 
get $\sum_{\lambda'=\lambda-(i,\lambda_i)\in \Diag}  (i-\lambda_i)\det_{\lambda'}.$
We combine the right part by columns and get $0$, because either two columns $j$ and $j-1$ become proportional, or a factor $j-1=0$.

By induction step the Jacobi-Trudi identity holds for smaller diagrams. Therefore, together with the key lemma, we have
$s_\lambda =\det_{\lambda}$.
\end{proof}

Similarly, we have the second Jacobi-Trudi identity. 
\begin{theorem}[The first Jacobi-Trudi formula]
For $\lambda=(\lambda_1,\ldots, \lambda_k)\in\Diag$, we have
	$$s_{\lambda}=\det\begin{bmatrix}
e_{\lambda'_1} & e_{\lambda'_1+1} &  e_{\lambda'_1+2} &  \dots &  e_{\lambda'_1+k'-1}\\
e_{\lambda'_{k'}-1} & e_{\lambda'_{k'}} &  e_{\lambda'_{k'}+1} &  \dots &  e_{\lambda'_1+k'-2}\\
    \vdots & \vdots & \vdots & \ddots & \vdots\\
e_{\lambda'_{k'}-{k'}+1} & e_{\lambda'_{k'}-{k'}+2} &  e_{\lambda'_{k'}-{k'}+3} &  \dots &  e_{\lambda'_{k'}}
\end{bmatrix},$$
where $\lambda'$ is the conjugate partition to $\lambda$ and $k'$ is the number of columns of $\lambda$.
\end{theorem}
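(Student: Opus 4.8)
The plan is to imitate almost verbatim the proof of the preceding (first) Jacobi--Trudi formula, exchanging the complete homogeneous functions $h_n$ for the elementary functions $e_n$ and letting the conjugate partition $\lambda'$ play the role previously played by $\lambda$. I would argue by induction on $|\lambda|$, the base $\lambda=\emptyset$ being the empty determinant $1=s_\emptyset$. Writing $\Det_\lambda:=\Det[e_{\lambda'_i-i+j}]$ for the right-hand side (so the $(i,j)$ entry is $e_{\lambda'_i-i+j}$), the goal is to check that $\mydiff$ and $\nabla$ act on $\Det_\lambda$ exactly as on $s_\lambda$. Then $\Det_\lambda-s_\lambda$ is annihilated by both operators, so by the key lemma (Lemma~\ref{lemma:key}) it is a constant; since $\Det_\lambda$ and $s_\lambda$ are both homogeneous of degree $|\lambda|>0$, that constant is $0$, giving $\Det_\lambda=s_\lambda$.

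The two single-variable rules I would establish first are $\mydiff(e_m)=e_{m-1}$ and $\nabla(e_m)=(1-m)e_{m-1}$. Both are immediate from $e_m=s_{(1^m)}$: the column $(1^m)$ has a unique removable box, at row $m$ and column $1$, so deleting it yields $(1^{m-1})$ and its content is $1-m$. Putting $m=\lambda'_i-i+j$ and splitting the content as $1-m=(i-\lambda'_i)-(j-1)$ produces the decomposition
$$\nabla\bigl(e_{\lambda'_i-i+j}\bigr)=(i-\lambda'_i)\,e_{(\lambda'_i-1)-i+j}-(j-1)\,e_{\lambda'_i-i+(j-1)},$$
which is the exact analogue of the formula used for $h$, but with $\lambda$ replaced by $\lambda'$ and both coefficients sign-flipped. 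Observe that decreasing $\lambda'_i$ by one removes the bottom box of the $i$-th column of $\lambda$, a box of content $i-\lambda'_i$.

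For $\mydiff$, differentiating $\Det_\lambda$ row by row gives $\sum_i \Det_{\lambda^{(i)}}$, where $\lambda^{(i)}$ shortens column $i$ by one box; exactly as before, a non-partition reduction (the case $\lambda'_i=\lambda'_{i+1}$) makes rows $i$ and $i{+}1$ of the determinant coincide and contributes $0$, so the sum collapses to $\sum_{\lambda''=\lambda-\text{box}\in\Diag}\Det_{\lambda''}$. For $\nabla$, I would push the derivation through the determinant via cofactor expansion, $\nabla\Det_\lambda=\sum_{i,j}C_{ij}\,\nabla e_{\lambda'_i-i+j}$, and insert the decomposition above. The ``row part'' $\sum_i(i-\lambda'_i)\Det_{\lambda^{(i)}}$ reproduces precisely the content-weighted box-removal sum defining $\nabla s_\lambda$. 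The ``column part'' $-\sum_{i,j}(j-1)C_{ij}\,e_{\lambda'_i-i+(j-1)}$ reorganizes, for each fixed $j$, into the cofactor expansion along column $j$ of a matrix whose $j$-th column has been replaced by its $(j{-}1)$-st column; such a matrix has two equal columns and vanishing determinant, and the case $j=1$ is killed by the factor $j-1$. Hence the column part is $0$, so $\nabla\Det_\lambda=\sum_{\lambda''}(\text{content})\,\Det_{\lambda''}$. Feeding in the induction hypothesis $\Det_{\lambda''}=s_{\lambda''}$ for all smaller diagrams matches both operators to $\mydiff s_\lambda$ and $\nabla s_\lambda$, and the key lemma finishes the argument.

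I expect the only delicate point to be the bookkeeping of signs and indices across the conjugation: one must confirm that ``decrease $\lambda'_i$'' really corresponds to ``remove a corner box of column $i$ with content $i-\lambda'_i$,'' and that the vanishing of the column part survives the sign change $(j-1)\mapsto-(j-1)$ (it does, since a zero determinant is insensitive to the scalar in front). Everything else is a literal transcription of the $h$-version, so I would keep the exposition parallel and merely flag these two checks.
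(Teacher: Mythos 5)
Your proof is correct and is precisely the adaptation the paper intends: the paper gives no separate argument for the $e$-version, saying only ``similarly,'' and your writeup carries out that adaptation of the $h$-proof (induction via $\mydiff$, $\nabla$ and the key lemma) with the right single-variable rules $\mydiff(e_m)=e_{m-1}$, $\nabla(e_m)=(1-m)e_{m-1}$ and the correct content bookkeeping $i-\lambda'_i$ for the bottom box of column $i$.
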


In a similar way one can prove the Giambelli identity; we leave it to the interested readers as an exercise.

\nextsubsection
\subsection{Dual Murnaghana-Nakayama rule and ``Schur'' operators} 

Note that $\mydiff$ and $\nabla$ almost commute. 
In particular, $\mydiff\cdot \nabla -\nabla\cdot \mydiff$ deletes dominos from diagrams (horizontal with coefficient $1$ and vertical with~$-1$). 
We will consider this further.
Define recursively the sequence of differential (bosonic) operators
\begin{itemize}
\item $\rho^{(1)}:=\mydiff$;	
\item $\rho^{(k+1)}:=\frac{[\rho^{(k)},\nabla]}{k}=\frac{\rho^{(k)}\cdot \nabla-\nabla\cdot \rho^{(k)}}{k}.$
\end{itemize}

\begin{proposition} The operators $\rho^{(k)},\ k\in\bN$ satisfy the Leibniz rule, i.e.,
$$\rho^{(k)}(fg)=(\rho^{(k)}f)g+f(\rho^{(k)}g).$$
\end{proposition}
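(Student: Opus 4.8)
The plan is to prove the Leibniz rule for all $\rho^{(k)}$ by induction on $k$. The base case $k=1$ is already established, since $\rho^{(1)}=\mydiff$ satisfies the Leibniz rule by construction. The entire induction step is therefore a purely formal algebraic statement: if a degree-decreasing operator $D$ is a derivation (satisfies the Leibniz rule), and $\nabla$ is also a derivation, then any commutator $[D,\nabla]$ is again a derivation. Since $\rho^{(k+1)}$ is defined as $\frac{1}{k}[\rho^{(k)},\nabla]$, and scalar multiples of derivations are derivations, this is exactly what the induction step requires.

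The key computation is the standard fact that the commutator of two derivations is a derivation. Concretely, suppose $D_1$ and $D_2$ both satisfy $D_i(fg)=(D_i f)g+f(D_i g)$. Then I would expand
$$
D_1 D_2(fg)=D_1\bigl((D_2 f)g+f(D_2 g)\bigr)
=(D_1 D_2 f)g+(D_2 f)(D_1 g)+(D_1 f)(D_2 g)+f(D_1 D_2 g),
$$
and symmetrically
$$
D_2 D_1(fg)=(D_2 D_1 f)g+(D_1 f)(D_2 g)+(D_2 f)(D_1 g)+f(D_2 D_1 g).
$$
Subtracting these, the two mixed cross-terms $(D_2 f)(D_1 g)$ and $(D_1 f)(D_2 g)$ cancel exactly, leaving
$$
[D_1,D_2](fg)=\bigl([D_1,D_2]f\bigr)g+f\bigl([D_1,D_2]g\bigr),
$$
which is the Leibniz rule for the commutator. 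Applying this with $D_1=\rho^{(k)}$ (a derivation by the inductive hypothesis) and $D_2=\nabla$ (a derivation by the earlier proposition) gives that $[\rho^{(k)},\nabla]$ is a derivation, and dividing by $k$ preserves this, completing the step.

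One small point I would record is that all the operators involved are well-defined linear maps on $\bY$ (equivalently on $\Lambda$) and decrease degree, so the compositions $\rho^{(k)}\nabla$ and $\nabla\rho^{(k)}$ make sense and the commutator lands in the correct space; this is immediate since each of $\mydiff$ and $\nabla$ has degree $-1$, so $\rho^{(k)}$ has degree $-k$ by an easy induction. I do not anticipate any genuine obstacle here: the result is a clean instance of the Lie-algebra fact that derivations of a commutative algebra form a Lie algebra under the commutator bracket. The only thing to be careful about is bookkeeping in the inductive statement—ensuring that the hypothesis being carried is precisely ``$\rho^{(k)}$ is a derivation,'' so that the commutator argument applies verbatim at each stage.
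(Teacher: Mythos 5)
Your proof is correct and follows exactly the paper's argument: induction on $k$ with base case $\rho^{(1)}=\mydiff$, and the induction step reduced to the standard fact that the commutator of two derivations is again a derivation (which the paper states without the explicit expansion you provide). No differences of substance.
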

\begin{proof}
We prove it by induction. The base case $k=1$ holds, because $\rho^{(1)}=\mydiff$.
It is easy to check that the commutator of any two operators satisfying the Leibniz rule also satisfies Leibniz rule. Hence, the induction step $k\to k+1$ is clear.
\end{proof}

\begin{theorem} The operators $\rho^{(k)},\ k\in\bN$  are given by
$$\rho^{(k)}s_{\lambda} = \sum_{\mu\subset \lambda, |\mu|=|\lambda|-k} (-1)^{ht(\lambda\setminus \mu) -1}s_{\mu},$$
where the sum ranges over those $\mu$ such that $\lambda\setminus \mu$ is a border strip with $k$
 boxes.
\end{theorem}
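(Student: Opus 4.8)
The plan is to prove the formula by induction on $k$, using the recursive definition $\rho^{(k+1)} = \frac{1}{k}[\rho^{(k)},\nabla]$ together with the explicit box-removal descriptions of $\mydiff$ and $\nabla$ from the introduction. The base case $k=1$ is immediate since $\rho^{(1)}=\mydiff$, and a border strip with one box is a single box with $\mathrm{ht}=1$, so the sign $(-1)^{\mathrm{ht}-1}=1$ matches $\mydiff(s_\lambda)=\sum_{\lambda'=\lambda-(i,j)\in\Diag} s_{\lambda'}$ exactly.

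For the inductive step, I would assume the formula for $\rho^{(k)}$ and compute $\rho^{(k+1)}s_\lambda = \frac{1}{k}\bigl(\rho^{(k)}\nabla - \nabla\rho^{(k)}\bigr)s_\lambda$ directly. Expanding $\nabla s_\lambda = \sum_{\nu=\lambda-(i,j)} (j-i)\, s_\nu$ and then applying $\rho^{(k)}$ via the inductive hypothesis (and vice versa for the other order), each resulting term is indexed by a pair: a single box removed with its content $j-i$, together with a border strip of size $k$ removed. The key observation is that the composite operation removes a total of $k+1$ boxes, and I would reorganize the double sum by grouping together all ways to reach a fixed subdiagram $\mu$ with $|\mu|=|\lambda|-(k+1)$ and $\lambda\setminus\mu$ a skew shape of size $k+1$. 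For a fixed such $\mu$, I must show that the total coefficient of $s_\mu$ equals $(-1)^{\mathrm{ht}(\lambda\setminus\mu)-1}$ when $\lambda\setminus\mu$ is a border strip, and $0$ otherwise.

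The heart of the argument is this coefficient computation, which I expect to be the main obstacle. When $\lambda\setminus\mu$ is \emph{not} a border strip, the contributions from the two orderings of the commutator must cancel; when it \emph{is} a border strip, I need the contents and the $1/k$ normalization to conspire so that the weighted count telescopes to the single sign $(-1)^{\mathrm{ht}-1}$. Concretely, a border strip of size $k+1$ can be peeled in several ways as ``border strip of size $k$'' plus ``one box'', and the signed content-weighted sum over the positions where the extra box sits, divided by $k$, must collapse to the correct sign. I anticipate that the content factors $(j-i)$ appearing in $\nabla$ will appear with a crucial difference structure in the commutator, so that a telescoping or sign-reversing involution argument on the endpoints of the border strip yields the result; the vanishing for non-border-strip shapes should follow from the same telescoping applied to disconnected or thick pieces, matching the domino-deletion heuristic ($\mydiff\nabla-\nabla\mydiff$ removes horizontal dominoes with $+1$ and vertical with $-1$) already noted in the text.

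An alternative I would keep in reserve is to bypass the explicit commutator bookkeeping by invoking the ring isomorphism $\bY\cong\Lambda$ from the preceding corollary: one can identify $\rho^{(k)}$ with the classical bosonic (power-sum adjoint) operator $p_k^\perp$ acting on symmetric functions, whose matrix coefficients in the Schur basis are exactly the Murnaghan--Nakayama signs. Establishing that the commutator recursion reproduces the standard Heisenberg relations among the $p_k^\perp$ would then let me cite the dual Murnaghan--Nakayama rule directly. However, since the stated goal of this section is an \emph{elementary} and self-contained development, I would prefer the direct inductive computation and only fall back on the $\Lambda$-identification to cross-check the final signs.
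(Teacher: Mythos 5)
Your plan is essentially the paper's proof: induction on $k$ through the commutator recursion with base case $\rho^{(1)}=\mydiff$, cancellation for disconnected shapes (the two operations commute) and for connected non-border-strips (the two splits of a shape containing a $2\times 2$ square carry equal coefficients and contract), and then the endpoint computation in which only the two ends of a border strip of size $k+1$ can play the role of the single box, their contents $b_1,b_2$ satisfy $b_1-b_2=k$, and $\frac{b_1(-1)^{ht-1}+b_2(-1)^{ht}}{k}=(-1)^{ht-1}$. The computations you defer — in particular the case analysis of whether the endpoint and its neighbor form a horizontal or vertical domino, which decides which order of the commutator contributes and how the height changes — are exactly the ones the paper carries out, and they go through as you anticipate.
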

\begin{proof}
It is clear that $$\rho^{(k)}s_{\lambda} = \sum_{\mu\subset \lambda, |\mu|=|\lambda|-k} a_{\lambda\setminus \mu}s_{\mu},$$
where the coefficients $a_{\lambda\setminus \mu}$ depend only on $\lambda\setminus \mu$. 

We will prove the original statement by induction; the base case $k=1$ is known. Assume that we know it for $k$.

At first, we prove that $a_{\lambda\setminus \mu}=0$ if $\lambda\setminus \mu$ is disconnected.  We have
 $\rho^{(k+1)}=\frac{\rho^{(k)}\cdot \nabla-\nabla\cdot \rho^{(k)}}{k};$ we can delete something connected by $\rho^{(k)}$ and something connected by $\nabla$. If these two shapes did not touch then we can commute the two operations, which gives us that the difference $\rho^{(k)}\cdot \nabla-\nabla\cdot \rho^{(k)}$ has only connected shapes.
 
The second step is to prove that $a_{\lambda\setminus \mu}=0$ if $\lambda\setminus \mu$ is not a border strip.
By induction step we know that $\rho^{(k+1)}$ can delete only connected shapes, which are almost border strips. More specifically, it is either a border strip or there is a pair with the same content (a row index minus a column index). Assume this content is $d$; then it is easy to see that our shape should contain a $2\times 2$ square . There are two possibilities to split a shape in a border strip and a square; the coefficients for these two splits are the same $(-1)^{ht-1}d$ (in the example, $\nabla$ ``deletes'' the red square). Hence, they contract.

\begin{figure}[htb!]
\ytableausetup{nosmalltableaux}
  \begin{ytableau}
   \none & \none & 4 & 5 \\
   \none &   *(red)2   & 3 \\
0 & 1 & 2\\
      \text{-}1\\
      \text{-}2
  \end{ytableau}
  \ \ \ \ \ \  \ \ \ \ \ \ \ \ 
 \begin{ytableau}
   \none & \none & 4 & 5 \\
   \none &   2   & 3 \\
0 & 1 & *(red)2\\
      \text{-}1\\
      \text{-}2
  \end{ytableau}
\end{figure}

It remains to count the coefficients of the border strips. The operator $\nabla$ should delete the end of a border strip. 
The border strip of length $k+1$ has two ends with the contents $b_1$ and $b_2$ such that $b_1-b_2=k$. 
Consider the case when $\nabla$ deletes the end $b_1$. If $(b_1-1,b_1)$ forms a horizontal domino, then $\nabla$ acts first and then $\rho^{(k)}$; furthermore, the height of the border strip after $\nabla$ remains the same, hence the coefficient is $b_1(-1)^{ht-1}$. If $(b_1-1,b_1)$ forms a vertical domino, then $\nabla$ acts after $\rho^{(k)}$; furthermore, the height will be changed. Hence, the coefficient is again $b_1(-1)^{ht-1}$. Similarly, we get the coefficient for another end to be equal to  $b_2(-1)^{ht}$. The equation $\frac{b_1(-1)^{ht-1}+b_2(-1)^{ht}}{k}=(-1)^{ht-1}$ completes the proof.
\end{proof}

The theorem above is very similar to Murnaghana-Nakayama rule.
\begin{theorem}[Murnaghana-Nakayama rule, cf.~\cite{Mur,Nak}]
 Multiplications by $p_{k},\ k\in\bN$  are given by
$$p_{k}s_{\lambda} = \sum_{\lambda\subset \mu, |\mu|=|\lambda|+k} (-1)^{ht(\mu\setminus \lambda) -1}s_{\mu},$$
where the sum ranges over those $\mu$ such that $\mu\setminus \lambda$ is a border strip with $k$
 boxes.
\end{theorem}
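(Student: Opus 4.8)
The plan is to obtain the Murnaghan--Nakayama rule as the Hall-adjoint (transpose) of the immediately preceding theorem, which computes $\rho^{(k)}$. Equip $\bY\cong\Lambda$ with the Hall inner product $\langle\cdot,\cdot\rangle$ for which the Schur functions $\{s_\lambda\}$ form an orthonormal basis; this is legitimate since $\bY$ comes with a distinguished Schur basis. For a symmetric function $g$, write $g^{\perp}$ for the adjoint of multiplication by $g$, so that $\langle g^{\perp}f,h\rangle=\langle f,gh\rangle$. Then the coefficient of $s_\mu$ in $p_k s_\lambda$ is $\langle p_k s_\lambda,s_\mu\rangle=\langle s_\lambda,p_k^{\perp}s_\mu\rangle$, i.e.\ the coefficient of $s_\lambda$ in $p_k^{\perp}s_\mu$. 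Thus the entire theorem reduces to computing $p_k^{\perp}$ in the Schur basis.

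The key step is the identification $p_k^{\perp}=\rho^{(k)}$. Both operators lower degree by $k$ and are derivations of $\Lambda$: the operator $\rho^{(k)}$ by the preceding proposition, and $p_k^{\perp}$ because, under the Hall form $\langle p_\lambda,p_\mu\rangle=z_\lambda\delta_{\lambda\mu}$ (with $z_\lambda=\prod_i i^{m_i(\lambda)}m_i(\lambda)!$), it equals $k\,\partial/\partial p_k$ on $\Lambda=\bQ[p_1,p_2,\ldots]$, which is manifestly a derivation. Since two derivations of a commutative algebra that agree on a set of algebra generators agree everywhere, it suffices to compare them on the generators $h_1,h_2,\ldots$ (equivalently the $s_{(n)}$, which the paper already uses as a multiplicative basis). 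By the preceding theorem, $\rho^{(k)}s_{(n)}=s_{(n-k)}$: the row shape $(n)$ has a unique removable border strip of $k$ boxes, namely its last $k$ boxes, of height $1$ and hence sign $(-1)^{0}=1$. On the other hand $p_k^{\perp}h_n=h_{n-k}$, the standard skewing identity. Hence $\rho^{(k)}$ and $p_k^{\perp}$ agree on all $h_n$, so $p_k^{\perp}=\rho^{(k)}$.

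It then remains only to assemble the pieces. By the preceding theorem, the coefficient of $s_\lambda$ in $\rho^{(k)}s_\mu=p_k^{\perp}s_\mu$ equals $(-1)^{ht(\mu\setminus\lambda)-1}$ when $\lambda\subset\mu$, $|\mu|-|\lambda|=k$, and $\mu\setminus\lambda$ is a border strip, and $0$ otherwise. By the transpose identity of the first paragraph this is exactly the coefficient of $s_\mu$ in $p_k s_\lambda$, which is precisely the asserted Murnaghan--Nakayama rule.

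I expect the only genuine obstacle to be the identification $p_k^{\perp}=\rho^{(k)}$, and inside it the \emph{non-circular} verification that $p_k^{\perp}h_n=h_{n-k}$: one must not invoke the Murnaghan--Nakayama rule itself (through $\langle h_n,p_k s_\mu\rangle$) to establish it. The direct computation from $h_n=\sum_{\lambda\vdash n}z_\lambda^{-1}p_\lambda$ together with $p_k^{\perp}=k\,\partial/\partial p_k$ sidesteps this cleanly; alternatively one may note that $p_k^{\perp}$ is a derivation because $p_k$ is a primitive element of the Hopf algebra $\Lambda$, and then pin down its value on $h_n$ by the same generating-function identity.
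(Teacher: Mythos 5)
Your proposal is correct, but it is worth noting that the paper does not actually prove this statement: it is quoted as a classical theorem (cf.\ Murnaghan, Nakayama), and the only gesture toward a derivation is the remark immediately after it, which says that complementing all diagrams in a large $a\times b$ rectangle and rotating by $\pi$ turns the operator $\rho^{(k)}$ into multiplication by $p_k$, ``therefore everything is dual.'' What you have done is make that duality precise in a different and cleaner way: instead of the combinatorial rectangle complement, you identify $\rho^{(k)}$ with the Hall adjoint $p_k^{\perp}$ of multiplication by $p_k$, and then transpose the paper's border-strip formula for $\rho^{(k)}$ (which is proved independently of Murnaghan--Nakayama, by induction on the commutator recursion, so there is no circularity). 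Your verification that the two derivations agree on the multiplicative generators $h_n=s_{(n)}$ is the right reduction, and your computation of $p_k^{\perp}h_n=h_{n-k}$ via $h_n=\sum_{\lambda\vdash n}z_\lambda^{-1}p_\lambda$ correctly avoids smuggling the rule back in. The trade-off is that you import the Hall inner product and the power-sum expansion of $h_n$, which sit outside the paper's self-contained operator calculus, whereas the rectangle-complement argument the paper alludes to would stay entirely combinatorial; on the other hand, your route actually delivers a complete proof, which the paper does not, and it fits the paper's program of deducing classical identities from the operators $\xi$ and $\nabla$.
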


Consider a big rectangle $a\times b$ such that all the discussed diagrams are inside it. If we consider a complement and rotate by $\pi$ (check), the operator $\rho^{(k)}$ acts as multiplication by $p_k$. Therefore, everything is dual and we have a lot of properties similar to $p_k,\ k\in \bN$. 

The operators $\rho^{(k)},\ k\in \bN$ together with $p_k,\ k\in \bN$ are called bosonic operators. They appear in the study of Boson-Fermion correspondence, see~\cite{Lam,Ong}.

\begin{corollary} The operators $\rho^{(k)},\ k\in\bN$ commute pairwise. 
\end{corollary}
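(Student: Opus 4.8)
The plan is to deduce pairwise commutativity from the complementation duality sketched just above the statement: inside a sufficiently large rectangle, $\rho^{(k)}$ is conjugate to multiplication by the power sum $p_k$, and multiplications by power sums commute automatically because $\Lambda$ is a commutative ring. So the whole argument reduces to making that duality into a precise conjugation identity and then invoking $p_jp_k=p_kp_j$.

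First I would fix $\lambda\in\Diag$ and indices $j,k\in\bN$, and choose a rectangle $R=a\times b$ whose number of rows exceeds the number of parts of $\lambda$ by at least $j+k$ and whose number of columns exceeds $\lambda_1$ by at least $j+k$. Every partition obtained from $\lambda$ by successively deleting border strips of total size $j+k$ is then contained in $R$, so both compositions $\rho^{(j)}\rho^{(k)}s_\lambda$ and $\rho^{(k)}\rho^{(j)}s_\lambda$ are computed entirely inside the span $V_R$ of $\{s_\mu:\mu\subseteq R\}$. Let $\phi\colon V_R\to V_R$ be the linear involution sending $s_\mu$ to $s_{\mu^c}$, where $\mu^c$ is the complement of $\mu$ in $R$ rotated by $\pi$; note $\phi^{-1}=\phi$. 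Comparing the border-strip formula for $\rho^{(k)}$ with the Murnaghan--Nakayama rule, and using that $\pi$-rotation preserves both the size and the height of a border strip, the coefficient of $s_\mu$ in $\rho^{(k)}s_\lambda$ equals the coefficient of $s_{\mu^c}$ in $p_k\,s_{\lambda^c}$, with the signs $(-1)^{ht-1}$ matching. This is exactly the identity $\phi\circ\rho^{(k)}=m_{p_k}\circ\phi$ on $V_R$, where $m_{p_k}$ denotes multiplication by $p_k$; equivalently $\rho^{(k)}=\phi^{-1}m_{p_k}\phi$. Because $R$ was chosen with margin at least $j+k$, this conjugation is valid for both operators simultaneously on every partition arising at either stage of the two compositions.

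Finally, conjugating term by term gives $\rho^{(j)}\rho^{(k)}=\phi^{-1}m_{p_j}m_{p_k}\phi$ and $\rho^{(k)}\rho^{(j)}=\phi^{-1}m_{p_k}m_{p_j}\phi$ on $V_R$, and since $\Lambda$ is commutative we have $m_{p_j}m_{p_k}=m_{p_jp_k}=m_{p_kp_j}=m_{p_k}m_{p_j}$. Evaluating at $s_\lambda$ yields $\rho^{(j)}\rho^{(k)}s_\lambda=\rho^{(k)}\rho^{(j)}s_\lambda$, and as $\lambda$ was arbitrary the operators commute pairwise. I expect the main obstacle to be the middle step rather than the algebra: turning the informal ``complement and rotate'' duality into the precise conjugation $\phi\circ\rho^{(k)}=m_{p_k}\circ\phi$, in particular verifying the sign bookkeeping $(-1)^{ht-1}$ under $\pi$-rotation and confirming that a single rectangle with margin $j+k$ forces the conjugation to hold uniformly across both stages of each composition. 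Once that is pinned down, commutativity of $\Lambda$ does the rest for free.
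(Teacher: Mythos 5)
Your route is the one the paper itself intends: the corollary is stated immediately after the remark that, upon complementing in a large rectangle and rotating by $\pi$, the operator $\rho^{(k)}$ becomes multiplication by $p_k$, so commutativity is inherited from the commutativity of $\Lambda$. Your elaboration is correct in substance, but the one step you yourself flagged does need a small repair: the identity $\phi\circ\rho^{(k)}=m_{p_k}\circ\phi$ cannot hold literally on $V_R$, because $m_{p_k}$ does not preserve $V_R$ --- the complement $\lambda^c$ touches the boundary of $R$ (its first part equals the full width $b$), so $p_k\,s_{\lambda^c}$ always contains terms $s_\nu$ with $\nu\not\subseteq R$, and enlarging the margin of $R$ does not help since it only makes $\lambda^c$ larger. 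The correct statement is $\phi\circ\rho^{(k)}=\pi_R\circ m_{p_k}\circ\phi$, where $\pi_R$ projects onto the span of $\{s_\nu:\nu\subseteq R\}$. This is harmless for your argument: since multiplication by a power sum only adds boxes, a partition not contained in $R$ can never return inside $R$, so $\pi_R\, m_{p_j}\,(1-\pi_R)=0$, hence $\pi_R\, m_{p_j}\,\pi_R\, m_{p_k}=\pi_R\, m_{p_j}\, m_{p_k}$, and the two compositions $\rho^{(j)}\rho^{(k)}$ and $\rho^{(k)}\rho^{(j)}$ both conjugate to $\pi_R\, m_{p_j}m_{p_k}\,\phi=\pi_R\, m_{p_k}m_{p_j}\,\phi$. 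With that insertion your proof is complete and matches the paper's (sketched) argument.
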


Let $\Schurp_{\lambda}(t_1,t_2,\ldots), \lambda\in \Diag$ be Schur functions written in another basis, namely in $\{p_1,p_2,\ldots\}$. Define the operators $\mydiff^{\lambda}:=\Schurp_{\lambda}(\rho^{(1)},\rho^{(2)},\ldots)$.
\begin{proposition}
The operators $\mydiff^{\nu},\ \nu\in\Diag$ act on Schur functions as follows:
 	$$\mydiff^{\nu}s_{\lambda} = \sum_{\mu} c_{\mu,\nu}^{\lambda}s_{\mu},$$
where the $c_{\nu,\mu}^{\lambda}$ are the Littlewood-Richardson coefficients. 

Furthermore,  their product is given by
$$\mydiff^{\nu}\mydiff^{\mu}=\sum_{\lambda}c_{\mu,\nu}^{\lambda}\mydiff^{\lambda}.$$
\end{proposition}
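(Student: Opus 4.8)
The plan is to prove both statements essentially simultaneously, using the fact that the operators $\rho^{(k)}$ are the ``dual'' (under complementation in a large rectangle) of multiplication by the power sums $p_k$, as observed in the text immediately above. The first statement says that $\mydiff^\nu$ acts on Schur functions by the transpose of the Littlewood--Richardson structure, and the second says that the $\mydiff^\nu$ multiply among themselves exactly as Schur functions do. First I would set up the duality precisely: fix an $a\times b$ rectangle containing all relevant diagrams, and let $c\colon \lambda\mapsto \lambda^c$ be the complementation-and-rotation map. The remark in the text that $\rho^{(k)}$ acts as multiplication by $p_k$ under $c$ extends, by the very definition $\mydiff^\lambda=\Schurp_\lambda(\rho^{(1)},\rho^{(2)},\ldots)$ and the fact that $s_\lambda$ is the same polynomial in the power sums on both sides, to the statement that $\mydiff^\nu$ is dual to multiplication by $s_\nu$.

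Granting this, the first identity is immediate: multiplication by $s_\nu$ satisfies $s_\nu s_\mu=\sum_\lambda c_{\nu,\mu}^\lambda s_\lambda$, so in the complement the coefficient of $s_\mu$ in $\mydiff^\nu s_\lambda$ is $c_{\nu,\mu}^\lambda$, which (using the symmetry $c_{\nu,\mu}^\lambda=c_{\mu,\nu}^\lambda$) gives exactly $\mydiff^\nu s_\lambda=\sum_\mu c_{\mu,\nu}^\lambda s_\mu$. The second identity then follows by composing operators: since $\mydiff^\nu$ and $\mydiff^\mu$ are both realized under $c$ as multiplication operators by $s_\nu$ and $s_\mu$, their composition is multiplication by the product $s_\nu s_\mu=\sum_\lambda c_{\nu,\mu}^\lambda s_\lambda$, and translating back through $c$ yields $\mydiff^\nu\mydiff^\mu=\sum_\lambda c_{\mu,\nu}^\lambda \mydiff^\lambda$. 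An alternative, entirely internal route avoids the rectangle altogether: one checks directly that $\mydiff^\nu$ is a differential operator (it is a polynomial in the $\rho^{(k)}$, each of which obeys the Leibniz rule, and the commuting of the $\rho^{(k)}$ makes the algebra they generate a polynomial ring), and then compares the two families of operators $\{\mydiff^\nu\}$ and $\{\text{adjoints of multiplication by }s_\nu\}$ via their action, invoking the key lemma to pin down the coefficients.

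The main obstacle I expect is making the complementation duality fully rigorous rather than heuristic. The issue is that $c$ depends on the ambient rectangle $a\times b$, so one must verify that $\rho^{(k)}$ and $\mydiff^\nu$ commute with stabilizing the rectangle (enlarging $a$ or $b$) and that the border-strip combinatorics — in particular the sign $(-1)^{ht-1}$ — transforms correctly under complement-and-rotate; a border strip of height $h$ in $\lambda$ becomes a border strip in the complement whose height is governed by its width, and one needs the Murnaghan--Nakayama signs to match on the nose. Once this dictionary is established, both displayed formulas are formal consequences. I would therefore spend most of the effort confirming that $\rho^{(k)}$ is genuinely the adjoint of $p_k\cdot$ with respect to the Hall inner product (equivalently, that the duality map is an algebra anti-isomorphism intertwining the two actions), since the Hall-inner-product formulation is cleaner than the rectangle picture and sidesteps the dependence on $a$ and $b$.
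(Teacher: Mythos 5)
Your proposal is correct, and it is worth noting that the paper itself states this proposition \emph{without proof}: the only justification offered is the preceding remark that, after complementing in a large rectangle, $\rho^{(k)}$ becomes multiplication by $p_k$, ``therefore everything is dual.'' Your plan fills exactly the gap the paper leaves open, and your instinct to replace the rectangle picture by the Hall inner product is the right one. The clean chain is: the theorem proved just before (border-strip deletion formula for $\rho^{(k)}$), combined with the Murnaghan--Nakayama rule and the orthonormality of the Schur basis, shows directly that $\langle \rho^{(k)} s_\lambda, s_\mu\rangle = \langle s_\lambda, p_k s_\mu\rangle$, i.e.\ $\rho^{(k)} = p_k^{\perp}$; since the $p_k^{\perp}$ commute and $X\mapsto X^{\perp}$ takes products of multiplication operators to products of adjoints (the order being irrelevant by commutativity), $\mydiff^{\nu}=\Schurp_{\nu}(p_1^{\perp},p_2^{\perp},\ldots)=\bigl(\Schurp_{\nu}(p_1,p_2,\ldots)\bigr)^{\perp}=s_{\nu}^{\perp}$, from which both displayed identities are immediate as you say. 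This route entirely sidesteps the issues you correctly flag with the rectangle (dependence on $a\times b$, stability under enlargement, matching of the $(-1)^{ht-1}$ signs under complement-and-rotate), so I would drop the rectangle argument rather than try to repair it; the adjoint computation is both shorter and fully rigorous, and it is the argument the paper implicitly relies on.
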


The operators $\mydiff^{\lambda}$ do not satisfy the Leibniz property in general, but we can write how they act on the product.
\begin{proposition}
\label{prop:prod}
 For $X,Y\in\Lambda$ and $\lambda\in\Diag,$ we have
	$$\mydiff^{\lambda}(XY)=\sum_{\mu,\nu}c_{\mu,\nu}^{\lambda}(\mydiff^{\mu}X)(\mydiff^{\nu}Y).$$
\end{proposition}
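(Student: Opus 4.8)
The plan is to recognize this identity as the statement that the assignment $F\mapsto \mydiff^F$ intertwines the standard coproduct on $\Lambda$ with the action on products, and to deduce it from two facts already available: the power sums are primitive, while the operators $\rho^{(k)}$ are commuting derivations. Concretely, over $\bQ$ the power sums $p_1,p_2,\ldots$ are free polynomial generators of $\Lambda$, and by the previous corollary the operators $\rho^{(1)},\rho^{(2)},\ldots$ commute pairwise; hence the assignment $p_k\mapsto \rho^{(k)}$ extends uniquely to an algebra homomorphism $\psi\colon \Lambda\to \mathrm{End}_{\bQ}(\Lambda)$ whose image is the commutative subalgebra generated by the $\rho^{(k)}$. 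By the definition $\mydiff^{\lambda}:=\Schurp_{\lambda}(\rho^{(1)},\rho^{(2)},\ldots)$ we get $\psi(s_\lambda)=\mydiff^{\lambda}$, and more generally $\psi(FG)=\psi(F)\psi(G)$, i.e. $\mydiff^{FG}=\mydiff^{F}\mydiff^{G}$; specializing $F=s_\mu,G=s_\nu$ recovers the product rule $\mydiff^{\mu}\mydiff^{\nu}=\sum_\lambda c_{\mu,\nu}^{\lambda}\mydiff^{\lambda}$ stated above, which is a reassuring consistency check.

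Next I would recall the coproduct $\Delta\colon \Lambda\to \Lambda\otimes\Lambda$, the unique algebra map determined by primitivity of the power sums, $\Delta(p_k)=p_k\otimes 1+1\otimes p_k$; classically this gives $\Delta(s_\lambda)=\sum_{\mu,\nu}c_{\mu,\nu}^{\lambda}\,s_\mu\otimes s_\nu$ (see Macdonald/Fulton), the Littlewood--Richardson coefficients being exactly the structure constants for Schur multiplication. I then prove, for every $F\in\Lambda$ and all $X,Y\in\Lambda$, the ``co-Leibniz'' identity
$$\mydiff^{F}(XY)=\sum_i (\mydiff^{F_i'}X)(\mydiff^{F_i''}Y),\qquad \text{where } \Delta(F)=\sum_i F_i'\otimes F_i''.$$
The argument is by induction on the number of power-sum factors, in the form: the identity holds for $F=1$ (trivially, since $\mydiff^{1}$ is the identity operator) and for $F=p_k$ (this is precisely the Leibniz rule $\rho^{(k)}(XY)=(\rho^{(k)}X)Y+X(\rho^{(k)}Y)$, matching $\Delta(p_k)=p_k\otimes 1+1\otimes p_k$), and it is stable under products.

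For the multiplicative step, assume the identity for $F$ and $G$. Using $\mydiff^{FG}=\mydiff^{F}\mydiff^{G}$ and the hypothesis for $G$,
$$\mydiff^{FG}(XY)=\mydiff^{F}\Big(\sum_j (\mydiff^{G_j'}X)(\mydiff^{G_j''}Y)\Big);$$
applying the hypothesis for $F$ to each product and then recombining the commuting operators via $\psi$ gives
$$\mydiff^{FG}(XY)=\sum_{i,j}(\mydiff^{F_i'}\mydiff^{G_j'}X)(\mydiff^{F_i''}\mydiff^{G_j''}Y)=\sum_{i,j}(\mydiff^{F_i'G_j'}X)(\mydiff^{F_i''G_j''}Y),$$
which is exactly the expansion coming from $\Delta(FG)=\Delta(F)\Delta(G)=\sum_{i,j}F_i'G_j'\otimes F_i''G_j''$. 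Specializing $F=s_\lambda$ and substituting $\Delta(s_\lambda)=\sum_{\mu,\nu}c_{\mu,\nu}^{\lambda}s_\mu\otimes s_\nu$ yields the proposition.

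The genuine content sits in the multiplicative step: it requires both that $\psi$ is a well-defined algebra homomorphism and that $\mydiff^{F_i'}\mydiff^{G_j'}=\mydiff^{F_i'G_j'}$, and both of these rest on the pairwise commutativity of the $\rho^{(k)}$ from the corollary; the Leibniz rule supplies only the base case (primitivity of the $p_k$). The single external ingredient is the classical coproduct formula for Schur functions, which I would simply cite. The remainder is bookkeeping with Sweedler-type sums, so I expect the main care to go into keeping the algebra-homomorphism and commutativity hypotheses explicit where they are used.
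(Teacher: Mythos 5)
Your proof is correct, but it identifies the coefficients by a genuinely different mechanism than the paper. The paper's proof has two steps: first it observes (tersely) that, because $\mydiff^{\lambda}$ is a polynomial in the commuting derivations $\rho^{(k)}$ and any such expression is a linear combination of the $\mydiff^{\mu}$, there exist \emph{some} coefficients $b_{\mu,\nu}^{\lambda}$ with $\mydiff^{\lambda}(XY)=\sum_{\mu,\nu}b_{\mu,\nu}^{\lambda}(\mydiff^{\mu}X)(\mydiff^{\nu}Y)$; it then pins these down by evaluating both sides at $X=s_{\alpha}$, $Y=s_{\beta}$ with $|\alpha|+|\beta|=|\lambda|$ and reading off the constant term via the preceding proposition $\mydiff^{\nu}s_{\gamma}=\delta_{\nu,\gamma}$ for $|\nu|=|\gamma|$, obtaining $b_{\alpha,\beta}^{\lambda}=c_{\alpha,\beta}^{\lambda}$ with no outside input. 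Your argument makes the first step fully explicit — the induction on power-sum monomials via $\mydiff^{FG}=\mydiff^{F}\mydiff^{G}$ and the Sweedler bookkeeping is exactly the expansion the paper asserts in one sentence, and it is a genuine improvement in transparency there — but then identifies the coefficients by citing the classical coproduct formula $\Delta(s_{\lambda})=\sum_{\mu,\nu}c_{\mu,\nu}^{\lambda}s_{\mu}\otimes s_{\nu}$ rather than by the paper's internal evaluation trick. What your route buys is a clean conceptual statement (the assignment $p_k\mapsto\rho^{(k)}$ intertwines the Hopf structure of $\Lambda$ with the action on products) and an explicit justification of the existence step; what it costs is one external classical ingredient that the paper's evaluation argument avoids, keeping everything inside the operator calculus already developed. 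Both arguments rest on the same two pillars — pairwise commutativity of the $\rho^{(k)}$ and their Leibniz rule — and both are complete; you might note that the paper's evaluation step could be grafted onto your setup to remove the citation entirely.
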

\begin{proof}
Fix $\lambda$. Since any expression in $\rho^{(k)},k\in\bN$ can be written as a linear combination of $\mydiff^{\mu},\mu\in\Diag$, we get that there is a set of coefficients $b_{\mu,\nu}^{\lambda}$ such that, for any $X,Y,$ we have the equality:
$$\mydiff^{\lambda}(XY)=\sum_{\mu,\nu}b_{\mu,\nu}^{\lambda}(\mydiff^{\mu}X)(\mydiff^{\nu}Y).$$
For $\alpha,\beta\in \Diag$ such that $|\alpha|+|\beta|=|\lambda|,$ we have
$$c_{\alpha,\beta}^{\lambda}=\mydiff^{\lambda}(\sum_{\gamma}c_{\alpha,\beta}^\gamma s_{\gamma})=\mydiff^{\lambda}(s_{\alpha}s_{\beta})=\sum_{\mu,\nu}b_{\mu,\nu}^{\lambda}(\mydiff^{\mu}s_{\alpha})(\mydiff^{\nu}s_{\beta})=b_{\alpha,\beta}^{\lambda},$$
which finishes our proof.
\end{proof}
One more important property of the introduced operators is given by the following proposition:
\begin{proposition} \label{prop:xinabla}We have
$$\mydiff p_1=1;$$ 
$$\mydiff p_k=0,\ k>1;$$
$$\nabla p_1=0;$$
$$\nabla p_k=kp_{k-1},\ k>1.$$
\end{proposition}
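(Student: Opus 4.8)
The plan is to reduce the statement to the action of $\mydiff$ and $\nabla$ on the complete homogeneous symmetric functions and then package the computation in a single generating-function identity. Since $h_m=s_{(m)}$ is the Schur function of one row, whose boxes have contents $0,1,\dots,m-1$ and whose unique removable box has content $m-1$, Theorem~\ref{operators:to:schur} (with $k=0$, the convention of this section) gives at once $\mydiff h_m=h_{m-1}$ and $\nabla h_m=(m-1)h_{m-1}$. First I would form the generating series $H(t)=\sum_{m\ge 0}h_m t^m\in\Lambda\otimes\bQ[[t]]$ and translate these into
$$\mydiff H(t)=t\,H(t),\qquad \nabla H(t)=\sum_{m\ge1}(m-1)h_{m-1}t^m=\sum_{n\ge0}n\,h_n t^{n+1}=t^2H'(t).$$

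Next I would invoke the standard identity $\log H(t)=\sum_{k\ge1}\tfrac{p_k}{k}t^k$ (Newton's identities in generating-function form). The key point is that $\mydiff$ and $\nabla$, being derivations of $\Lambda$ (they satisfy the Leibniz rule by the propositions of \S\ref{sec:diff}, and the multiplication on $\bY\cong\Lambda$ is commutative, so the reversed rule for $\nabla$ becomes the ordinary one), extend $t$-linearly and coefficientwise to derivations of $\Lambda\otimes\bQ[[t]]$. Hence they commute with the formal exponential: for a derivation $D$, applying $D$ to $H=\exp(\log H)$ yields $D\log H=(DH)/H$. Therefore
$$\mydiff\log H(t)=\frac{\mydiff H(t)}{H(t)}=t,\qquad \nabla\log H(t)=\frac{\nabla H(t)}{H(t)}=\frac{t^2H'(t)}{H(t)}=t\sum_{k\ge1}p_k t^{k},$$
the last equality because $tH'(t)/H(t)=t\,\tfrac{d}{dt}\log H(t)=\sum_{k\ge1}p_k t^{k}$.

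Finally I would compare coefficients of $t^k$, using that $D\log H(t)=\sum_{k\ge1}\tfrac{Dp_k}{k}t^k$ for $D\in\{\mydiff,\nabla\}$. For $\mydiff$ the right-hand side is $t$, so $\mydiff p_1=1$ and $\mydiff p_k=0$ for $k>1$. For $\nabla$ the right-hand side is $\sum_{k\ge1}p_k t^{k+1}$, which has no term in $t^1$, forcing $\nabla p_1=0$, while its coefficient of $t^j$ for $j\ge2$ is $p_{j-1}$, giving $\tfrac{\nabla p_j}{j}=p_{j-1}$, i.e.\ $\nabla p_j=j\,p_{j-1}$. This yields all four formulas.

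The only delicate point, and the step I expect to need the most care, is the justification that $\mydiff$ and $\nabla$ may legitimately be applied to the formal series $\log H(t)$ and commute with $\exp$ as in the chain rule; this is purely formal, since $\log H(t)$ has $t^k$-coefficient $p_k/k\in\Lambda\otimes\bQ$ and a derivation extended $t$-adically still obeys the Leibniz rule on each graded piece, so $D(\exp L)=(DL)\exp L$ holds coefficientwise. An equivalent elementary route that avoids this formal manipulation is to apply $\mydiff$ and $\nabla$ directly to Newton's recursion $m\,h_m=\sum_{i=1}^m p_i h_{m-i}$ and induct on $m$; I would keep this in reserve as a fallback, though the generating-function argument is shorter.
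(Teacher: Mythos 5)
Your proof is correct, but it takes a genuinely different route from the paper's. The paper expands the power sum in the hook basis, $p_k=\sum_{r=0}^{k-1}(-1)^r s_{(k-r,1^r)}$, applies Theorem~\ref{operators:to:schur} to each hook (a hook $(a,1^{k-a})$ has two removable corners, of contents $a-1$ and $-(k-a)$, which is exactly what the two displayed formulas in the paper's proof record), and lets the alternating sum telescope; no Leibniz rule is invoked beyond the definition of the operators on Schur functions. You instead use only the one-row case of Theorem~\ref{operators:to:schur} (namely $\mydiff h_m=h_{m-1}$ and $\nabla h_m=(m-1)h_{m-1}$), but must additionally rely on $\mydiff$ and $\nabla$ being derivations in order to push them through $\log H(t)$. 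Your handling of the one delicate point is sound: $\log H(t)$ has no constant term, so $D(\exp L)=(DL)\exp L$ holds coefficientwise for any derivation $D$ of $\Lambda$ extended $t$-linearly, and the commutativity of $\Lambda$ turns the reversed Leibniz rule for $\nabla$ into the ordinary one; your identification $\nabla H(t)=t^2H'(t)$ and the resulting $\nabla\log H(t)=\sum_{k\ge1}p_kt^{k+1}$ are both correct. The trade-off is that the paper's argument is elementary and local to single diagrams, while yours is shorter once the generating-function formalism is accepted and makes the structural reason for the answer more transparent ($\nabla$ acts on $H$ as $t^2\tfrac{d}{dt}$, hence on $\log H$ as well). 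Your Newton-recursion fallback is essentially an unrolled version of the same computation and would also work.
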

\begin{proof}
It is well known that
$$p_k=s_{(k)}-s_{(k-1,1)}+\ldots (-1)^k s_{(1,\ldots,1)}.$$	
If $k=1$ we get $p_{1}=s_{(1)}$; hence, by definition, we get $\mydiff p_1=1$ and $\nabla p_1=0$.

If $k>1$ the proposition immediately follows from
$$\mydiff s_{(a,1,\ldots,1,1)}=s_{(a-1,1,\ldots,1,1)}+s_{(a,1,\ldots,1)}$$
and 
$$\mydiff s_{(a,1,\ldots,1,1)}=(a-1)s_{(a-1,1,\ldots,1,1)}-(k-a)s_{(a,1,\ldots,1)}.$$
\end{proof}

It is possible to check, by direct computation, that:
\begin{proposition}
$$\rho^{(k)}p_{k}=k$$
and
$$\rho^{(k)}p_{k'}=0, \ k'\neq k$$
\end{proposition}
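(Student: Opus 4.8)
The plan is to prove the two identities $\rho^{(k)}p_k = k$ and $\rho^{(k)}p_{k'}=0$ for $k'\neq k$ by reducing everything to the already-established action of $\rho^{(k)}$ on Schur functions, namely the dual Murnaghan-Nakayama formula
\[
\rho^{(k)}s_{\lambda} = \sum_{\substack{\mu\subset \lambda,\ |\mu|=|\lambda|-k\\ \lambda\setminus\mu\ \text{border strip}}} (-1)^{ht(\lambda\setminus \mu)-1}s_{\mu}.
\]
First I would handle the degree bookkeeping: since $\rho^{(k)}$ has degree $-k$ and $p_{k'}$ is homogeneous of degree $k'$, the result $\rho^{(k)}p_{k'}$ is homogeneous of degree $k'-k$. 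For $k'<k$ this is already zero; for $k'=k$ it is a degree-$0$ element, i.e.\ a rational constant (an element of $\VDiag_0=\bQ$); and for $k'>k$ it lives in degree $k'-k>0$. This immediately reduces the case $k'<k$ to triviality and tells us what \emph{kind} of object to expect in the remaining cases.

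The main computation is to plug in the expansion $p_{k'} = \sum_{a=1}^{k'} (-1)^{a-1}s_{(k'-a+1,\,1^{a-1})}$ (the hook expansion used in Proposition~\ref{prop:xinabla}) and apply the border-strip formula termwise. The key observation I would exploit is that the only $\mu$ contributing to $\rho^{(k)}s_{(k'-a+1,1^{a-1})}$ are those for which the hook minus $\mu$ is a border strip of size $k$; since a hook shape is itself essentially a single border strip, removing a size-$k$ border strip from its outer end yields again a (possibly empty) hook, and the height statistic $ht$ is completely controlled by how many of the $k$ removed boxes lie in the vertical arm versus the horizontal arm. Working through which sub-hooks $\mu$ appear, I expect massive cancellation across the alternating sum in $a$: for $k'=k$ every term's border strip is the \emph{entire} hook, leaving $\mu=\varnothing$ (the empty diagram, i.e.\ the constant $1$), and the signs combine to give exactly $k$; for $k'>k$ the surviving hooks $\mu$ of size $k'-k$ cancel in pairs, giving $0$.

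For the case $k'=k$ more explicitly: removing the whole hook $(k-a+1,1^{a-1})$ (a border strip of size $k$ of height $a$) contributes $(-1)^{ht-1}=(-1)^{a-1}$ to the coefficient of $s_\varnothing=1$, and the hook expansion already carries a sign $(-1)^{a-1}$, so all $k$ terms $a=1,\dots,k$ add with the \emph{same} sign, yielding $\sum_{a=1}^{k}(-1)^{a-1}(-1)^{a-1}=\sum_{a=1}^k 1 = k$, exactly as claimed. I would note that an even cleaner route is available: by Proposition~\ref{prop:xinabla} we have the commutation relations $\mydiff p_k=\delta_{k,1}$ and $\nabla p_k = k\,p_{k-1}$ for $k>1$ (with $\nabla p_1=0$), so one can prove $\rho^{(k)}p_{k'}=\delta_{k,k'}\,k$ purely algebraically by induction on $k$, using the recursion $\rho^{(k+1)}=\tfrac1k[\rho^{(k)},\nabla]$ and computing $[\rho^{(k)},\nabla]p_{k'} = \rho^{(k)}(\nabla p_{k'}) - \nabla(\rho^{(k)}p_{k'})$ directly from the inductive hypothesis.

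The hard part will be the sign and cancellation bookkeeping in the border-strip approach, in particular verifying that when $k'>k$ the surviving contributions genuinely cancel in pairs rather than merely summing to something that happens to vanish numerically only in small cases. For this reason I would favor the second, inductive route via the $p_k$-relations of Proposition~\ref{prop:xinabla}: there the only real step is checking that $\nabla$ maps the relevant power sums correctly and that the commutator recursion propagates the Kronecker-delta answer, which avoids the combinatorics of heights of border strips entirely. Concretely, the induction hypothesis $\rho^{(k)}p_{k'}=\delta_{k,k'}\,k$ gives $\nabla(\rho^{(k)}p_{k'})=\delta_{k,k'}\,k\,\nabla(k)=0$ (a constant), while $\rho^{(k)}(\nabla p_{k'})=\rho^{(k)}(k' p_{k'-1})=k'\delta_{k,k'-1}(k)$, so $\rho^{(k+1)}p_{k'}=\tfrac1k\,k'\delta_{k,k'-1}\,k = \delta_{k+1,k'}\,(k+1)$ after substituting $k'=k+1$, completing the induction and establishing both identities simultaneously.
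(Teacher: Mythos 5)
Your proposal is correct, and the inductive route you ultimately favor — induction on $k$ via the recursion $\rho^{(k+1)}=\tfrac1k[\rho^{(k)},\nabla]$ together with the actions $\mydiff p_{k}=\delta_{k,1}$ and $\nabla p_k=kp_{k-1}$ from Proposition~\ref{prop:xinabla}, with the case $k'<k$ handled by degree — is exactly the paper's proof (the paper defers to the identical argument for the more general Lemma~\ref{lem:rho-p}). The border-strip sketch you also offer is left incomplete for $k'>k$, but since your inductive argument is complete and correct, nothing is missing.
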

We will not write the proof here, see Lemma~\ref{lem:rho-p} for a more general statement with the same proof.

\begin{remark}This proposition gives us one more connection with characters of the symmetric group. Namely, we already know that $\mydiff^{\lambda}\Schur_{\mu}=\delta_{\lambda,\mu}$ if $|\lambda|=|\mu|$. Furthermore, by the previous proposition, any monomial $\rho^{(i_1)}\ldots \rho^{(i_k)}$ of $\mydiff_{\lambda}$  acts only on the same monomial $p_{i_1}\ldots p_{i_k}$ of $\Schur_{\mu}$ with a certain coefficient.
\end{remark}

\nextsection
\section{Decreasing operators for back stable polynomials}
\label{sec:times}

Similarly to the  previous section, we consider the sequence of differential (bosonic) operators:
\begin{itemize}
\item $\rho^{(1)}:=\mydiff$;	
\item $\rho^{(k+1)}:=\frac{[\rho^{(k)},\nabla]}{k}=\frac{\rho^{(k)}\cdot \nabla-\nabla\cdot \rho^{(k)}}{k}.$
\end{itemize}

Here we have a bigger set of polynomial functions, namely $p_{k,a}=\sum_{i\in (-\infty,a]}x_i^k$. 
\begin{lemma}
\label{lem:rho-p}
$$\rho^{(k)}p_{k,a}=k$$
and
$$\rho^{(k)}p_{k',a}=0, \ k'\neq k$$
\end{lemma}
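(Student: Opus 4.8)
The plan is to first compute the action of $\mydiff$ and $\nabla$ on the localized power sums $p_{k,a}$, reducing to the Schur-setting formulas of Proposition~\ref{prop:xinabla}, and then to run an induction on $k$ driven by the commutator recursion $\rho^{(k+1)}=\frac1k[\rho^{(k)},\nabla]$.

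For the first step I would observe that $p_{k,a}=\sum_{i\le a}x_i^k$ is a symmetric function in $\{x_i:i\le a\}$ whose expansion in the Schur basis $s_\lambda(x_i,i\le a)$ uses the universal (that is, $a$-independent) hook coefficients $p_k=\sum_{j=0}^{k-1}(-1)^j s_{(k-j,1^j)}$. By Theorem~\ref{operators:to:schur} the operator $\mydiff$ removes a box with coefficient $1$ regardless of $a$, while $\nabla$ removes a box with coefficient $j-i+a$; hence on $\{x_i:i\le a\}$ we have the clean decomposition $\nabla=\nabla_{(0)}+a\,\mydiff$, where $\nabla_{(0)}$ is the content-$(j-i)$ operator appearing in \S4. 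Since the Schur expansion of $p_{k,a}$ matches that of $p_k$, Proposition~\ref{prop:xinabla} transfers verbatim and gives
$$\mydiff p_{1,a}=1,\qquad \mydiff p_{k,a}=0\ (k>1),\qquad \nabla p_{1,a}=a,\qquad \nabla p_{k,a}=k\,p_{k-1,a}\ (k>1).$$

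For the second step I would induct on $k$, the base case $k=1$ being $\rho^{(1)}=\mydiff$ together with the formulas above. Assuming the claim for $\rho^{(k)}$, I compute $\rho^{(k+1)}p_{k',a}=\frac1k\bigl(\rho^{(k)}\nabla p_{k',a}-\nabla\rho^{(k)}p_{k',a}\bigr)$. First, $\rho^{(k)}$ has degree $-k$ and so annihilates every constant; by the inductive hypothesis $\rho^{(k)}p_{k',a}\in\bQ$ for every $k'$, whence $\nabla\rho^{(k)}p_{k',a}=0$, and likewise $\rho^{(k)}\nabla p_{1,a}=\rho^{(k)}(a)=0$. For $k'\ge 2$ one is therefore left with $\rho^{(k+1)}p_{k',a}=\tfrac{k'}{k}\,\rho^{(k)}p_{k'-1,a}$, which by induction equals $\tfrac{k+1}{k}\cdot k=k+1$ when $k'=k+1$ and vanishes otherwise; the case $k'=1$ gives $0$ as well. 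This closes the induction and yields $\rho^{(k)}p_{k,a}=k$ and $\rho^{(k)}p_{k',a}=0$ for $k'\neq k$.

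The routine part is the induction, whose case bookkeeping is straightforward once one notices that every $\rho^{(k)}p_{k',a}$ is a constant and hence is killed by a further $\nabla$. The main obstacle, and the place requiring care, is the first step: one must justify that the localized power sums inherit the Schur-setting formulas under the content shift $\nabla=\nabla_{(0)}+a\,\mydiff$, i.e. that passing from $\{x_i:i\le 0\}$ to $\{x_i:i\le a\}$ alters only the constant $\nabla p_{1,a}$ (which is harmless, being annihilated by every $\rho^{(k)}$) while leaving the higher formulas intact. This is exactly the ``same proof'' alluded to for the Schur case, specialized to $a=0$.
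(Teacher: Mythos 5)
Your proof is correct and follows essentially the same route as the paper: first transferring Proposition~\ref{prop:xinabla} to $p_{k,a}$ via the observation that $\nabla$ acts on functions symmetric in $\{x_i:i\le a\}$ as the content operator plus $a\,\mydiff$, and then inducting on $k$ through the commutator recursion, with the $\nabla\rho^{(k)}p_{k',a}$ term dying because the inductive hypothesis makes $\rho^{(k)}p_{k',a}$ a constant. Your uniform treatment of all $k'$ via the induction hypothesis is a slightly tidier bookkeeping than the paper's separate degree argument for $k'<k$, but the substance is identical.
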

\begin{proof}
	We will prove it by induction on $k$. Since $\nabla$ and $\mydiff$ act on $p_{k,a}$ similarly to how $\nabla+a\mydiff$ and $\mydiff$ act on $p_{k,0}$, by Proposition~\ref{prop:xinabla} we have: 
\begin{itemize}
\item $\mydiff(p_{1,a})=1$ and $\mydiff(p_{k,a})=0, k>1$; 
\item $\nabla(p_{1,a})=a$ and $\nabla(p_{k,a})=kp_{k-1,a}, k>1$,
\end{itemize}
This gives the base case for $k=1$. Let us check the induction step $k\to k+1$:
$$\rho^{(k+1)}p_{k+1,a}=\frac{\rho^{(k)}\cdot \nabla-\nabla\cdot \rho^{(k)}}{k}p_{k+1,a}=\frac{(k+1)\rho^{(k)}p_{k,a}}{k}=(k+1),$$
and
$$\rho^{(k+1)}p_{k'+1,a}=\frac{\rho^{(k)}\cdot \nabla-\nabla\cdot \rho^{(k)}}{k}p_{k'+1,a}=\frac{(k'+1)\rho^{(k)}p_{k',a}}{k}=0,\ k'> k.$$
If $k'<k$ we immediately get $0$ because the degree after the operator should be $k'-k$.
\end{proof}

\begin{theorem} The operators $\rho^{(k)},\ k\in\bN$ satisfy the Leibniz rule and commute pairwise (for Schubert polynomials). Furthermore, for any $a_1,\ldots,a_k\in \bN$ and $w\in \PermutZ$ such that $\sum a_i=\ell(w),$ we have:
$$\rho^{(a_1)}\rho^{(a_2)}\ldots \rho^{(a_k)}\BSch_w(x_i, i\in \bZ)=\rho^{(a_1)}\rho^{(a_2)}\ldots \rho^{(a_k)}\F_w(x_i, i\in (-\infty,0]).$$
\end{theorem}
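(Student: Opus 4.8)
The plan is to prove the three assertions in order, since the final (and most interesting) identity depends on the first two. The fact that each $\rho^{(k)}$ satisfies the Leibniz rule has already been established in the earlier proposition (it is the commutator of Leibniz operators, proved by induction), so I would only need to cite it. For pairwise commutativity, I would mimic the Schur case: in \S4 the operators $\rho^{(k)}$ were shown to commute by identifying them, under complementation in a large rectangle, with multiplication by the power sums $p_k$, which commute. The obstacle is that back stable Schubert polynomials do not live inside such a rectangle, so I cannot directly recycle that duality. Instead I would argue that $\rho^{(a)}\rho^{(b)}-\rho^{(b)}\rho^{(a)}$ is a decreasing operator satisfying the Leibniz rule (commutators of Leibniz operators are Leibniz), hence by Theorem~\ref{schur:lin} it is determined by its values on $s_{\tiny{\yng(1)}}$ and $s_{\tiny{\yng(2)}}$ when restricted to $\Lambda$; but here I want the statement for \emph{Schubert} polynomials, so I would instead reduce commutativity to the Schur case using shifts, exactly as in the proof that $\nabla$ satisfies the Leibniz rule.

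The heart of the matter is the final displayed identity, relating the action of a length-saturating monomial $\rho^{(a_1)}\cdots\rho^{(a_k)}$ (with $\sum a_i=\ell(w)$) on $\BSch_w$ to its action on the Stanley symmetric function $\F_w$. First I would observe that both sides are constants (degree-zero elements of $\bQ$): applying a total degree $-\ell(w)$ operator to a degree $\ell(w)$ object lands in $\VDiag_0=\bQ$. So the claim is an equality of two rational numbers. My strategy is to use the shift $\shift$: by definition $\F_w=\lim_{k\to+\infty}\Sch_{\shift^k w}$, and the operator $\rho^{(a)}$ is built from $\mydiff$ and $\nabla$, which interact with $\shift$ in the controlled way recorded in the proofs above ($\nabla$ shifts to $\nabla+\mydiff$, etc.). Since each $\rho^{(a)}$ is a fixed polynomial in the commutators of $\mydiff$ and $\nabla$, I expect that the constant produced by a degree-saturating monomial is \emph{shift-invariant}: the extra $\mydiff$-contributions that appear under $\shift$ all raise intermediate degrees and therefore cannot survive all the way down to a constant. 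This is the key reduction: the saturating constant does not see the difference between $\BSch_w$ and its large-shift limit.

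Concretely, I would prove shift-invariance of the left-hand-side constant, i.e.\ that $\rho^{(a_1)}\cdots\rho^{(a_k)}\BSch_w = \rho^{(a_1)}\cdots\rho^{(a_k)}\BSch_{\shift w}$ whenever $\sum a_i=\ell(w)$. Granting this, the left side equals $\rho^{(a_1)}\cdots\rho^{(a_k)}\BSch_{\shift^m w}$ for all $m$, and as $m\to\infty$ the back stable polynomial $\BSch_{\shift^m w}$ agrees with $\F_w$ on any fixed finite set of variables $x_i$, $i\in(-\infty,0]$. Because the saturating monomial only ever extracts the coefficient of a single degree-$\ell(w)$ monomial (tracked through the $\mydiff,\nabla$ definitions, as in Proposition~\ref{pr:equality}), and that coefficient stabilizes, the two constants coincide. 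To establish shift-invariance I would commute a single $\shift$ past the string of $\rho^{(a_i)}$'s: each $\rho^{(a)}$ changes by terms involving one fewer box deleted, but those correction terms multiply against the remaining operators to give something of strictly positive residual degree, hence zero once the total degree is saturated. The main obstacle is exactly this bookkeeping: controlling precisely which correction terms arise when $\shift$ is commuted through the iterated commutators defining $\rho^{(a)}$, and verifying they all die for degree reasons. I would organize this by writing $\rho^{(a)}\circ\shift = \shift\circ(\rho^{(a)}+\text{lower})$ and inducting on the number of operators, tracking degrees carefully to discard every correction term against the saturation constraint $\sum a_i=\ell(w)$.
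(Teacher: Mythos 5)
Your plan diverges substantially from the paper's proof, and both of its key steps have genuine gaps. First, the mechanism you propose for shift-invariance is wrong. Conjugating by $\shift$ sends $\mydiff\mapsto\mydiff$ and $\nabla\mapsto\nabla+\mydiff$, so the correction picked up by $\rho^{(a)}$ is (up to scalars) $[\rho^{(a-1)},\mydiff]$, which has the \emph{same} degree $-a$ as $\rho^{(a)}$ itself; there is no ``strictly positive residual degree,'' and nothing dies for degree reasons at saturation. These corrections vanish only because $[\rho^{(a-1)},\rho^{(1)}]=0$, i.e.\ you need pairwise commutativity first; commutativity itself is most easily obtained from Lemma~\ref{lem:rho-p}, since the commutator is a derivation killing every $p_{k,a}$ and hence the algebra they generate, which contains all back stable Schubert polynomials (note $x_a=p_{1,a}-p_{1,a-1}$). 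So shift-invariance is true, but not for the reason you give.

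Second, and more seriously, the limit interchange is unjustified. Even granting that $\rho^{(a_1)}\cdots\rho^{(a_k)}\BSch_{\shift^m w}$ is the same constant for all $m$, you need this constant to equal $\rho^{(a_1)}\cdots\rho^{(a_k)}\F_w$, which requires the operators to be continuous with respect to coefficient-wise convergence of polynomials. That is clear for $\mydiff$, which genuinely extracts a limiting coefficient, but $\nabla$ (and hence every $\rho^{(a)}$ with $a\ge 2$) is defined in this paper only by its action on the basis $\{\BSch_v\}$, weighted by the index of the transposition; it is not a coefficient extraction, so your appeal to ``the coefficient stabilizes'' does not apply. The paper's proof avoids both issues at once: it writes $\BSch_w$ as a polynomial in the $p_{k,a}$, uses Lemma~\ref{lem:rho-p} to see that $\rho^{(k)}p_{k',a}=k\,\delta_{k,k'}$ independently of $a$, and notes that under the saturation $\sum a_i=\ell(w)$ the resulting constant is computed by the Leibniz rule purely from these values, so every $p_{k,a}$ may be replaced by $p_{k,0}$, turning $\BSch_w$ into $\F_w$. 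To close your limit step you would still need an argument of this kind, or else an analytic formula for $\nabla$ in the spirit of \cite{HPSW}, neither of which appears in your outline.
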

\begin{proof}
The first part is trivial. For the second part,  we can express $\BSch_w$ in terms of $p_{k,a}$. Since $\sum a_i=\ell(w)$ and our operators act on $p_{k,a}, a\in \bZ$ identically for all $a$, we can change all $a_i'th$ to $0$. Therefore, we will get $\F_w$ instead of~$\BSch_w$.
\end{proof}
 Since the operators $\rho_k,k\in \bN$ commute, we can again consider the operators $\mydiff^{\lambda},\lambda\in\Diag$. We already know how these operators act on Schur polynomials.
\begin{corollary}\label{cor:operschur}
For a permutation $w$ and a diagram $\lambda$ s.t. $|\lambda|=\ell(w),$ we have
$$\mydiff^{\lambda}\BSch_w=a_{\lambda,w},$$
where the $a_{\lambda,w}$ are the coefficients of the expressions of Stanleys symmetric functions in terms of Schur functions.	
\end{corollary}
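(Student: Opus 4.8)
The plan is to reduce the computation of $\mydiff^{\lambda}\BSch_w$ to the corresponding computation on the Stanley symmetric function $\F_w$, where the action of the operators $\mydiff^{\lambda}$ is already controlled through their action on Schur functions. The first observation I would make is a homogeneity one: the symmetric function $\Schurp_{\lambda}$ is homogeneous of degree $|\lambda|$ when written in the power sums, with $\deg p_k=k$. Hence, after substituting $\rho^{(k)}$ for $p_k$, the operator $\mydiff^{\lambda}=\Schurp_{\lambda}(\rho^{(1)},\rho^{(2)},\ldots)$ is a linear combination of monomials $\rho^{(a_1)}\rho^{(a_2)}\cdots\rho^{(a_m)}$ in which every index tuple satisfies $\sum_i a_i=|\lambda|=\ell(w)$.

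The second step is the key reduction. Because each monomial appearing in $\mydiff^{\lambda}$ has total index equal to $\ell(w)$, the hypothesis of the preceding Theorem is met by every such monomial, so that $\rho^{(a_1)}\cdots\rho^{(a_m)}\BSch_w=\rho^{(a_1)}\cdots\rho^{(a_m)}\F_w(x_i,\ i\in(-\infty,0])$. Taking the linear combination dictated by the coefficients of $\Schurp_{\lambda}$ then gives the identity $\mydiff^{\lambda}\BSch_w=\mydiff^{\lambda}\F_w$. At this point the problem has been transported entirely into the ring of symmetric functions.

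The third and final step is to expand $\F_w=\sum_{\nu}a_{\nu,w}s_{\nu}$ via the Edelman--Greene theorem. Since $\F_w$ is homogeneous of degree $\ell(w)$, only partitions $\nu$ with $|\nu|=\ell(w)=|\lambda|$ contribute. For such $\nu$ the known action $\mydiff^{\lambda}s_{\nu}=\sum_{\mu}c_{\mu,\lambda}^{\nu}s_{\mu}$ forces $|\mu|=0$, i.e.\ $\mu$ is empty and $c_{\emptyset,\lambda}^{\nu}=\delta_{\lambda,\nu}$, so $\mydiff^{\lambda}s_{\nu}=\delta_{\lambda,\nu}$. Therefore $\mydiff^{\lambda}\F_w=\sum_{\nu}a_{\nu,w}\,\delta_{\lambda,\nu}=a_{\lambda,w}$, which is exactly the claim. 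The only point requiring genuine care — and thus the step I would treat as the main obstacle — is the passage $\mydiff^{\lambda}\BSch_w=\mydiff^{\lambda}\F_w$, since the transport identity of the preceding Theorem is valid only for monomials whose total index is precisely $\ell(w)$; verifying that every monomial of $\Schurp_{\lambda}$ has this exact total degree is where the hypothesis $|\lambda|=\ell(w)$ and the homogeneity of $\Schurp_{\lambda}$ are both essential.
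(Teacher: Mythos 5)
Your proof is correct and follows exactly the route the paper intends: the corollary is deduced from the preceding theorem by noting that every monomial of $\Schurp_{\lambda}(\rho^{(1)},\rho^{(2)},\ldots)$ has total index $|\lambda|=\ell(w)$, so $\mydiff^{\lambda}\BSch_w=\mydiff^{\lambda}\F_w$, and then using $\mydiff^{\lambda}s_{\nu}=\delta_{\lambda,\nu}$ for $|\nu|=|\lambda|$ together with the Edelman--Greene expansion. Your explicit attention to the homogeneity of $\Schurp_{\lambda}$ in the power sums is a welcome clarification of a step the paper leaves implicit.
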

\begin{theorem}
For a permutation $w$ and a diagramm $\lambda$, we have
$$\mydiff^{\lambda}\BSch_w=\sum_{\substack{ \ell(u)=|\lambda|\\ \ell(u^{-1}w)=\ell(w)-|\lambda|}}a_{\lambda,u} \BSch_{u^{-1}w},$$
where the $a_{\lambda,u}$ are the coefficients of the expressions of Stanley symmetric functions in terms of Schur functions.
\end{theorem}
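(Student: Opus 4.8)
The plan is to realize $\mydiff^{\lambda}$ as a one-sided coderivation for the natural coproduct on back stable Schubert polynomials, after which the asserted formula becomes a direct unwinding of that coproduct.

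First I would rewrite the target. Setting $v=u^{-1}w$, the two constraints $\ell(u)=|\lambda|$ and $\ell(u^{-1}w)=\ell(w)-|\lambda|$ say exactly that $w=uv$ is a length-additive factorization with $\ell(u)=|\lambda|$, so the right-hand side is $\sum_{w=uv,\ \ell(u)+\ell(v)=\ell(w),\ \ell(u)=|\lambda|} a_{\lambda,u}\,\BSch_{v}$. This is precisely the output shape of the Hopf structure of Lam--Lee--Shimozono, whose coproduct is $\Delta\BSch_w=\sum_{w=uv,\ \ell(u)+\ell(v)=\ell(w)}\BSch_u\otimes\BSch_v$ (\cite{LLS}). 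The key observation is that $\mydiff$ and $\nabla$ are each obtained from $\Delta$ by evaluating a functional on the \emph{left} tensor factor: comparing with $\mydiff\BSch_u=\sum_{k:\ \ell(s_ku)=\ell(u)-1}\BSch_{s_ku}$ and the analogous formula for $\nabla$, and using that a length-one left factor is a simple reflection $s_k$ (so $v=s_kw$ with $\ell(s_kw)=\ell(w)-1$), one gets $\mydiff=(\epsilon^{\mydiff}\otimes\mathrm{id})\circ\Delta$ and $\nabla=(\epsilon^{\nabla}\otimes\mathrm{id})\circ\Delta$, where $\epsilon^{\mydiff}(\BSch_{s_k})=1$, $\epsilon^{\nabla}(\BSch_{s_k})=k$, and both functionals vanish on $\BSch_u$ with $\ell(u)\neq1$.

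Next I would propagate this through the bosonic construction. The operators of the form $(\epsilon\otimes\mathrm{id})\circ\Delta$ form a subalgebra of the algebra of linear operators: they are closed under linear combinations, and by coassociativity $(\phi\otimes\mathrm{id})\Delta\circ(\psi\otimes\mathrm{id})\Delta=((\psi*\phi)\otimes\mathrm{id})\Delta$ with $\psi*\phi=(\psi\otimes\phi)\Delta$ the convolution. Since every $\rho^{(k)}$, and hence every $\mydiff^{\lambda}=\Schurp_{\lambda}(\rho^{(1)},\rho^{(2)},\ldots)$, is a polynomial in $\mydiff$ and $\nabla$, it follows that $\mydiff^{\lambda}=(\epsilon_{\lambda}\otimes\mathrm{id})\circ\Delta$ for a single functional $\epsilon_{\lambda}$. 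As $\mydiff^{\lambda}$ is homogeneous of degree $-|\lambda|$, its functional $\epsilon_{\lambda}$ is supported on $\{\BSch_u:\ \ell(u)=|\lambda|\}$; for such $u$ only the term $v=\mathrm{id}$ survives in the coproduct, so $\mydiff^{\lambda}\BSch_u=\epsilon_{\lambda}(\BSch_u)$, and Corollary~\ref{cor:operschur} identifies $\epsilon_{\lambda}(\BSch_u)=a_{\lambda,u}$. Evaluating $\mydiff^{\lambda}\BSch_w=\sum_{w=uv,\ \ell(u)+\ell(v)=\ell(w)}\epsilon_{\lambda}(\BSch_u)\BSch_v$ and keeping only the surviving terms $\ell(u)=|\lambda|$ then gives the theorem.

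The main obstacle is the first ingredient: establishing, in usable form, the factorization $\Delta\BSch_w=\sum_{w=uv,\ \ell(u)+\ell(v)=\ell(w)}\BSch_u\otimes\BSch_v$ together with the fact that $\mydiff,\nabla$ are the corresponding left coderivations. Concretely this rests on the length-additive factorization of reduced words (every reduced word of $w$ splits uniquely as a concatenation of reduced words of $u$ and $v$ over factorizations $w=uv$ with $\ell(u)+\ell(v)=\ell(w)$), which is the back stable refinement of the counting identity in Proposition~\ref{pr:equality}; as a consistency check, compatibility with products of Stanley symmetric functions is exactly the coproduct identity $\sum_{\nu}a_{\nu,w}\,c^{\nu}_{\mu,\lambda}=\sum_{w=uv}a_{\lambda,u}a_{\mu,v}$. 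If one prefers to avoid citing \cite{LLS}, the same result can be reached by induction on $|\lambda|$, first proving the single-operator statement $\rho^{(k)}\BSch_w=\sum_{\ell(u)=k,\ \ell(u^{-1}w)=\ell(w)-k}\langle p_k,\F_u\rangle\,\BSch_{u^{-1}w}$ and then assembling $\mydiff^{\lambda}$ via $\Schurp_{\lambda}$; there the delicate point is controlling how $\nabla$, with its content weighting, interacts with the factorization, which is exactly what the coderivation viewpoint packages cleanly. I would emphasize that one cannot instead prove the theorem by matching $\mydiff$- and $\nabla$-images, since (unlike for Schur functions) these operators do not separate elements of $\Lambda\otimes\bQ[x_i,\ i\in\bZ]$ of equal degree.
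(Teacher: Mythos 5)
Your argument is correct and, at its core, coincides with the paper's: both proofs first establish that the coefficient of $\BSch_{u^{-1}w}$ in $\mydiff^{\lambda}\BSch_w$ depends only on $u$ and not on $w$ (you via the coproduct/convolution formalism, the paper via the observation that $\mydiff$ and $\nabla$ act through left multiplication by simple transpositions, so that over a length-additive factorization all reduced words of $u$ contribute uniformly), and then both pin down that coefficient as $a_{\lambda,u}$ by evaluating on a permutation of length $|\lambda|$ and invoking Corollary~\ref{cor:operschur}. Your coderivation packaging is a cleaner, more rigorous rendering of the paper's terse ``acts by simple transposition from the left'' step, and the ``main obstacle'' you flag is in fact immediate: $\Delta$ defined on the linearly independent set $\{\BSch_w\}$ by length-additive factorizations is automatically coassociative, and the identities $\mydiff=(\epsilon^{\mydiff}\otimes\mathrm{id})\circ\Delta$ and $\nabla=(\epsilon^{\nabla}\otimes\mathrm{id})\circ\Delta$ are just restatements of the formulas for these operators given in \S3.
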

\begin{proof}
An operator $\mydiff^{\lambda}$ is an algebraic expression of $\mydiff$ and $\nabla$.
We know that  both $\mydiff$ and $\nabla$ act by simple transposition from the left on a permutation. Hence, if one reduced decomposition of $u$ can act on $w$, then all other decompositions can also act. Therefore the coefficient of $\BSch_{u^{-1}w}$ has two possibilities: is $0$ for some $w$ and is fixed for other permutations. 
We get $$\mydiff^{\lambda}\BSch_w=\sum_{\substack{ \ell(u)=|\lambda|\\ \ell(u^{-1}w)=\ell(w)-|\lambda|}}b_{\lambda,u} \BSch_{u^{-1}w},$$
for some real coefficients $b_{\lambda,u}$.

Substitute a permutation $w$ of length $\ell(w)=|\lambda|.$
By Corollary~\ref{cor:operschur}, $a_{\lambda,w}=b_{\lambda,w},$ which completes the proof.
\end{proof}

We know how the operators $\mydiff^{\lambda}, \lambda\in \Diag$ act on permutations, so we can compute the action of the operators $\rho^{(k)}.$

\begin{theorem}
For any $k\in \bN,$ the differential operator $\rho^{(k)}$ is given by
	$$\rho^{(k)}\BSch_w=\sum_{\substack{  \ell(u^{-1}w)=\ell(w)-k\\(*)}}\pm \BSch_{u^{-1}w},$$
	where the summation (*) is taken over the permutations which admit a reduced word $b_1< b_2<\ldots < b_i>\ldots > b_{k}$ such that any number in the interval $[(min(b_1,b_k),b_i]$  appears at least once. The summands are taken with  signs equal to $(-1)^{k-i}$.
	
\end{theorem}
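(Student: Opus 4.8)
The plan is to reduce the statement to the computation of a single universal coefficient, and then to identify that coefficient combinatorially. First I would argue, exactly as in the proof of the preceding theorem, that the coefficient of $\BSch_{u^{-1}w}$ in $\rho^{(k)}\BSch_w$ depends only on $u$ and not on $w$: since $\mydiff$ and $\nabla$ both act by left multiplication by a simple transposition, any expression in them either can or cannot ``delete'' a fixed reduced word of $u$ from $w$, and when it can, the contribution is the same for every $w$. Thus it suffices to determine a number $c_{k,u}$, defined for every $u$ with $\ell(u)=k$, such that $\rho^{(k)}\BSch_w=\sum_{u}c_{k,u}\BSch_{u^{-1}w}$, the sum being over $u$ with $\ell(u)=k$ and $\ell(u^{-1}w)=\ell(w)-k$.

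Next I would pin down $c_{k,u}$ using the results already proved for the operators $\mydiff^{\lambda}$. Since the $\rho^{(k)}$ commute and $\mydiff^{\lambda}=\Schurp_{\lambda}(\rho^{(1)},\rho^{(2)},\ldots)$, the Murnaghan--Nakayama expansion $p_k=\sum_{\lambda}(-1)^{ht(\lambda)-1}s_{\lambda}$ (the sum over hooks of size $k$) transfers to the operator identity $\rho^{(k)}=\sum_{\lambda}(-1)^{ht(\lambda)-1}\mydiff^{\lambda}$. Applying the preceding theorem $\mydiff^{\lambda}\BSch_w=\sum_u a_{\lambda,u}\BSch_{u^{-1}w}$ and using $a_{\lambda,u}=\langle\F_u,s_{\lambda}\rangle$, I obtain $c_{k,u}=\sum_{\lambda}(-1)^{ht(\lambda)-1}a_{\lambda,u}=\langle\F_u,p_k\rangle$. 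So the whole theorem is equivalent to the purely symmetric-function statement that $\langle\F_u,p_k\rangle$ equals $\pm1$ when $u$ admits a unimodal reduced word with contiguous support and $0$ otherwise, with the sign read off from the peak position.

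For this last step I would run the same commutator induction used for the dual Murnaghan--Nakayama rule in Subsection 4.4, but carried out on reduced words rather than on Young diagrams, via $\rho^{(k+1)}=\tfrac1k[\rho^{(k)},\nabla]$. The base case $k=1$ is $\rho^{(1)}=\mydiff$, whose terms $\BSch_{s_jw}$ are the single-letter (trivially unimodal) words. For the inductive step I would establish, in order: (a) \emph{contiguous support}, by showing that if the transposition deleted by $\nabla$ has index at distance $\ge 2$ from the support of the length-$k$ word deleted by $\rho^{(k)}$, then the two terms $\rho^{(k)}\nabla$ and $\nabla\rho^{(k)}$ commute and cancel, so only words whose letters fill an interval survive; (b) \emph{unimodality}, by checking that any surviving non-unimodal configuration (the reduced-word analogue of a $2\times2$ square, a repeated letter not forced by a single up--down turn) arises from two splittings into a unimodal part plus one extra box carrying equal and opposite weights, and hence cancels; and (c) the \emph{sign}, by computing the contribution of deleting each of the two ends of the strip with $\nabla$ and verifying that $\tfrac1k$ times their sum collapses to $(-1)^{k-i}$, precisely mirroring the identity $\tfrac{b_1(-1)^{ht-1}+b_2(-1)^{ht}}{k}=(-1)^{ht-1}$ from the Schur case.

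The main obstacle is step (c) together with the bookkeeping of which permutation's reduced word satisfies $(*)$: because $\mydiff$ and $\nabla$ act on the left, the word that emerges unimodal is that of $u^{-1}$ (equivalently, the applied transpositions read in reverse), so the peak position $i$ and the sign $(-1)^{k-i}$ must be tracked through this reversal; a small consistency check at $k=2$ (where $\F_{s_1s_2}=s_{(1,1)}$ forces coefficient $-1$ while $\F_{s_2s_1}=s_{(2)}$ forces $+1$) fixes the convention. The genuinely non-formal point is that the induction must show the coefficient never exceeds $\pm1$ in absolute value even when $\F_u$ is not a single Schur function; this is where the cancellation in (b) does the real work, and it is the analogue of the fact that border strips, rather than arbitrary connected skew shapes, are what survive in the Schur computation.
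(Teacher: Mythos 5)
Your reduction coincides with the paper's: the published proof likewise inverts $\mydiff^{\lambda}=\Schurp_{\lambda}(\rho^{(1)},\rho^{(2)},\ldots)$ to write $\rho^{(k)}=\mydiff^{(k)}-\mydiff^{(k-1,1)}+\mydiff^{(k-2,1,1)}-\cdots$ (the alternating sum over hooks), feeds this into the preceding theorem $\mydiff^{\lambda}\BSch_w=\sum_u a_{\lambda,u}\BSch_{u^{-1}w}$, and thereby reduces everything to showing that the universal coefficient $c_{k,u}=\sum_{\text{hooks }\lambda}(-1)^{ht(\lambda)-1}a_{\lambda,u}=\langle \F_u,p_k\rangle$ equals $\pm1$ exactly on the permutations satisfying $(*)$, with the stated sign. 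The two arguments part ways only at this final combinatorial step: the paper dispatches it by appealing to the Edelman--Greene algorithm together with the observation that $\rho^{(k)}$ is supported on ``connected'' permutations, whereas you propose to prove it directly by rerunning the commutator induction of Subsection 4.4 on reduced words (contiguous support from commuting far-apart letters, unimodality from a $2\times2$-square-type cancellation, and the sign from the two-ends computation $\frac{b_1(-1)^{ht-1}+b_2(-1)^{ht}}{k}=(-1)^{ht-1}$). Your route is more self-contained and makes the interval condition and the sign $(-1)^{k-i}$ emerge from the induction rather than from the translation between hook-shaped insertion tableaux and unimodal words, which the paper leaves implicit; the price is that your step (b) --- the cancellation showing the coefficient never exceeds $\pm1$ --- is only sketched, and that is precisely where the real work sits. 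Since the paper itself supplies no more detail there than a citation, your proposal is a legitimate, arguably more complete, rendering of the same proof rather than a different one.
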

\begin{proof}
We can reverse the formula and, hence, we have
$$\rho^{(k)}=\mydiff_{(k)}-\mydiff_{(k-1,1)}+\mydiff_{(k-2,1,1)}-\ldots.$$
Each $\mydiff^{\lambda}$  can be written as a result of an application of Edelman-Greene algorithm, see~\cite{EG}. We also know that $\rho^{(k)}$ is a sum over ``connected'' permutations. This finishes our proof.
\end{proof}
Clearly, we still have the Proposition~\ref{prop:prod}, the proof remains without any changes.
\begin{proposition}
 For $X,Y\in\Lambda \otimes \bQ[x_i,\ i\in \bZ]$ and $\lambda\in\Diag,$ we have
	$$\mydiff^{\lambda}(XY)=\sum_{\mu,\nu}c_{\mu,\nu}^{\lambda}(\mydiff^{\mu}X)(\mydiff^{\nu}Y).$$
\end{proposition}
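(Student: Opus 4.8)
The plan is to mirror the argument of Proposition~\ref{prop:prod} essentially verbatim, the only new input being that the operators $\rho^{(k)}$ are now known to act as commuting derivations on back stable Schubert polynomials. First I would record that, since each $\rho^{(k)}$ satisfies the Leibniz rule (the theorem preceding Corollary~\ref{cor:operschur}), any composition $\rho^{(a_1)}\cdots\rho^{(a_m)}$ applied to a product $XY$ expands, by the iterated Leibniz rule, into a sum of terms in which some of the factors $\rho^{(a_i)}$ act on $X$ and the remaining ones act on $Y$. Since $\mydiff^{\lambda}=\Schurp_{\lambda}(\rho^{(1)},\rho^{(2)},\ldots)$ is a polynomial in these derivations, and since every monomial in the $\rho^{(k)}$ is a linear combination of the operators $\mydiff^{\mu}$, rewriting each distributed factor in this basis yields constants $b_{\mu,\nu}^{\lambda}\in\bQ$, independent of $X$ and $Y$, with
$$\mydiff^{\lambda}(XY)=\sum_{\mu,\nu} b_{\mu,\nu}^{\lambda}\,(\mydiff^{\mu}X)(\mydiff^{\nu}Y).$$

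Second, I would pin down the coefficients by specializing to Grassmannian permutations of descent $0$, whose back stable Schubert polynomials are exactly the Schur functions $s_\alpha\in\Lambda$. Taking $X=s_\alpha$ and $Y=s_\beta$ with $|\alpha|+|\beta|=|\lambda|$, the output has degree $0$, so only the terms with $|\mu|=|\alpha|$ and $|\nu|=|\beta|$ survive; for these $\mydiff^{\mu}s_\alpha=\delta_{\mu,\alpha}$ and $\mydiff^{\nu}s_\beta=\delta_{\nu,\beta}$, so the right-hand side collapses to $b_{\alpha,\beta}^{\lambda}$. On the other hand the left-hand side equals $\mydiff^{\lambda}(s_\alpha s_\beta)=\mydiff^{\lambda}(\sum_\gamma c_{\alpha,\beta}^\gamma s_\gamma)=c_{\alpha,\beta}^{\lambda}$. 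This forces $b_{\alpha,\beta}^{\lambda}=c_{\alpha,\beta}^{\lambda}$ for all admissible pairs, and since every relevant $(\mu,\nu)$ arises in this way, the claimed identity follows.

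The only genuinely new point to justify — and the step I expect to require the most care — is the claim that the expansion coefficients $b_{\mu,\nu}^{\lambda}$ are truly independent of the ambient ring, so that the values computed on Schur functions inside $\Lambda$ transfer to all of $\Lambda\otimes\bQ[x_i,\ i\in\bZ]$. I would settle this by observing that distributing the derivations $\rho^{(k)}$ across a product is a purely formal consequence of the Leibniz rule and makes no reference to the particular elements being multiplied; hence the same formal expansion, with the same coefficients, holds in any commutative algebra on which the $\rho^{(k)}$ act as commuting derivations. The theorem preceding Corollary~\ref{cor:operschur} guarantees exactly this for back stable Schubert polynomials, so no recomputation of the $b_{\mu,\nu}^{\lambda}$ is needed and the proof of Proposition~\ref{prop:prod} carries over unchanged.
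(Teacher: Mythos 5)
Your proposal is correct and follows essentially the same route as the paper, which simply observes that the proof of Proposition~\ref{prop:prod} carries over verbatim: expand $\mydiff^{\lambda}(XY)$ via the Leibniz rule into universal coefficients $b_{\mu,\nu}^{\lambda}$ and then identify them with $c_{\mu,\nu}^{\lambda}$ by evaluating on products of Schur functions. Your explicit remark that the $b_{\mu,\nu}^{\lambda}$ are formal consequences of the commuting-derivation structure, and hence transfer unchanged to $\Lambda\otimes\bQ[x_i,\ i\in\bZ]$, is exactly the point the paper leaves implicit.
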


\nextsection
\section{Some remarks about the product of a Schubert polynomial and a Schur polynomial}
\label{sec:times}

In this section we deal with the product of a Schur polynomial and a back stable Schubert polynomial (Schubert times Schur). We assume that Schur polynomials correspond to Grassmannian permutations of descent $0$. We will simply write $\Schur_{\lambda}\times \BSch_w$ and $c_{\lambda,w}^v$.

Here we present a procedure for multiplication of Schubert times Schur. It is well known that we can multiply $s_{\lambda}\times \BSch_w$ using divided difference operators and Stanley symmetric function, however our algorithm is a bit simpler (we do not use the non-trivial Edelman-Greene algorithm). We will multiply recursively by $|\lambda|+\ell(w)$. Denote by $Gr_0$ the set of Grassmannian permutations of descent $0$; then 
 $$\Schur_{\lambda} \BSch_w= \sum_{v\in Gr_0} c_{\lambda,w}^v\BSch_{v}+\sum_{v\notin Gr_0} c_{\lambda,w}^v\BSch_{v}.$$
 Equivalently by Lemma~\ref{lemma:key},
 $$(\mydiff \Schur_{\lambda}) \BSch_w + \Schur_{\lambda}(\mydiff \BSch_w)-\sum_{v\notin Gr_0} c_{\lambda,w}^v \mydiff \BSch_{v}=\mydiff( \sum_{v\in Gr_0} c_{\lambda,w}^v\BSch_{v})$$
  and 
 $$(\nabla \Schur_{\lambda}) \BSch_w + \Schur_{\lambda}(\nabla \BSch_w)-\sum_{v\notin Gr_0} c_{\lambda,w}^v \nabla \BSch_{v}=\nabla( \sum_{v\in Gr_0} c_{\lambda,w}^v\BSch_{v}).$$

By induction we already know that $\sum_{v\notin Gr_0} c_{\lambda,w}^v\BSch_{v},$ because $c_{\lambda,w}^v=c_{\lambda,ws_j}^{vs_j}$ if $j$ is a common descent of $w$ and $v$ (all descents of $w$ except $0$ should be descents of $v$). Therefore we can compute $\sum_{v\notin Gr_0} c_{\lambda,w}^v \mydiff \BSch_{v}$ and $\sum_{v\notin Gr_0} c_{\lambda,w}^v \nabla \BSch_{v}$. Furthermore by induction, we have $(\mydiff \Schur_{\lambda}) \BSch_w + \Schur_{\lambda}(\mydiff \BSch_w)$ and $(\nabla \Schur_{\lambda}) \BSch_w + \Schur_{\lambda}(\nabla \BSch_w)$. 
Hence, we have $\mydiff( \sum_{v\in Gr_0} c_{\lambda,w}^v\BSch_{v})$ and $\nabla( \sum_{v\in Gr_0} c_{\lambda,w}^v\BSch_{v})$. 
By Lemma~\ref{lemma:key} we know that there is a unique $X$ such that $\mydiff(X)=\mydiff( \sum_{v\in Gr_0} c_{\lambda,w}^v\BSch_{v}) \ \textrm{and}\ \nabla(X)=\nabla( \sum_{v\in Gr_0} c_{\lambda,w}^v\BSch_{v})$. 
To optimize the algorithm we can use the fact that $X$ has only positive coefficients; they can be computed using the  algorithm presented in the proof of Proposition~\ref{alg:prop}. 

\begin{remark}
Since we act on permutations by $\partial_i, i\neq 0$, $\mydiff$, and $\nabla$ in our procedure, we consider only permutations, which are smaller in double (right and left) weak Bruhat order.	
\end{remark}

For any conjectured expression for the product of Schur times Schubert, it is enough to check that it agrees with the operators:

\begin{itemize}
 \item{\bf Criterion 1:}\  $\partial_i,i\neq 0,$ $\mydiff,$ and $\nabla$.
 \item{\bf Criterion 2:}\  $\partial_i,i\neq 0$ and $\mydiff^{(i)},i\in \bN$.
 \item{\bf Criterion 3:}\  $\partial_i,i\neq 0$ and $\rho^{(i)},i\in \bN$.
\end{itemize}

\nextsection 
\subsection*{Acknowledgement}  The author is very grateful to Anatol Kirillov for introducing him to the topic. He also would like to thank very much Alex Postnikov, Michael Shapiro, Alejandro Morales, and Laura Colmenarejo for comments and helpful advice while work was in progress. Finally, the author is also grateful to Vasu Tewari for the important references.

   
\end{document}